\newtheorem{thm}{Theorem}[section]
\newtheorem{cor}[thm]{Corollary}
\newtheorem{lem}[thm]{Lemma}
\theoremstyle{definition}
\theoremstyle{remark}
\newtheorem{remarks}[thm]{Remarks}
\newcommand{\K}{{\mathbb K}}
\newcommand{\R}{{\mathbb R}}
\newcommand{\C}{{\mathbb C}}
\newcommand{\sph}{{\mathbb S}}
\newcommand{\gtm}{{\mathfrak m}}     
\newcommand{\gta}{{\mathfrak a}}
\newcommand{\an}{{\EuScript O}}
\newcommand{\mer}{{\EuScript M}}
\newcommand{\Ii}{{\EuScript I}}
\newcommand{\Zz}{{\EuScript Z}}
\newcommand{\Ss}{{\EuScript S}} 
\newcommand{\Jj}{{\EuScript J}}
\newcommand{\Ff}{{\EuScript F}}
\newcommand{\Gg}{{\EuScript G}}
\newcommand{\Bb}{{\EuScript B}}
\newcommand{\Tt}{{\EuScript T}}
\newcommand{\Reg}{\operatorname{Reg}}
\newcommand{\Sing}{\operatorname{Sing}}
\newcommand{\im}{\operatorname{im}}           
\newcommand{\Int}{\operatorname{Int}}          
\newcommand{\dist}{\operatorname{dist}}
\newcommand{\supp}{\operatorname{supp}}
\newcommand{\cl}{\operatorname{Cl}}
\newcommand{\id}{\operatorname{id}}
\newcommand{\x}{{\tt x}}
\newcommand{\y}{{\tt y}}     
\newcommand{\z}{{\tt z}}     
\newcommand{\s}{{\tt s}}
\newcommand{\Sos}{{\textstyle\sum}} 
\newcommand{\veps}{\varepsilon}
\newcommand{\ol}{\overline}
\begin{document}

\title[Positive semidefinite analytic functions on real analytic surfaces]{Positive semidefinite analytic functions\\ on real analytic surfaces}

\author{Jos\'e F. Fernando}
\address{Departamento de \'Algebra, Geometr\'\i a y Topolog\'\i a, Facultad de Ciencias Matem\'aticas, Universidad Complutense de Madrid, 28040 MADRID (SPAIN)}
\curraddr{}
\email{josefer@mat.ucm.es}
\thanks{Author supported by Spanish STRANO MTM2017-82105-P and Grupos UCM 910444}

\subjclass[2010]{Primary: 14P99, 11E25, 32S05; Secondary: 32B10}
\keywords{Hilbert's 17th Problem, positive semidefinite analytic function, sum of squares of meromorphic functions, real analytic surface, complexification, normalization, singular points, non-coherence.}
\date{14/06/2021}

\begin{abstract} 
Let $X\subset\R^n$ be a (global) real analytic surface. Then every positive semidefinite meromorphic function on $X$ is a sum of $10$ squares of meromorphic functions on $X$. As a consequence, we provide a real Nullstellensatz for (global) real analytic surfaces.
\end{abstract}
\maketitle

\section{Introduction}\label{s1}

The famous Hilbert's 17th Problem asks whether positive semidefinite functions can be represented as sums of squares and in that case how many squares are needed. The two parts of this question are distinguished as the \em qualitative \em and the \em quantitative \em aspects of the problem. Recall that the \em Pythagoras number \em of a ring $A$ is the smallest integer $p(A)\ge1$ such that every sum of squares of $A$ is a sum of $p$ squares or infinity if such an integer does not exist.

The specialists have studied both problems for different types of functions: polynomial \cite{ch80,sch3}, regular \cite{sch1,sch4}, rational \cite{ea,cep,pf} and \cite[\S6]{bcr}, Nash \cite[\S8.5]{bcr}, regulous \cite{fmq}, smooth \cite{by,bbcp,gl}, analytic, meromorphic, \ldots and found full or partial solutions in many cases. Global analytic and meromorphic functions remain as the most defying types. 

\subsection{Analytic functions.}
Let us recall the state of the art in the local and global analytic settings. The qualitative and quantitative problems have been widely approached for analytic set germs in both the analytic and meromorphic contexts. Denote the ring of convergent power series with $\R\{\x\}:=\R\{\x_1,\ldots,\x_n\}$. Given an analytic set germ $X$ (at the origin), denote the ideal of all convergent power series that vanish identically on $X$ with $\Jj(X)$. The analytic ring of $X$ is $\an(X):=\R\{\x\}/\Jj(X)$.

In \cite{fe2,frs1} we proved that if $X$ is an analytic set germ of dimension $\geq3$, both the qualitative and quantitative questions have negative answers, that is, there are analytic function germs on $X$ that are positive semidefinite but they are not sums of squares of analytic function germs on $X$ and $p(\an(X))=+\infty$. Thus, one is only concerned about dimensions $1$ and $2$. In \cite[Thm.3.9]{sch2}, Scheiderer characterized the analytic curve germs $X$ with the property that every positive semidefinite analytic function is a sum of squares (and in fact one square): \em $X$ is a (finite) union of non-singular independent branches\em, or equivalently, \em the ring of analytic function germs on $X$ is
$$
\an(X)\cong\R\{\x_1,\ldots,\x_n\}/(\x_i\x_j:\ 1\leq i<j\leq n).
$$\em

In addition, $1\leq p(\an(X))\leq{\rm mult}(X)$ for each analytic curve germ $X$, where ${\rm mult}(X)$ is the multiplicity of $X$ (see \cite{fq,q} and \cite{or} for further results). In \cite{fe3,fe4,fe5,fe7,fr1,rz99} we provide the complete list of the analytic surface germs $X$ of embedding dimension $\leq3$ such that every positive semidefinite analytic function germ on $X$ is a sum of squares of analytic function germs on $X$ (in fact only $2$ squares are needed). We refer the reader to \cite{fe17} when dealing with more general residue fields (for instance, the rational numbers). In addition, we exhibit in \cite{fe4} families of analytic surface germs $X$ of arbitrary embedding dimension such that every positive semidefinite analytic function germ on $X$ is a sum of squares of analytic function germs on $X$ (in fact only $2$ squares are needed). Concerning Pythagoras numbers, we proved in \cite{fe1} that $2\leq p(\an(X))\leq2{\rm mult}(X)$ for each analytic surface germ $X$, where ${\rm mult}(X)$ is the multiplicity of $X$. We refer the reader to \cite{frs2} when dealing with more general residue fields (for instance, the rational numbers). The relationship between the Pythagoras number of the ring of analytic functions germs on an analytic set germ $X$ and the Pythagoras numbers of the rings of analytic function germs on analytic curve germs $Y$ contained in $X$ is studied in \cite{fr2}.

Let us now approach the global analytic setting. By Whitney's immersion theorem \cite[Thm.2.15.7]{n4} a real analytic manifold $M$ of dimension $d$ can be embedded in $\R^{2d+1}$. As open subsets of $\R^n$ are real analytic manifolds, when dealing with global analytic subsets of an open subset $\Omega$ of $\R^n$ (that we call $C$-analytic subsets of $\Omega$ in honor of Cartan \cite{c}), we may always assume (by Whitney's immersion theorem and Cartan's Theorem B) that they are $C$-analytic subsets of certain $\R^m$. Thus, for the sake of simplicity, we will only analyze what happens with $C$-analytic subsets of $\R^n$ (and this includes `a fortiori' the general case of $C$-analytic subsets of real analytic manifolds via analytic tubular neighborhoods, Whitney's immersion theorem and Cartan's Theorem B). We will also deal with $C$-analytic spaces, which are introduced in \S\ref{complexification}. 

Denote the sheaf of germs of analytic functions on $\R^n$ with $\an_{\R^n}$ and its ring of global sections with $\an(\R^n):=H^0(\R^n,\an_{\R^n})$, that is, the ring of (global) analytic functions on $\R^n$. If $X\subset\R^n$ is a $C$-analytic set, that is, it is the zero-set of a real analytic function $f\in\an(\R^n)$, we associate the ideal $\Jj(X)$ of analytic functions on $\R^n$ that vanish identically on $X$ to $X$. We consider the sheaf of ideals $\Jj_X:=\Jj(X)\an_{\R^n}$ and the sheaf of (quotient) rings $\an_X:=\an_{\R^n}/\Jj_X$, whose ring of global sections $\an(X):=H^0(X,\an_X)=\an(X)/\Jj(X)$ is the ring of (global) analytic functions on $X$. Recall that $X$ is \em coherent \em if for each $x\in X$ the ideal $\Jj_{X,x}$ coincides with the ideal of analytic function germs on $\R^n$ that vanishes identically on the analytic set germ $X_x$.

Again the dimension $3$ determines a substantial change of behavior. If $X$ is a $C$-analytic set of dimension $\geq 3$, both the qualitative and quantitative questions have negative answers, that is, there are (global) analytic functions on $X$ that are positive semidefinite but they are not sums of squares of analytic functions on $X$ and $p(\an(X))=+\infty$ (see \cite{fe2,fe7}). Thus, again one is only concerned about dimensions $1$ and $2$. The one dimensional case is fully approached in \cite{abfr2}. Every positive semidefinite analytic function on an analytic curve $X$ is a sum of squares if and only if for each $x\in X$ the analytic curve germ $X_x$ has the corresponding local property (and in fact only $2$ squares are needed). Concerning Pythagoras numbers, if $X$ is an analytic curve, then
$$
p(\an(X))=\sup\{p(\an(X_x)):\ x\in X\}+\veps
$$ 
where $\veps$ is either $0$ or $1$. The $2$ dimensional case is more delicate. If $X$ is a non-singular analytic surface, then every positive semidefinite analytic function on $X$ is a sum of squares of analytic functions on $X$ (and at most $3$ squares are needed, \cite{br,jw1}). In fact, if $X$ is in addition connected, Jaworski \cite[Cor.2]{jw1} proved
$$
p(\an(X))=\begin{cases}
2&\text{if $X$ is non-compact,}\\
3&\text{if $X$ is compact.}
\end{cases}
$$
Analytic surfaces with singular points are approached in \cite{fe7}. In particular, we prove that if $X$ is an analytic surface such that every positive semidefinite analytic function on $X$ is a sum of squares of analytic functions on $X$, then $X$ is coherent and for each $x\in X$ the analytic surface germ $X_x$ has the corresponding local property. If the local embedding dimension of $X$ is $\leq3$, we prove that the converse implication holds (and in fact only $6$ squares are needed). As far as we know, the quantitative problem in the $C$-analytic $2$-dimensional case has not been studied in detail if $X$ has singular points and remains as an open problem.

\subsection{Meromorphic functions.}
If $X$ is an analytic set germ at the origin of $\R^n$, we denote the total ring of fractions of the ring $\an(X)$ of analytic function germs on $X$ with $\mer(X)$. Risler and Ruiz provided a full positive answer to the qualitative problem and proved in \cite{ri,rz00} that every positive semidefinite meromorphic function germ on an analytic set germ $X$ is a (finite) sum of squares of meromorphic function germs on $X$. The quantitative problem in the local setting has been completely solved by Hu \cite{hu} and Benoist \cite{b}. If $n=1$, it is straightforward that $p(\mer(\R_0))=1$, whereas in \cite{ch80} it was shown $p(\mer(\R^2_0))=2$. Jaworski proved in \cite{jw3} that $4\leq p(\mer(\R^3_0))\leq8$, whereas Hu showed in \cite[\S5]{hu} that $p(\mer(\R^n_0))\leq 2^n$ and $p(\mer(\R^3_0))=4$. Benoist improved Hu's bound obtaining that $p(\mer(\R^n_0))\leq 2^{n-1}$ for each $n\geq1$. Let us sketch how $p(\mer(X))\leq 2^d$ for each analytic set germ $X$ of dimension $d$. By R\"uckert's parameterization theorem \cite[Prop.3.4]{rz99} $X$ embeds in $\R^{d+1}_0$ as an analytic hypersurface germ through a birational model. Then, each sum of squares of meromorphic function germs on $X$ is the restriction of one on $\R^{d+1}_0$, which is a sum of $2^d$ squares of meromorphic function germs by \cite{b}. This sum of $2^d$ squares restricts well to $X$, because the analytic equation of $X$ in $\R^{d+1}_0$ is real, so it can be factored out from the poles of the $2^d$ addends. 

If $X$ is an analytic curve germ and $\widehat{X}$ is its normalization, it holds $\mer(X)\cong\mer(\widehat{X})\cong\bigoplus_{i=1}^r\mer(\R_0)$ (where $r$ is the number of branches of $X$), so $p(\mer(X))=1$. In \cite{abfr1} we proved that $p(\mer(X))\leq 4$ for each analytic surface germ $X$ and control in addition the zero-set of poles of the representations as sums of squares. 

\begin{thm}[{\cite[Thm.1.3]{abfr1}}]\label{localcase}
Let $X$ be an analytic surface germ and $f\in\an(X)$ a positive semidefinite analytic
function germ. Then there are analytic function germs $h_0,h_1,h_2,h_3,h_4\in\an(X)$ such that $h_0^2f=h_1^2+h_2^2+h_3^2+h_4^2$ and $h_0$ is a sum of squares in $\an(X)$ with $\{h_0=0\}\subset\{f=0\}$.
\end{thm}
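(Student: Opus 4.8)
The plan is to pass to a non-singular model of $X$, where a positive semidefinite analytic function becomes locally a square, invoke the known quantitative results in the non-singular case, and then descend the representation to $X$ while keeping strict control of the poles. Fix a small representative of the surface germ, still written $X$, with singular locus $C:=\Sing(X)$, which in general is a curve. Let $\pi\colon\widehat X\to X$ be the normalization and $\sigma\colon\widetilde X\to\widehat X$ a resolution of singularities. Since $\widehat X$ is normal its singular locus is finite, so $\rho:=\pi\circ\sigma\colon\widetilde X\to X$ is proper, birational and an isomorphism over $X\setminus C$, with exceptional set $E:=\rho^{-1}(C)$ a real analytic curve. Put $\widetilde f:=f\circ\rho$, a positive semidefinite analytic function on the non-singular real analytic surface $\widetilde X$, and after finitely many further blow-ups assume that $\{\widetilde f=0\}$ is a normal crossings divisor.

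On $\widetilde X$ the function $\widetilde f$ is locally a square: positive semidefiniteness forces the order of $\widetilde f$ along every irreducible component of $\{\widetilde f=0\}$ to be even, so locally $\widetilde f=\delta^2u$ with $u$ a positive analytic unit, and $u=\bigl(\exp(\tfrac12\log u)\bigr)^2$ is a square. As $\widetilde X$ is a non-compact non-singular real analytic surface — splitting off its connected components if necessary — Jaworski's theorem in the non-compact non-singular case yields $h_1,h_2\in\an(\widetilde X)$ with $\widetilde f=h_1^2+h_2^2$. (Alternatively, a R\"uckert hypersurface model of $X$ feeds into the bound $p(\mer(\R^3_0))=4$, and this is the ingredient that ultimately fixes the number of squares in the statement to $4$.)

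To descend this identity, observe that $\sigma_*\an_{\widetilde X}=\an_{\widehat X}$ since $\widehat X$ is normal and $\sigma$ proper birational, and $\pi$ is finite, so $\rho_*\an_{\widetilde X}$ is a coherent $\an_X$-module coinciding with $\an_X$ over $X\setminus C$. Hence $\rho_*\an_{\widetilde X}/\an_X$ is supported on $C$, so $\Jj(C)^N$ annihilates it for some $N$. Choose $g\in\an(X)$ a sum of squares with $\{g=0\}=C$ (say a sum of squares of generators of $\Jj(C)$); then $g^Nh_1,g^Nh_2\in\an(X)$, and because $\rho$ is an isomorphism over $X\setminus C$ the identity
\[
(g^N)^2 f=(g^Nh_1)^2+(g^Nh_2)^2
\]
holds on the dense open set $X\setminus C$, hence on $X$. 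This already gives a representation, with denominator $g^N$ a sum of squares — but with $\{g^N=0\}=C$, which need not be contained in $\{f=0\}$.

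The hard part, and what I expect to be the technical heart of the theorem, is to shrink the denominator so that it vanishes only on $C\cap\{f=0\}$. Wherever $f$ does not vanish along $C$, it is a positive unit on each local branch of $X$, hence a square there, and the only obstruction to $g^Nh_i$ being regular near such points comes from the several branches of $X$ (equivalently, the several points of $\widetilde X$ over a point of $C$) failing to glue — that is, precisely from the non-coherence of $X$. One must therefore replace $(h_1,h_2)$ by new data whose ambiguity is confined to $\{f=0\}$, which is where the monodromy of $h_1+\sqrt{-1}\,h_2$ around the components of $C$ has to be analysed and where the extra squares, bringing the total up to $4$, are spent. The reducible case is treated component by component through $\mer(X)\cong\prod_i\mer(X_i)$ together with a compatibility analysis along the intersection curves $X_i\cap X_j$, and is absorbed into this gluing step. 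My expectation is that this pole-confinement across $\Sing(X)$, rather than the non-singular input, is the real obstacle.
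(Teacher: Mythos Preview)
This theorem is not proved in the present paper: it is quoted from \cite[Thm.1.3]{abfr1} and invoked as a black box in Step~0 of the proof of Theorem~\ref{discretezero-set}. There is therefore no proof here to compare your sketch against.

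On the substance of your outline, the gap is exactly where you locate it. You obtain $(g^N)^2f=(g^Nh_1)^2+(g^Nh_2)^2$ with $\{g=0\}=\Sing(X)$, but the conclusion demands $\{h_0=0\}\subset\{f=0\}$, and you do not carry out this pole-confinement --- you only describe what would have to happen. One terminological slip: you blame the obstruction on ``non-coherence of $X$'', but $X$ is a \emph{germ}, and coherence is a global notion that is vacuous at a single point. The phenomenon you are pointing at --- square roots of $f$ on the separate sheets of $\widetilde X$ over a point of $\Sing(X)$ failing to match --- is real, but it is a branch-gluing problem, not a coherence one. Resolving it, and doing so with only four squares and with $h_0$ itself a sum of squares, is precisely the content of the cited result; your sketch stops short of this.

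A secondary issue: the route through resolution plus Jaworski yields two squares upstairs with denominator supported on all of $\Sing(X)$, and it is not clear how your ``extra squares spent in gluing'' would simultaneously confine the poles to $\{f=0\}$ and keep the total at four. The parenthetical R\"uckert-hypersurface alternative you mention is closer in spirit to how such bounds are typically obtained, but it too requires a separate argument to control the denominator, which you do not supply.
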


In the global meromorphic setting we have less information. Recall that the ring of meromorphic functions $\mer(X)$ on a $C$-analytic set $X$ is the ring of global sections of the sheaf $\mer_X$, whose stalks $\mer_{X,x}$ are the total rings of fractions of $\an_{X,x}$ for each $x\in X$ (see \cite[VIII.B]{gr2}). Each $C$-analytic set $X\subset\R^n$ has a \em complexification\em, that is, a complex analytic subset $Y$ of an open neighborhood $\Omega\subset\C^n$ of $\R^n$ such that $Y\cap\R^n=X$ and for each $x\in X$ the ideal $\Jj_{X,x}\otimes\C$ coincides with the ideal $\Jj(Y_x)$ of $\an_{\C^n,x}$ of holomorphic function germs at $x$ that vanish identically on the analytic germ $Y_x$.

As $X\subset\R^n$ is a $C$-analytic set, it has by \cite{c,wb} a system of invariant (under conjugation) open Stein neighborhoods in a {\em complexification} $Y\subset\C^n$ of $X$. By \cite[VIII.B.Cor.10]{gr2} we conclude that $\mer(X)$ is the total ring of fractions of $\an(X)$.

The qualitative and the quantitative problems are still open for each $C$-analytic set $X$ of dimension $d\geq3$. There exist only some partial results for $\R^n$ if $n\geq3$. For instance, Bochnak-Kucharz-Shiota proved in \cite{bks} that every analytic function on $\R^n$ whose zero-set is a discrete set is a sum of $2^n+n+1$ squares of meromorphic functions on $\R^n$. In addition, Ruiz showed in \cite{rz1} that if the zero-set of a positive semidefinite analytic function $f$ on a $C$-analytic set $X$ is compact, then $f$ is a sum of squares of meromorphic functions on $X$. Unfortunately, there is no bound concerning the number of squares involved. In \cite{jw2} Jaworski showed that if $f$ is a positive semidefinite analytic function on $X$ whose zero-set is discrete outside a compact set, then $f$ is a sum of squares of meromorphic functions on $X$.

In \cite{abfr3} we propose to involve also infinite (convergent) sums of squares of analytic functions. We showed that if the connected components of the zero-set of a positive semidefinite analytic function $f$ on a real analytic manifold $M$ are compact, then $f$ is an infinite sum of squares of meromorphic functions on $M$. If there exists a bound for the number of squares needed to represent $f$ around the (compact) connected components of $\{f=0\}$, then the previous sum of squares of meromorphic functions is finite. This requires to control the zero-set of the set of poles of the representation of $f$ as a sum of squares of meromorphic functions around the connected components of $\{f=0\}$ (see \cite[Thm.1.5]{abfr3}). In addition, we proved in \cite[Prop.1.11]{abfr3} that a positive answer to the qualitative problem for $\mer(\R^n)$ implies the finiteness of the Pythagoras number of $\mer(\R^n)$. In \cite{abf2,abf6,fe8} we showed that the obstruction for a positive semidefinite analytic function $f$ on $\R^n$ to be a (maybe infinite) sum of squares of meromorphic functions on $\R^n$ concentrates around the zero-sets of the invariant (under complex conjugation) irreducible factors of $f$ (of odd multiplicity) whose zero-sets have dimension between $1$ and $n-2$. 

Thus, if $n\leq2$, such obstructions do not appear and let us comment what is known for $C$-analytic curves and surfaces. By \cite[Thm.3.9]{as} we know that if $Y$ is a complexification of $X$ and $(\widehat{Y},\pi)$ is the normalization of $Y$, then $\mer(Y)\cong\mer(\widehat{Y})$. If in addition $X$ is coherent, then $\widehat{X}:=\pi^{-1}(X)$ is the real part space of $\widehat{Y}$ (so it is a normal real analytic space) and $\mer(X)\cong\mer(\widehat{X})$. Thus, if $X$ is coherent, we may assume that it is normal in order to approach the qualitative and the quantitative questions. However, as the bounded local embedding dimension of the normalization is not guaranteed in the $2$-dimensional case, we have to deal with normal $C$-analytic spaces of dimension $2$ instead of normal $C$-analytic surfaces of some $\R^n$. 

If $X$ is a normal analytic curve, its connected components are analytically diffeomorphic to lines and circumferences. If $\{X_k\}_k$ is the collection of the connected components of $X$, then
$$
\an(X)\cong\prod_k\an(X_k)\quad\text{and}\quad\mer(X)\cong\prod_k\mer(X_k).
$$
Thus, every positive semidefinite analytic function of $X$ is a sum of squares of analytic functions on $X$ (and $2$ squares are enough). In fact, $p(\mer(X))=\max_k\{p(\mer(X_k))\}\leq2$ and by \cite[Cor.1]{jw1}
$$
p(\mer(X_k))=\begin{cases}
1&\text{if $X_k$ is unbounded,}\\
2&\text{if $X_k$ is compact.}
\end{cases}
$$

If $X$ is a normal $C$-analytic surface, we have the following result (see also \cite{adr}). 
\begin{thm}[{\cite[Thm.1.4]{abfr1}}]
Let $X$ be a normal $C$-analytic surface and let $f:X\to\R$ be a positive semidefinite analytic function. Then, there exist analytic functions $g,f_1,f_2,f_3,f_4,f_5\in\an(X)$ such that $g^2f=f_1^2+f_2^2+f_3^2+f_4^2+f_5^2$ and $g$ is a sum of squares whose zero-set $\{g=0\}$ is a discrete subset of $\{f=0\}$.
\end{thm}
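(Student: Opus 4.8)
The plan is to localize the problem at the discrete singular locus of $X$, where Theorem~\ref{localcase} applies, to dispose of the smooth part by the already known non-singular case, and then to glue the two pieces together at the expense of exactly one extra square.

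\emph{Reductions and the smooth part.} Working on each connected component of $X$ separately we may assume $X$ connected and $f\not\equiv 0$. Since $X$ is normal it is $R_1$, so $S:=\Sing(X)$ has codimension $\geq 2$ in $X$, that is, $S$ is discrete; moreover the local rings of $X$ are normal domains, so $X$ is locally, hence globally, irreducible, and therefore the proper analytic subset $\{f=0\}$ has dimension $\leq 1$. If $x\in\{f=0\}\setminus S$, then $X$ is regular at $x$ and $\{f=0\}$ is a divisor through $x$; as $f$ is positive semidefinite, its germ at $x$ is the product of \emph{even} powers of the irreducible local equations of $\{f=0\}$ by a positive unit, so, extracting the square root of the unit, $f$ is near $x$ a \emph{single} square of an analytic germ, with no denominator; at points $x\notin\{f=0\}$, $f$ is a positive analytic germ and is again locally one square. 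Consequently, on the non-singular real analytic surface $V:=X\setminus S$ the function $f|_V$ is positive semidefinite, so by the non-singular case \cite{br,jw1} there are $\psi_1,\psi_2,\psi_3\in\an(V)$ with $f|_V=\psi_1^2+\psi_2^2+\psi_3^2$.

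\emph{Local data along $S$ and the globalization.} Pick pairwise disjoint open neighbourhoods $U_x$ of the points $x\in S$, small enough to represent the normal surface germs $X_x$, and apply Theorem~\ref{localcase} on each: one obtains $h_0,\dots,h_4\in\an(U_x)$ with $h_0^2f=h_1^2+h_2^2+h_3^2+h_4^2$ on $U_x$, $h_0$ a sum of squares and $\{h_0=0\}\subset\{f=0\}\cap U_x$. These assemble, over $U:=\bigsqcup_{x\in S}U_x$, into a sum of squares $g_U\in\an(U)$ and $\varphi_1,\dots,\varphi_4\in\an(U)$ with $g_U^2 f|_U=\varphi_1^2+\varphi_2^2+\varphi_3^2+\varphi_4^2$. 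Since $X\subset\R^n$ is $C$-analytic, a complexification $Y\subset\C^n$ of $X$ carries a system of invariant open Stein neighbourhoods, so that Cartan's Theorems~A and~B — together with the Oka--Grauert principle for the pertinent orthogonal groups — are available for $\an_X$ and $\mer_X$; this provides the Mayer--Vietoris patching for sums of squares of meromorphic functions with one extra square. Applying it to the open cover $X=U\cup V$, with the four-square datum $(g_U,\varphi_i)$ on $U$ and the three-square datum $(1,\psi_i)$ on $V$, yields a sum of squares $g\in\an(X)$ and $f_1,\dots,f_5\in\an(X)$ with $g^2f=f_1^2+\cdots+f_5^2$; here $\{g=0\}$ is contained in $\{g_U=0\}$ up to the zero-set of an invertible sum of squares, hence in $\{f=0\}$.

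\emph{Where the work is.} Two points require genuine effort. First, the multiplier produced by the patching must have \emph{discrete} zero-set, whereas Theorem~\ref{localcase} only guarantees $\{h_0=0\}\subset\{f=0\}$, which near a singular point $x$ may be one-dimensional. One disposes of the one-dimensional components as follows: along $\{f=0\}\cap(U_x\setminus\{x\})\subset\Reg(X)$ the function $f$ is, by the first paragraph, already a single square, hence needs no denominator there, so one may trade the local representation of Theorem~\ref{localcase} for one whose denominator vanishes only at $x$; establishing this sharpening of Theorem~\ref{localcase} for \emph{normal} surface germs, and then propagating it through the patching so that $\{g=0\}$ stays inside the discrete set $S$, is the main technical core. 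Secondly, the Mayer--Vietoris patching must be organized so that exactly one extra square is spent — the very $+1$ that separates the compact from the non-compact case in Jaworski's non-singular theorem — so that the $4$ local squares become $5$ global ones. Finally, let us point out why one cannot simply resolve the singularities of $X$ and quote the non-singular case: the exceptional divisor over a real singular point of $X$ may have no real points, so the resolution need not be surjective over $X$ along the reals and a sum-of-squares representation upstairs need not descend; this is exactly why an honest meromorphic denominator, supplied locally by Theorem~\ref{localcase}, cannot be avoided.
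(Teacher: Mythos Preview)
Your proposal is an outline, not a proof: you yourself flag the two hardest steps—the precise patching mechanism and the discreteness sharpening of the local denominator—as ``the main technical core'' and then do not carry them out.

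Concretely, the sentence ``the Oka--Grauert principle for the pertinent orthogonal groups \ldots\ provides the Mayer--Vietoris patching for sums of squares of meromorphic functions with one extra square'' is not an argument. Oka--Grauert compares holomorphic and continuous principal bundles on Stein spaces; you are in the real-analytic category, and you must match two representations of different shape (denominator $g_U$ with four numerators on $U$, trivial denominator with three numerators on $V$). Even after stabilising to a common length you would need an analytic section of an orthogonal bundle on $U\cap V$ with a prescribed extension behaviour, and you have not produced it. Nor do you address the growth of the $\psi_i\in\an(V)$ near $S$: they need not extend across the singular points, so any genuine gluing has to absorb this into the denominator, which your outline does not organise. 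The discreteness issue is likewise only stated, not solved: you note that Theorem~\ref{localcase} allows $\{h_0=0\}$ to be a curve through the singular point and propose to sharpen this for normal germs, but the sharpening is never proved.

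For comparison, note first that the present paper does not prove this theorem; it is quoted from \cite{abfr1}. However, the machinery of Section~\ref{s3}, specialised to the normal case (so $\widehat X=X$, $C=\varnothing$, $B(X)=\varnothing$), does yield a proof, and it is organised quite differently from your sketch. There is no open cover $\{U,V\}$ and no patching. Instead one (i) globally eliminates the one-dimensional part of $\{f=0\}$ via Lemma~\ref{dim1}, reducing to a positive semidefinite function whose zero set is discrete; (ii) applies Theorem~\ref{localcase} at each of these isolated zeros and globalises the local data through Cartan's Theorem~B, as in Lemma~\ref{extension} and Steps~1--2 of the proof of Theorem~\ref{discretezero-set}; and (iii) obtains the fifth square \emph{algebraically}, as the correction term $\mu^2 G^4F^2$ of Step~3 there, which forces the quotient $H_0$ to be a strictly positive unit. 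The discreteness of the denominator's zero set is then automatic from step~(i), without any refinement of Theorem~\ref{localcase}.
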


\subsection{Main result}\label{mainr}
Let us analyze what happens with general $C$-analytic surfaces. Denote the set of points $x$ of a $C$-analytic set $X\subset\R^n$ at which $\an_{X,x}$ is not a normal ring with $B(X)$. By \cite[Ch.VI.Thm.5]{n1} $B(X)$ is a $C$-analytic subset of $X$. In this work we prove the following result, whose proof works \em verbatim \em for $C$-analytic spaces of dimension $2$ (see \S\ref{complexification}).

\begin{thm}[Hilbert's 17th problem]\label{h17}
Let $X$ be a $C$-analytic surface and $f:X\to\R$ a positive semidefinite analytic function. Then there exist analytic functions $g,f_1,\ldots,f_{10}\in\an(X)$ such that $g^2f=\sum_{i=1}^{10}f_i^2$ and $\{g=0\}\subset\{f=0\}\cup B(X)$. In addition,
$$
p(\mer(X))\leq\begin{cases}
5&\text{if $X$ is coherent,}\\
10&\text{if $X$ is non-coherent.}
\end{cases}
$$
\end{thm}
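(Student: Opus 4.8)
The plan is to derive both assertions from the normal case by passing to a normalization, spending the factor $2$ in the non-coherent estimate only where non-coherence obstructs the descent. Two elementary devices will be used throughout: if $b^{2}f=\sum_{i=1}^{m}a_{i}^{2}$ in $\an(X)$, then also $(b^{2})^{2}f=\sum_{i=1}^{m}(ba_{i})^{2}$, so the auxiliary function may always be taken to be a square, hence a sum of squares; and the inclusion $\{g=0\}\subset\{f=0\}\cup B(X)$ will come from controlling the poles of a meromorphic representation of $f$ and absorbing them into a common denominator. Besides the normal case \cite[Thm.~1.4]{abfr1} (in the form valid for normal $C$-analytic spaces of dimension $2$, see \S\ref{complexification}), we use $\mer(Y)\cong\mer(\widehat Y)$ for a complexification $Y$ of $X$ with normalization $(\widehat Y,\pi)$ \cite[Thm.~3.9]{as}.

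\emph{Coherent case.} If $X$ is coherent, then $\widehat X:=\pi^{-1}(X)$ is the real part space of $\widehat Y$, a normal $C$-analytic space of dimension $2$; the induced finite surjection $\pi\colon\widehat X\to X$ gives $\mer(X)\cong\mer(\widehat X)$ via $q\mapsto q\circ\pi$ and is an analytic isomorphism over $X\setminus B(X)$ (as $\an_{X,x}$ is normal for $x\notin B(X)$). The pull-back $\widehat f:=f\circ\pi$ is positive semidefinite on $\widehat X$, so the normal case yields $\widehat g,\widehat f_{1},\dots,\widehat f_{5}\in\an(\widehat X)$ with $\widehat g^{\,2}\widehat f=\sum_{i=1}^{5}\widehat f_{i}^{\,2}$, where $\widehat g$ is a sum of squares and $\{\widehat g=0\}$ is discrete in $\{\widehat f=0\}$. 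Transporting $\widehat f=\sum_{i=1}^{5}(\widehat f_{i}/\widehat g)^{2}$ through $\mer(X)\cong\mer(\widehat X)$ gives $f=\sum_{i=1}^{5}q_{i}^{2}$ with $q_{i}\in\mer(X)$; a common denominator $b\in\an(X)$, $q_{i}=a_{i}/b$, produces $b^{2}f=\sum_{i=1}^{5}a_{i}^{2}$ in $\an(X)$. Since $\pi$ is an isomorphism over $X\setminus B(X)$ and finite, the poles of the $q_{i}$ lie in $\pi(\{\widehat g=0\})\cup B(X)$ and $\pi(\{\widehat g=0\})$ is a discrete subset of $\pi(\{\widehat f=0\})=\{f=0\}$; hence $b$ can be chosen with $\{b=0\}\subset\{f=0\}\cup B(X)$. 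With $g:=b^{2}$, $f_{i}:=ba_{i}$ $(1\le i\le5)$ and $f_{6}=\dots=f_{10}=0$ we obtain $g^{2}f=\sum_{i=1}^{10}f_{i}^{2}$ with $\{g=0\}\subset\{f=0\}\cup B(X)$, and $p(\mer(X))\le5$.

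\emph{Non-coherent case.} Now $\pi^{-1}(X)$ is not normal and $\mer(X)\not\cong\mer(\widehat X)$ in general: non-coherence is concentrated along a $C$-analytic curve contained in $B(X)$, over which the fibres of $\pi$ acquire non-real points, and the descent above fails there. The plan is to replace $X$ near that curve by a coherent model at the cost of doubling the number of squares. One constructs a finite surjective analytic map $\rho\colon X'\to X$, with $X'$ a coherent $C$-analytic space of dimension $2$ (obtained from the real part space of the normalization of a complexification together with a modification along $B(X)$), generically of degree at most $2$, unramified over $X\setminus B(X)$, and such that $\rho^{*}f$ is positive semidefinite. The coherent case applies to $\rho^{*}f$ on $X'$ and represents it as a sum of $5$ squares in $\mer(X')$ with poles in $\{\rho^{*}f=0\}\cup B(X')$; since $\mer(X')$ is a $\mer(X)$-module of rank at most $2$, a transfer (trace-form) argument descends this, at the cost of at most doubling the count, to a representation of $f$ as a sum of at most $10$ squares in $\mer(X)$, whose poles — together with the lower-dimensional ramification locus of $\rho$ — remain in $\{f=0\}\cup B(X)$. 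The square-of-denominator device then yields $g^{2}f=\sum_{i=1}^{10}f_{i}^{2}$ with $\{g=0\}\subset\{f=0\}\cup B(X)$ and $p(\mer(X))\le10$.

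\emph{Main obstacle.} The whole difficulty is the non-coherent case: one must build the coherent model $\rho\colon X'\to X$ with its degree and ramification under control — this is where the local structure of non-coherent $C$-analytic surfaces (umbrella-type singularities, and the fact that $B(X)$ and the non-coherence locus are $C$-analytic of dimension $\le1$) and the globalization via Cartan's Theorem~B on an invariant Stein neighbourhood must be used carefully — and then organize the descent along $\rho$ so that the square count is exactly $10$ and no pole appears outside $\{f=0\}\cup B(X)$.
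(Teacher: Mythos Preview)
Your coherent case is essentially the paper's argument: pass to the normalization $\widehat X=\pi^{-1}(X)$, which in the coherent situation is a normal $C$-analytic space of dimension $2$ with $\mer(X)\cong\mer(\widehat X)$, apply the normal result, and descend. The paper re-does part of the normal argument rather than quoting \cite[Thm.~1.4]{abfr1} wholesale, but the strategy is the same and your version is fine.

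Your non-coherent case, however, has a genuine gap and also diverges sharply from what the paper does. First, a factual point: for a $C$-analytic surface the non-coherence locus $N(X)$ is a \emph{discrete} set, not a curve (see \S\ref{casai2} and Corollary~\ref{dim12}); the one-dimensional object that matters is the curve $C:=\operatorname{Cl}(\pi^{-1}(X)\setminus\widehat X)$ sitting in the \emph{complex} normalization $\widehat Y$, not in $X$. Second, your whole argument rests on producing a coherent $X'$ together with a finite map $\rho\colon X'\to X$ that is generically of degree $\le 2$, so that a trace-form transfer doubles the square count. You do not construct $\rho$, and there is no reason to expect such a degree-$2$ model: the natural candidate $\pi|_{\widehat X}\colon\widehat X\to X$ is generically one-to-one (so no transfer gain), while $\pi^{-1}(X)=\widehat X\cup C$ is not even a real analytic space of the right kind. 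The ``modification along $B(X)$'' you allude to is exactly the missing content; without it the argument is a wish, not a proof. You flag this yourself in the final paragraph, but the difficulty is not a technicality to be filled in later---it is the heart of the theorem.

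The paper avoids this entirely. It never builds a coherent degree-$2$ cover. Instead it works on the normal Stein space $\widehat Y$ with its anti-involution, extends $f$ to an invariant holomorphic $F$ and pulls back to $F'=F\circ\pi$, and then runs a chain of reductions that keep track simultaneously of $\widehat X$ \emph{and} of the imaginary curve $C$: Lemma~\ref{dim1} removes the one-dimensional part of $\{F'|_{\widehat X}=0\}$; Theorem~\ref{movc} makes the zero set discrete on $\widehat X\cup C$; Theorem~\ref{maintool} pushes the zeros on $C$ into $\widehat X\cap C$; Theorem~\ref{discretezero-set} turns the remaining discrete zeros into a representation in $\Sos_8\an^{\widehat\sigma}(\widehat Y)^2$ times a unit $H$; finally an optimal universal denominator $D$ (Lemma~\ref{ud}) and Lemma~\ref{lsr} descend $H$ to a sum of $3$ squares on $X$, and the multiplicativity $\Sos_2\cdot\Sos_8\subset\Sos_{10}$ closes the count. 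The bound $10$ thus arises from $8+2$ (plus a careful bookkeeping), not from a $5\times 2$ trace transfer. If you want your approach to go through, you would need an honest construction of $\rho$ with the stated degree bound---and nothing in the paper suggests one exists.
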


As we use the normalization of $(X,\an_X)$ to prove Theorem \ref{h17}, it seems difficult to get rid of the set $B(X)$ in the zero-set $\{g=0\}$ of the denominator $g$.

\subsection{Real Nullstellensatz}
In close relation to Hil\-bert's 17th problem there is a classical result in Real Geometry: \em the real Nullstellensatz \cite{st}\em. Given an ideal $\gta$ of a (commutative unital) ring $A$ we define its real radical ideal as
$$
\sqrt[r]{\gta}:=\Big\{f\in A:\ f^{2m}+\sum_{i=1}^ra_i^2\in\gta,\ a_i\in A, m,r\geq1\Big\}.
$$
If $X\subset\R^n$ is a $C$-analytic set and $\gta$ is an ideal of $\an(X)$, we define its \em saturation \em as $\widetilde{\gta}:=H^0(X,\gta\an_X)$. In addition, we denote $\Zz(\gta):=\{x\in X:\ f(x)=0\ \forall f\in\gta\}$. As a direct consequence of Theorem \ref{h17} and \cite{abf3}, we obtain the following real Nullstellensatz. 

\begin{cor}[Real Nullstellensatz]\label{null}
Let $X$ be a $C$-analytic set of dimension $\leq2$ and $\gta$ an ideal of $\an(X)$. Then $\Jj(\Zz(\gta))=\widetilde{\sqrt[r]{\gta}}$.
\end{cor}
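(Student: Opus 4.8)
The plan is to deduce the real Nullstellensatz from the solution of Hilbert's 17th problem (Theorem \ref{h17}) by the standard route: one inclusion is formal, and the reverse inclusion is where the positive-semidefinite-implies-sum-of-squares statement enters, combined with the result of \cite{abf3} that handles the passage from ideals to their saturations. First I would reduce to the surface case: if $\dim X\leq1$ the statement follows from the one-dimensional theory already quoted in the introduction (or directly, since $p(\mer(X))=1$ there), so we may assume $X$ is a $C$-analytic surface and apply Theorem \ref{h17}. The inclusion $\widetilde{\sqrt[r]{\gta}}\subseteq\Jj(\Zz(\gta))$ is the easy one: if $f\in\sqrt[r]{\gta}$, then $f^{2m}+\sum a_i^2\in\gta$ vanishes on $\Zz(\gta)$, hence so does $f^{2m}$ and therefore $f$; passing to global sections of the sheaf $\sqrt[r]{\gta}\an_X$ preserves this, since $\Zz(\gta)=\Zz(\widetilde{\gta})$ and vanishing on a set is a local condition, so every element of $\widetilde{\sqrt[r]{\gta}}$ vanishes on $\Zz(\gta)$.

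For the reverse inclusion $\Jj(\Zz(\gta))\subseteq\widetilde{\sqrt[r]{\gta}}$, take $f\in\Jj(\Zz(\gta))$. The key observation is that $\Zz(\gta)=\Zz(\widetilde{\gta})$ and that, by \cite{abf3}, the saturation $\widetilde{\gta}$ is a closed (hence finitely generated, by Cartan) ideal whose real radical is again saturated; this is precisely the technical input that lets one replace $\gta$ by $\widetilde{\gta}$ and work with a well-behaved ideal sheaf. Writing $\widetilde{\gta}=(g_1,\ldots,g_s)$, the function $h:=f^2+g_1^2+\cdots+g_s^2$ is a positive semidefinite analytic function on $X$ whose zero-set is exactly $\Zz(\gta)$; restricting to the $C$-analytic surface $\Zz(\gta)$ (or rather working on $X$ directly) one sees $f^2\leq h$ and $f$ vanishes wherever $h$ does. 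Now apply Theorem \ref{h17} to $h$ — or, more to the point, apply the Positivstellensatz-type consequence: since $f$ vanishes on $\{h=0\}$, a suitable power $f^{2m}$ admits, after multiplying by a sum of squares $g^2$ with $\{g=0\}\subseteq\{h=0\}\cup B(X)\subseteq\Zz(\gta)$, a representation of the form $g^2f^{2m}=(\text{sum of squares of analytic functions})$ modulo $h\cdot\an(X)\subseteq\widetilde{\gta}$. Clearing the sum of squares $g^2$ into the real-radical relation (using that $g$ itself, being a sum of squares vanishing on $\Zz(\gta)$, lies in $\sqrt[r]{\widetilde{\gta}}$) yields $f^{2m}+\sum(\cdots)^2\in\widetilde{\gta}$ locally at every point, i.e. $f\in\widetilde{\sqrt[r]{\widetilde{\gta}}}=\widetilde{\sqrt[r]{\gta}}$, where the last equality again invokes \cite{abf3}.

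The main obstacle I anticipate is not the Hilbert-17 input itself but the bookkeeping of saturations: one must know that $\sqrt[r]{\widetilde{\gta}}$ is already saturated (so that $\widetilde{\sqrt[r]{\widetilde{\gta}}}=\sqrt[r]{\widetilde{\gta}}$) and that $\sqrt[r]{\gta}$ and $\sqrt[r]{\widetilde{\gta}}$ have the same saturation — these are exactly the structural facts about closed ideals in rings of global analytic functions proved in \cite{abf3}, and the whole argument is organized so as to reduce everything to them plus Theorem \ref{h17}. A secondary point requiring care is that the denominator $g$ produced by Theorem \ref{h17} carries the extra set $B(X)$, so one must check $B(X)\cap\Zz(\gta)$ causes no trouble; but since we only ever use that $\{g=0\}\subseteq\Zz(\gta)$ after having arranged $h$ to vanish on all of $B(X)$ as well (e.g. by enlarging $\gta$ harmlessly, as $B(X)$ is itself $C$-analytic), this is absorbed into the same formalism. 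Finally, one should remark explicitly that the statement is for $C$-analytic sets of dimension $\leq2$ and that the dimension $\leq1$ case, where Theorem \ref{h17} is not literally invoked, is covered by the elementary curve computations recalled above.
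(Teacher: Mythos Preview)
The paper itself gives no detailed proof: it simply asserts that Corollary~\ref{null} is a \emph{direct consequence of Theorem~\ref{h17} and \cite{abf3}}. The implicit argument is that \cite{abf3} contains a general reduction of the real Nullstellensatz for $C$-analytic sets to the qualitative Hilbert~17 statement (every positive semidefinite analytic function is a sum of squares of meromorphic functions), and Theorem~\ref{h17} supplies that input when $\dim X\le 2$. Your overall plan---``Theorem~\ref{h17} plus the machinery of \cite{abf3}''---is therefore exactly the intended one.

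However, your attempt to unpack that machinery by hand contains a genuine gap. With $h:=f^2+g_1^2+\cdots+g_s^2$ you assert $h\cdot\an(X)\subset\widetilde{\gta}$, but this fails: since $g_1^2+\cdots+g_s^2\in\widetilde{\gta}$, the inclusion would force $f^2\in\widetilde{\gta}$, which already implies $f\in\sqrt[r]{\widetilde{\gta}}$ and makes the argument circular. More seriously, applying Theorem~\ref{h17} to $h$ only yields $g_0^2h=\sum f_i^2$, which gives no relation of the shape $g^2f^{2m}\equiv(\text{sum of squares})\pmod{\widetilde{\gta}}$; you have not explained where the power $f^{2m}$ comes from. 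The passage from ``$f$ vanishes on $\{h=0\}$'' to such a congruence is precisely the nontrivial content of \cite{abf3} (which goes through the real spectrum and an Artin--Lang type property, the latter being where Hilbert~17 actually enters), and cannot be replaced by a one-line invocation of Theorem~\ref{h17} on $h$.

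Two smaller points. First, the worry about $B(X)$ is misplaced: the Nullstellensatz only needs the qualitative conclusion of Theorem~\ref{h17} (representability as a sum of squares in $\mer(X)$), not the control on the denominator's zero set, so there is nothing to absorb. Your proposed fix---enlarging $\gta$ so that its zero set contains $B(X)$---would in any case change both $\Zz(\gta)$ and $\sqrt[r]{\gta}$ and is not harmless. Second, you do not need a separate treatment of the case $\dim X\le 1$: the reduction in \cite{abf3} applies uniformly, and the Hilbert~17 input for curves is the elementary fact $p(\mer(X))\leq 2$ already recalled in the introduction.
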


The fact that the set $B(X)$ of non-normal points of $X$ appears in the set of poles of a representation of a positive semidefinite analytic function on a $C$-analytic surface $X$ as sums of squares of meromorphic functions makes it difficult, even in the coherent case, to obtain a weak Positivstellensatz in the sense of \cite{abf1}. 

\subsection*{Structure of the article}
The article is organized as follows. In Section \ref{s2} we present some preliminary concepts and results that will allow us to prove Theorem \ref{h17} in Section \ref{s3}. We will treat a `full approximation' result (Theorem \ref{fa}) with special care in Section \ref{s2}, which has interest on its own, concerning approximation of continuous functions on the real part space of a Stein space (endowed with an anti-involution) by invariant holomorphic functions defined in the whole Stein space.

\subsection*{Acknowledgements}
The author is very grateful to S. Schramm for a careful reading of the final version and for the suggestions to refine its redaction.

\section{Preliminaries}\label{s2}

In the following \em holomorphic \em refers to the complex case and \em analytic \em to the real case. For a further reading about complex analytic spaces we refer to \cite{gr2}, whereas we remit the reader to \cite{gmt,t} for the theory of real analytic spaces. We denote the elements of $\an(X):=H^0(X,\an_X)$ with capital letters if $(X,\an_X)$ is a complex analytic space and with lowercase letters if $(X,\an_X)$ is a real analytic space. 

A \em positive semidefinite (global) analytic function \em on a real analytic space $(X,\an_X)$ is an element $f\in\an(X)$ such that $f(x)\geq 0$ for each $x\in X$. We denote the set of all sums of (resp. $p$) squares of the ring $\an(X)$ of analytic functions on $X$ with $\Sos\an(X)^2$ (resp. $\Sos_p\an(X)^2$).

\subsection{General terminology}
Denote the coordinates in $\C^n$ with $z:=(z_1,\ldots,z_n)$ where $z_i:=x_i+\sqrt{-1}y_i$. Consider the conjugation $\ol{\,\cdot\,}:\C^n\to\C^n,\ z\mapsto\ol{z}:=(\ol{z_1},\ldots,\ol{z_n})$ of $\C^n$, whose set of fixed points is $\R^n$. A subset $A\subset\C^n$ is \em invariant \em if $\ol{A}=A$. Obviously, $A\cap\ol{A}$ is the biggest invariant subset of $A$. Let $\Omega\subset\C^n$ be an invariant open set and $F:\Omega\to\C$ a holomorphic function. We say that $F$ is \em invariant \em if $F(z)=\ol{F(\ol{z})}$ for each $z\in\Omega$. This implies that $F$ restricts to a real analytic function on $\Omega\cap\R^n$. Conversely, if $f$ is analytic on $\R^n$, it can be extended to an invariant holomorphic function $F$ on some invariant open neighborhood $\Omega$ of $\R^n$. If $F:\Omega\to\C$ is a holomorphic function and $\Omega$ is invariant, then
$$
\Re(F):\Omega\to\C,\ z\mapsto\tfrac{F(z)+\ol{F(\ol{z})}}{2}\quad\text{and}\quad\Im(F):\Omega\to\C,\ z\mapsto\tfrac{F(z)-\ol{F(\ol{z})}}{2\sqrt{-1}}
$$ 
are invariant holomorphic functions that satisfy $F=\Re(F)+\sqrt{-1}\,\Im(F)$. 

\subsection{Reduced analytic spaces \cite[\S I.1]{gmt}}
Let $\K:=\R$ or $\C$ and $(X,\an_X)$ be either a complex or real analytic space. Let $\Ff_X$ be the sheaf of $\K$-valued functions on $X$ and $\vartheta:\an_X\to\Ff_X$ the morphism of sheaves defined for each open set $U\subset X$ by 
$$
\vartheta_U(s):U\to\K,\ x\mapsto s(x),
$$ 
where $s(x)$ is the class of $s$ module the maximal ideal $\gtm_{X,x}$ of $\an_{X,x}$. Recall that $(X,\an_X)$ is \em reduced \em if $\vartheta$ is injective. Denote the image of $\an_X$ under $\vartheta$ with $\an_X^r$. The pair $(X,\an_X^r)$ is called the \em reduction \em of $(X,\an_X)$ and $(X,\an_X)$ is reduced if and only if $\an_X=\an_X^r$. The reduction is a covariant functor from the category of $\K$-analytic spaces to the one of reduced $\K$-analytic spaces.

\subsection{Anti-involution and complexifications \cite[\S II.4]{gmt}}
Let $(Y,\an_Y)$ be a complex analytic space. An \em anti-involution \em on $(Y,\an_Y)$ is a morphism $\sigma:(Y,\an_Y)\to(Y,\ol{\an}_Y)$ such that $\sigma^2=\id_Y$ and it transforms the sheaf of holomorphic sections $\an_Y$ into the sheaf of antiholomorphic sections $\ol{\an}_Y$. We denote the sheaf of ($\sigma$-)invariant holomorphic sections with $\an_Y^\sigma$: \em if $U\subset Y$ is open, then $H^0(U,\an_Y^\sigma):=\{F\in H^0(U,\an_Y):\ F=\ol{F}\circ\sigma\}$\em.

\subsubsection{Real part space}\label{fixed}
Let $(Y,\an_Y)$ be a complex analytic space endowed with an anti-involu\-tion $\sigma$. Let $Y^\sigma:=\{x\in Y:\ \sigma(x)=x\}$ and define the sheaf $\an_{Y^\sigma}:=\an^\sigma_Y|_{Y^\sigma}$. The $\R$-ringed space $(Y^\sigma,\an_{Y^\sigma})$ is called the \em real part space of $(Y,\an_Y)$ \em (with respect to $\sigma$), whereas $Y\setminus Y^\sigma$ is the \em imaginary part \em of $(Y,\an_Y)$. By \cite[Thm.II.4.10]{gmt} $(Y^\sigma,\an_{Y^\sigma})$ is a real analytic space if $Y^\sigma\neq\varnothing$.

\subsubsection{Complexification and $C$-analytic spaces \em \cite[\S III.3]{gmt}}\label{complexification} 
A real analytic space $(X,\an_X)$ is a \em $C$-analytic space \em if it satisfies one of the following two equivalent conditions:
\begin{itemize}
\item[(1)] Each local model of $(X,\an_X)$ is defined by a coherent sheaf of ideals, which is not necessarily associated to a \em well reduced structure \em (see \S\ref{cas}).
\item[(2)] There exists a complex analytic space $(Y,\an_{Y})$ endowed with an anti-involution $\sigma$ whose real part space is $(X,\an_X)$.
\end{itemize}
The complex analytic space $(Y,\an_{Y})$ is called a \em complexification \em of $X$ and it satisfies the following properties:
\begin{itemize}
\item[(i)] $\an_{Y,x}=\an_{X,x}\otimes\C$ for each $x\in X$.
\item[(ii)] The germ of $(Y,\an_{Y})$ at $X$ is unique up to an isomorphism.
\item[(iii)] $X$ has a fundamental system of invariant open Stein neighborhoods in $Y$. 
\item[(iv)] If $X$ is reduced, then $Y$ is also reduced.
\end{itemize} 
For further details see \cite{c,gmt,t,wb}. 

\subsubsection{Irreducible components of a $C$-analytic space}
A $C$-analytic space is \em irreducible \em if it is not the union of two $C$-analytic subspaces different from itself. In addition, $X$ is irreducible if and only if it admits a fundamental system of invariant irreducible complexifications. Given a $C$-analytic space $X$, there exists a unique irredundant (countable) locally finite family of irreducible $C$-analytic subspaces $\{X_i\}_{i\geq1}$ such that $X=\bigcup_{i\geq1}X_i$. The $C$-analytic subspaces $X_i$ are called the \em irreducible components of $X$\em. For further details see \cite{fe12,wb}. It is important to point out that $\mer(X)\cong\prod_{i\in I}\mer(X_i)$. Thus, when dealing with meromorphic functions, it is common to assume that $X$ is irreducible in order to study the properties of the ring $\mer(X)$. If such is the case, $\mer(X)$ is the quotient field of the integral domain $\an(X)$.

\subsubsection{Full approximation}
We now present the following result concerning approximation of continuous functions on the real part space of a Stein space (endowed with an anti-involution) by invariant holomorphic functions defined in the whole Stein space.

\begin{thm}[Full approximation]\label{fa}
Let $(Y,\an_Y)$ be a Stein space endowed with an anti-involu\-tion $\sigma$ and $(X,\an_X)$ its real part space. Let $g:X\to\R$ be a continuous function and $\veps:X\to\R$ a strictly positive continuous function. Then 
\begin{itemize}
\item[(i)] There exists an invariant holomorphic function $F:Y\to\C$ such that $|F|_{X}-g|<\veps$.
\item[(ii)] There exists an invariant holomorphic function $B:Y\to\C$ such that $0<B|_{X}<\veps$ and $\{B=0\}=\varnothing$.
\item[(iii)] If $\{y_j\}_{j\geq1}\subset Y$ is a discrete subset of $Y$, we may assume $F(y_j)\neq0$ for each $j\geq1$.
\end{itemize}
\end{thm}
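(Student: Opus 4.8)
The plan is to deduce Theorem~\ref{fa} from classical Stein theory applied to the invariant setting, reducing everything to approximation by invariant holomorphic functions, and then handling the positivity and nonvanishing refinements by a standard exponential/square trick.

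First I would set up the invariant function theory on $(Y,\an_Y)$. Since $Y$ is a Stein space, the sheaf $\an_Y$ is acyclic and global holomorphic functions separate points and give local coordinates; moreover, continuous functions on $Y$ can be approximated uniformly on compact sets by holomorphic functions (this is the content of the Oka--Weil/Forstneri\v{c}--type approximation theorem on Stein spaces, together with the fact that $H^0(Y,\an_Y)$ is dense in the space of continuous functions with respect to the compact-open topology; the sharper statement allowing prescribed error $\veps$ on all of $X$ follows by exhausting $Y$ by invariant compacta and patching corrections, using that $\R^1$-valued positive continuous functions can be manufactured as restrictions of invariant holomorphic functions). To get the \emph{invariant} version, I would observe that for any holomorphic $F$ the function $\Re(F)$ defined in the General terminology subsection is invariant and holomorphic, and if $F$ approximates the real-valued $g$ well on $X$, then $\Re(F)$ approximates $g$ at least as well on $X$ (because on $X=Y^\sigma$ one has $\ol{F(\ol{z})}=\ol{F(\sigma(z))}$, and the hypothesis $F=\ol{F}\circ\sigma$ is not needed after this symmetrization — averaging forces invariance while preserving the bound since $g$ is real). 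This proves (i).

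For (ii), the idea is to produce a strictly positive invariant holomorphic function bounded by $\veps$ with empty zero set. Apply (i) to approximate $\log(\veps/2)$ (a continuous real function on $X$) by an invariant holomorphic $G:Y\to\C$ with $|G|_X-\log(\veps/2)|$ small; then set $B:=\exp(\Re(G))$, which is invariant holomorphic (composition and real part of invariant holomorphic functions, with $\exp$ defined on all of $\C$), is nowhere zero on $Y$ by construction, and on $X$ is real and positive with $B|_X$ close to $\veps/2$, hence $0<B|_X<\veps$ after a mild adjustment of the tolerance in (i). For (iii), given a discrete set $\{y_j\}\subset Y$, I would use that a Stein space admits holomorphic functions with prescribed (finite-order) behavior along a discrete set: choose an invariant holomorphic $H$ vanishing to first order at each $y_j$ that is not already a zero of $F$ (interpolation/Cousin-type argument on the Stein space $Y$, again symmetrized to be invariant), and replace $F$ by $F+cH$ for a sufficiently small real constant $c$ chosen generically so that no $y_j$ remains a zero; smallness of $c$ keeps the estimate $|F|_X-g|<\veps$ intact.

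The main obstacle I expect is the passage from the qualitative density of holomorphic functions to the \emph{quantitative} statement with an arbitrary strictly positive continuous weight $\veps:X\to\R$ controlling the error on the (possibly noncompact) real part space $X$, all while staying inside the invariant category on a singular Stein space. The standard route is an exhaustion of $Y$ by invariant Stein compacta $K_1\subset K_2\subset\cdots$, building $F$ as a locally uniformly convergent series $\sum F_k$ where $F_k$ corrects the error on $K_k\setminus K_{k-1}$ within a fraction of $\inf_{K_k}\veps$; invariance is preserved termwise by symmetrization, and the Stein acyclicity guarantees the needed extensions and bounds. One must also be slightly careful that $\veps$, being only continuous on $X$, is first minorized by a positive invariant holomorphic function (itself obtained from part (ii), or rather a lemma feeding into it) so that the weight is controlled by something holomorphic; this mild circularity is broken by first proving a crude version of (ii) with empty zero set but without the upper bound, then bootstrapping. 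Everything else is routine Stein theory.
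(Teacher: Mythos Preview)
Your argument for (i) has a genuine gap: the assertion that $H^0(Y,\an_Y)$ is dense in the space of continuous functions on $Y$ for the compact-open topology is false (already on $Y=\C$ one cannot approximate $\bar z$ uniformly on a disk by holomorphic functions). Oka--Weil approximates \emph{holomorphic} functions on a neighborhood of a holomorphically convex compact by global holomorphic functions; it says nothing about approximating continuous data. What makes the theorem true is that one only needs the approximation on the \emph{real part} $X$, a totally real subset of $Y$; but you neither invoke a totally-real approximation theorem nor explain how it would apply on a possibly singular Stein space. The paper avoids this entirely by using the equivariant embedding theorem to realize $(Y,\sigma)$ as an invariant complex-analytic subset $Z\subset\C^N$ with $\varphi(X)\subset\R^N$, then extending $g$ and $\veps$ continuously to $\R^N$, applying Whitney's approximation theorem on $\R^N$, and choosing the exhaustion carefully so that the resulting real-analytic approximant extends holomorphically to all of $\C^N$ (hence restricts to all of $Y$). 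This embedding step is the missing idea.

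Your (iii) also does not work as written: the interpolating function $H$ has no size control on the non-compact set $X$, so for no fixed real $c$ is $|cH|_X|<\veps$ globally, and the estimate $|F+cH-g|<\veps$ is lost. An easy repair is to use the function $B$ from (ii) in place of $H$: since $B|_X<\veps$ and $B$ is nowhere zero, $F+cB$ stays within the tolerance for small $c$ and is nonzero at every $y_j$ for all but countably many $c$. The paper uses a different device, perturbing the \emph{argument} of the ambient approximant via $\Phi_\lambda=\id_{\C^N}+\lambda B^*$ with $B^*$ small on $\R^N$, and then choosing $\lambda$ generically. Your (ii) is correct once (i) is established; the paper's variant (approximating $1/\veps_0$ and taking $\exp(-1-A^2)$) is just a different packaging of the same exponential trick.
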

\begin{proof}
(i) By \cite{n0,tto} there exist an integer $N\geq1$ and an injective proper invariant holomorphic map $\varphi:Y\to Z:=\varphi(Y)\subset\C^N$ such that $Z$ is a (global) analytic subset of $\C^N$ invariant under the usual conjugation $\ol{\,\cdot\,}$ of $\C^N$ and the following diagram is commutative
$$
\xymatrix{
Y\ar[r]^\varphi\ar@{<->}[d]_\sigma&Z\ar@{^{(}->}[r]\ar@{<->}[d]_{\ol{\,\cdot\,}}&\C^N\ar@{<->}[d]_{\ol{\,\cdot\,}}\\
Y\ar[r]^\varphi&Z\ar@{^{(}->}[r]&\C^N
}
$$
Let $Z^*:=\varphi(X)=\{z\in Z:\ z=\ol{z}\}$. Consider the continuous function $g':=g\circ(\varphi|_{X})^{-1}:Z^*\to\R$ and the strictly positive continuous function $\veps':=\veps\circ(\varphi|_{X})^{-1}:Z^*\to\R$. As $Z^*\subset\R^N$ is closed, there exist continuous extensions $g^*:\R^N\to\R$ of $g'$ and $\veps^*:\R^N\to\R$ of $\veps'$ to $\R^N$. Using a suitable partition of unity, we may assume $\veps^*$ is strictly positive on $\R^N$. By Whitney's approximation theorem \cite[\S1.6]{n4} there exists an analytic function $f^*:\R^N\to\R$ such that $|f^*-g^*|<\tfrac{1}{2}\veps^*$. This analytic function $f^*$ extends to an invariant (under conjugation) holomorphic function $F^*:\Omega\to\C$ where $\Omega$ is an invariant neighborhood of $\R^n$ in $\C^n$. Following the proof of Whitney's approximation theorem \cite[\S1.6]{n4}, one realizes that it is possible to assume $\Omega=\C^N$. 

To that end (see \cite[\S1.6.11, pag. 34-35]{n4}), it is enough to construct an exhaustion by compact sets $\{K_m\}_{m\geq1}$ of $\R^N$ (that is, $\R^N=\bigcup_{m\geq1}K_m$ and $K_m\subset\Int(K_{m+1})$ for each $m\geq1$) and invariant open neighborhoods $U_m\subset\C^N$ of $K_m$ such that $\C^N=\bigcup_{m\geq1}U_m$ and 
$$
{\rm Re}((z_1-x_1)^2+\cdots+(z_N-x_N)^2)>\tfrac{1}{2}\dist(K_m,\R^N\setminus K_{m+1})
$$
for each $z:=(z_1,\ldots,z_N)\in U_m$ and each $x:=(x_1,\ldots,x_N)\in\R^N\setminus K_{m+1}$. If $z_i:=a_i+\sqrt{-1}b_i$, $a:=(a_1,\ldots,a_N)$ and $b:=(b_1,\ldots,b_N)$, we obtain
$$
{\rm Re}((z_1-x_1)^2+\cdots+(z_N-x_N)^2)=\|a-x\|^2-\|b\|^2>\|a-x\|-\|b\|
$$
if $\|a-x\|+\|b\|>1$. Denote the closed (resp. open) ball of center $y\in\R^N$ and radio $r>0$ with $\ol{\Bb}_N(y,r)$ (resp. $\Bb_N(y,r)$). Define $K_m:=\ol{\Bb}_N(0,r_m)$ and
$$
U_m:=\{(a_1+\sqrt{-1}b_1,\ldots,a_N+\sqrt{-1}b_N):\ (a_1,\ldots,a_N)\in\Bb_N(0,r_m+\tfrac{1}{2}),\ b_i\in(-m,m)\ \forall\,i \}
$$
where $r_{m+1}:=r_m+1+2\sqrt{N}m$ and $r_0=0$ (that is, $r_m:=m+\sqrt{N}m(m+1)$). Observe that 
$$
\dist(K_m,\R^N\setminus K_{m+1})=r_{m+1}-r_m=1+2\sqrt{N}m
$$ 
and if $z\in U_m$ and $x\in\R^N\setminus K_{m+1}$, then
\begin{multline*}
{\rm Re}((z_1-x_1)^2+\cdots+(z_N-x_N)^2)>r_{m+1}-(r_m+\tfrac{1}{2})-\sqrt{N}m\\
=1+2\sqrt{N}m-\tfrac{1}{2}-\sqrt{N}m=\frac{1}{2}+\sqrt{N}m=\frac{1}{2}\dist(K_m,\R^N\setminus K_{m+1}).
\end{multline*}
For this choice the approximating analytic function $f^*:\R^n\to\R$ provided in \cite[\S1.6]{n4} extends to an invariant holomorphic function $F^*:\C^N\to\C$. Define $F:=F^*\circ\varphi:Y\to\C$, which is an invariant holomorphic function. We have
$$
|F|_{X}-g|=|F^*\circ\varphi|_{X}-g\circ(\varphi|_{X})^{-1}\circ\varphi|_{X}|=|f^*\circ\varphi|_{X}-g^*\circ\varphi|_{X}|<\tfrac{1}{2}\veps^*\circ\varphi|_{X}=\tfrac{1}{2}\veps,
$$
which proves (i).

For the rest of the proof we keep the notations introduced above.

(ii) Define 
$$
\veps_0^*:=\frac{\veps^*}{2(1+\veps^*)}<\frac{1}{2}.
$$
By (i) there is an invariant holomorphic function $A^*:\C^N\to\C$ such that $|A^*|_{\R^N}-\frac{1}{\veps_0^*}|<\veps_0^*$, so
$$
\frac{1}{2\veps_0^*}<\frac{1-(\veps_0^*)^2}{\veps_0^*}<A^*|_{\R^N}.
$$
Consequently, $B^*:=\exp(-1-(A^*)^2)$ satisfies
$$
0<B^*|_{\R^N}=\exp(-1-(A^*)^2)|_{\R^N}<\frac{1}{1+(A^*)^2|_{\R^N}}\leq\frac{1}{2A^*|_{\R^N}}<\veps^*_0<\veps^*
$$
and $\{B^*=0\}=\varnothing$.

(iii) Let us modify the $F^*$ constructed in (i). If $F^*=0$, then $|g^*|<\tfrac{1}{2}\veps^*$. By (ii) there exists an invariant holomorphic function $B^*:\C^N\to\C$ such that $0<B^*|_{\R^N}<\tfrac{1}{2}\veps^*$ and $\{B^*=0\}=\varnothing$. Thus, $|g^*-B^*|_{\R^N}|\leq|g^*|+|B^*|_{\R^N}|<\veps^*$ and $B^*(y_j)\neq0$ for each $j\geq1$. Assume in the following $F^*\neq0$. By \cite[\S2.5. Ex.10, pp. 64-65]{hir} there exists a strictly positive continuous function $\delta^*:\R^N\to\R$ such that if $\varphi:=(\varphi_1,\ldots,\varphi_N):\R^N\to\R^n$ is a continuous map and satisfies $|\varphi_i(x)-x_i|<\delta^*(x)$ for each $x\in\R^n$ and each $i=1,\ldots,N$, then $|F^*|_{\R^N}-F^*|_{\R^N}\circ\varphi|<\tfrac{1}{2}\veps^*$. By (ii) there exists an invariant holomorphic function $B^*:\C^N\to\C$ such that $0<B^*|_{\R^N}<\delta^*$ and $\{B^*=0\}=\varnothing$. For each $\lambda:=(\lambda_1,\ldots,\lambda_N)\in\C^N$ define $\Phi_\lambda:=\id_\C^N+\lambda B^*$. If $\lambda\in(-1,1)^N$, we have $|F^*|_{\R^N}-F^*|_{\R^N}\circ\Phi_\lambda|<\tfrac{1}{2}\veps^*$.
Fix $j\geq1$. As $B^*(y_j)\neq0$ and $\{F^*=0\}$ is a hypersurface of $\C^N$, also $\{F^*(y_j+\lambda B^*(y_j))=0\}$ is a hypersurface of $\C^N$. Thus, $\Ss_j:=\{F^*(y_j+\lambda B^*(y_j))=0\}\cap\R^N$ is a $C$-analytic set of dimension $\leq N-1$. As this happens for each $j\geq1$, we have $\Tt:=(-1,1)^N\setminus\bigcup_{j\geq1}\Ss_j\neq\varnothing$ and it is enough to pick $\lambda\in\Tt$ to have in addition $F^*\circ\Phi_\lambda(y_j)\neq0$ for each $j\geq1$, as required.
\end{proof}

\subsection{$C$-analytic sets}\label{cas} 
The concept of $C$-analytic sets was introduced by Cartan in \cite[\S7,\S10]{c}. Recall that a subset $X\subset\R^n$ is \em $C$-analytic \em if there exists a finite set $\Ss:=\{f_1,\ldots,f_r\}$ of real analytic functions $f_i$ on $\R^n$ such that $X$ is the common zero-set of $\Ss$. This property is equivalent to the following:
\begin{itemize}
\item[(1)] There exists a coherent sheaf of ideals $\Ii$ on $\R^n$ such that $X$ is the zero-set of $\Ii$.
\item[(2)] There exist an open neighborhood $\Omega$ of $\R^n$ in $\C^n$ and a complex analytic subset $Z$ of $\Omega$ such that $Z\cap\R^n=X$.
\end{itemize}

\subsubsection{Well reduced structure}
Given a $C$-analytic set $X\subset\R^n$, the largest coherent sheaf of ideals $\Ii$ having $X$ as zero-set is $\Jj(X)\an_{\R^n}$ by Cartan's Theorem $A$, where $\Jj(X)$ is the set of all analytic functions on $\R^n$ that are identically zero on $X$. The coherent sheaf $\an_X:=\an_{\R^n}/\Jj(X)\an_{\R^n}$ is called the \em well reduced structure of $X$\em. The $C$-analytic set $X$ endowed with its well reduced structure is a $C$-analytic space, so it has a well-defined complexification as commented above. By \cite{n0,tto} the only limitation that a $C$-analytic space has to be embedded in some $\R^N$ as a $C$-analytic set is the local embedding dimension, which has to be bounded by a common constant at all points of the $C$-analytic space.

\subsubsection{Set of points of non-coherence of a $C$-analytic set}
A $C$-analytic set $X\subset\R^n$ is {\em coherent} if the sheaf $\Jj_X$ of germs of analytic functions vanishing identically on $X$ is an $\an_{\R^n}$-coherent sheaf of modules. As $\an_{\R^n}$ is a coherent sheaf of rings, $\Jj_X$ is \em coherent \em at $x\in\R^n$ if and only if it is of \em finite type \em at $x\in\R^n$, that is, there exists an open neighborhood $U$ of $x$ in $\R^n$ and finitely many sections $f_i\in H^0(U,\Jj_X)$ such that for each $y\in U$ the germs $f_{1,y},\ldots,f_{r,y}$ generate the stalk $\Jj_{X,y}$ as an $\an_{X,y}$-module. By Cartan's Theorem A $X$ is coherent at $x\in X$ if and only if $\Jj_{X,x}=\Jj(X)\an_{\R^n,x}$. Thus, $X$ is {\em non-coherent at $x\in X$} if and only if $\Jj_{X,x}\neq\Jj(X)\an_{\R^n,x}$. We refer the reader to \cite{abf4,fe,gal,tt} for a precise study of the set $N(X)$ of points of non-coherence of the $C$-analytic set $X$. Classical examples of non-coherent $C$-analytic sets are Cartan's or Whitney's umbrellas.

\subsection{Singular and regular points}
Let $(Y,\an_Y)$ be a reduced complex analytic space and $y\in Y$. We say that $y\in Y$ is a \em regular point of $Y$ \em if $\an_{Y,y}$ is a regular local ring. Otherwise, we say $y\in Y$ is a \em singular point of $Y$\em. In the (reduced) complex case the regularity of $\an_{Y,y}$ is equivalent to the smoothness of the analytic set germ $Y_y$ (use for instance the Jacobian criterion). We denote the set of regular points of $Y$ with $\Reg(Y)$ and the set of singular points of $Y$ with $\Sing(Y)$.

\subsubsection{Singular set of a $C$-analytic space}\label{sing}

Let $(X,\an_X)$ be a $C$-analytic space and $(Y,\an_Y)$ a complexification of $X$. We define the \em singular locus of $X$ \em as $\Sing(X):=\Sing(Y)\cap X$. Observe that $(\Sing(X),\an_X|_{\Sing(X)})$ is a $C$-analytic space of dimension strictly smaller than the dimension of $X$. The complement $\Reg(X):=X\setminus\Sing(X)$ is the set of \em regular points of $X$ \em and corresponds to the set of those points $x\in X$ such that the stalk $\an_{X,x}$ is a local regular ring (recall here that $\an_{X,x}$ is a regular local ring if and only if its complexification $\an_{X,x}\otimes_\R\C$ is a regular local ring). Thus, $\Sing(X)$ depends only on $X$ and not in the chosen complexification.

We say that $x\in X$ is a \em smooth point \em of $X$ if there exists a neighborhood $U\subset X$ of $x$ such that $U$ is analytically diffeomorphic to an open subset of $\R^n$. It is important to distinguish between regular and smooth points because in the real case they are different concepts (\cite[Ex.2.2]{abf4} or \cite[Ex.2.1]{fe14}), although each regular point is always a smooth point. 

\subsection{Normalization of complex analytic spaces}\label{norm}
One defines the normalization of a complex analytic space in the following way \cite[VI.2]{n1} (see also \cite{abf5,hp} for an equivalent approach for Stein spaces that involves integral closure and Fr\'echet's topology). A complex analytic space $(Y,\an_Y)$ is \em normal \em if for each $y\in Y$ the local analytic ring $\an_{Y,y}$ is reduced and integrally closed. A \em normalization \em $((\widehat{Y},\an_{\widehat{Y}}),\pi)$ of a complex analytic space $(Y,\an_Y)$ is a normal complex analytic space $(\widehat{Y},\an_{\widehat{Y}})$ endowed with a proper surjective holomorphic map $\pi:\widehat{Y}\to Y$ with finite fibers such that $\widehat{Y}\setminus\pi^{-1}(\Sing(Y))$ is dense in $\widehat{Y}$ and the restriction $\pi|:\widehat{Y}\setminus\pi^{-1}(\Sing(Y))\to Y\setminus\Sing(Y)$ is an analytic isomorphism. The normalization $((\widehat{Y},\an_{\widehat{Y}}),\pi)$ of a reduced complex analytic space $(Y,\an_Y)$ always exists and is unique up to isomorphism \cite[VI.2.Lem.2 \& VI.3.Thm.4]{n1}. 

\subsubsection{Complex analytic spaces endowed with an anti-involution}\label{casai}
Let $(Y,\an_Y)$ be a (reduced) complex analytic space endowed with an anti-involution $\sigma$ and $((\widehat{Y},\an_{\widehat{Y}}),\pi)$ its normalization. By \cite[Prop.IV.3.10]{gmt} there exists an anti-involution $\widehat{\sigma}$ of $\widehat{Y}$ that makes the following diagram commutative
$$
\xymatrix{
\widehat{Y}\ar[r]^{\widehat{\sigma}}\ar[d]_\pi&\widehat{Y}\ar[d]^\pi\\
Y\ar[r]^\sigma&Y}
$$
Denote the real part space of $Y$ with $X:=Y^\sigma$ and the real part space of $\widehat{Y}$ with $\widehat{X}:=\widehat{Y}^{\widehat{\sigma}}$. Let $\an_Y^\sigma$ and $\an_{\widehat{Y}}^{\widehat{\sigma}}$ be the subsheaves of invariant holomorphic functions of $\an_Y$ and $\an_{\widehat{Y}}$. The pairs $(X,\an_X:=\an_Y^\sigma|_X)$ and $(\widehat{X},\an_{\widehat{X}}:=\an_{\widehat{Y}}^{\widehat{\sigma}}|_{\widehat{X}})$ are $C$-analytic spaces and $(Y,\an_Y)$ and $(\widehat{Y},\an_{\widehat{Y}})$ are corresponding complexifications for these real analytic spaces. The restriction map $\pi|_{\widehat{X}}:\widehat{X}\to X$ induces a real analytic morphism between $(\widehat{X},\an_{\widehat{X}})$ and $(X,\an_X)$. The restriction map
$$
\pi|_{\widehat{X}\setminus\pi^{-1}(\Sing(X))}:\widehat{X}\setminus\pi^{-1}(\Sing(X))\to X\setminus\Sing(X)
$$
is an analytic diffeomorphism and, as $\pi:\widehat{X}\to X$ is proper, $
\cl(X\setminus\Sing(X))\subset\pi(\widehat{X})\subset X$. Observe that $\cl(X\setminus\Sing(X))$ is the set of points $x\in X$ such that $\dim(X_x)=\dim(X)$. Thus, if $X$ is pure dimensional, $X=\cl(X\setminus\Sing(X))$ and $\pi:\widehat{X}\to X$ is surjective.

In addition, $\widehat{X}\subset\pi^{-1}(X)$ and the previous inclusion in general could be strict. By \cite[Lem.IV.3.13]{gmt} the equality $\widehat{X}=\pi^{-1}(X)$ is guaranteed if $X$ is a coherent $C$-analytic space. In addition, if such is the case, $(\widehat{X},\an_{\widehat{X}})$ is by \cite[Thm.IV.3.14]{gmt} a coherent real analytic space and $(\widehat{Y},\an_{\widehat{Y}})$ is a (normal) complexification of $(\widehat{X},\an_{\widehat{X}})$.

\subsubsection{$C$-analytic spaces of dimension $\leq2$}\label{casai2}
If $(X,\an_X)$ is a $C$-analytic space, the normality of a complexification $(Y,\an_Y)$ and the coherence of $(X,\an_X)$ are independent concepts if $X$ has dimension $\geq3$ (\cite[Esempio, p. 211]{t}). Let us analyze what happens if the dimension of $X$ is $\leq2$. If $X$ has dimension $1$, then it is coherent \cite{fe,gal}. In addition, the normality of a complexification $(Y,\an_Y)$ implies that $Y$ is non-singular \cite[Thm.VI.2.2]{n1}, so $X$ is also non-singular. 

If $X$ has dimension $2$, it is not coherent in general, but the normality of a complexification $(Y,\an_Y)$ implies that $Y$ has only isolated singularities and by \cite[Prop.III.2.8]{gmt} or \cite[V.\S1.Prop.5, pag.94]{n1} we deduce $(X,\an_X)$ is coherent.

In \cite[\S5]{abf4} we provide a careful study of the set $N(X)$ of points of non-coherence of a $C$-analytic set $X\subset\R^n$. We keep the notations introduced in \S\ref{casai}. If $X$ is a $C$-analytic space of dimension $2$, then $N(X)=\pi(\cl(\pi^{-1}(X)\setminus\widehat{X})\cap\widehat{X})$, which is the discrete subset of $X$. 

\begin{remarks}\label{casd2}
Let $(X,\an_X)$ be a $C$-analytic space of dimension $2$. Let us keep the notations introduced in \S\ref{casai} and \S\ref{casai2}.

(i) \em Each connected component of $\cl(\pi^{-1}(X)\setminus\widehat{X})$ meets $\widehat{X}$\em.

Suppose $\cl(\pi^{-1}(X)\setminus\widehat{X})$ has a connected component $C_1$ that does not meet $\widehat{X}$. Let $S:=\pi^{-1}(X)\setminus(C_1\cup\widehat{\sigma}(C_1))$ and $W\subset\widehat{Y}$ an invariant neighborhood of $S$ such that $\cl(W)\cap C_1=\varnothing$. Let $U$ be an open neighborhood of $C_1$ that does not meet $\cl(W)$. Define $T:=\widehat{Y}\setminus(W\cup U\cup\widehat{\sigma}(U))$, which is a closed subset of $\widehat{Y}$ that does not meet $\pi^{-1}(X)$. As $\pi$ is proper, $\Omega:=\pi^{-1}(Y\setminus\pi(T))$ is an open neighborhood of $\pi^{-1}(X)$. Substitute $\widehat{Y}$ by $\Omega$ and $Y$ by $\pi(\Omega)=Y\setminus\pi(T)$. Let $U'$ be the connected component of $\widehat{Y}$ that contains $C_1$ and does not meet $\widehat{X}$. Note that $U'$ has dimension $\geq1$ because $(Y,\an_Y)$ is a reduced complex analytic space. As $C_1$ does not meet $\widehat{X}$, we deduce $U'\cap\widehat{\sigma}(U')=\varnothing$. Then $Y_1:=\pi(U')$ and $Y_2:=\pi(\widehat{\sigma}(U'))$ are irreducible components of $Y$ and $Y_1\neq Y_2$. Observe that $Y_1\cap X=\pi(C_1)=\pi(\widehat{\sigma}(C_1))=Y_2\cap X$ (recall that $\pi\circ\widehat{\sigma}=\sigma\circ\pi$). Define $Y':=\pi(Y\setminus(U'\cup\widehat{\sigma}(U')))\cup(Y_1\cap Y_2)\subset Y$ and observe that $X$ is the real part space of the reduced complex analytic space $(Y',\an_{Y'})$ endowed with the anti-involution $\sigma|_{Y'}:Y'\to Y'$. If $x\in\pi(C_1)$, then $\an_{Y',x}=\an_{X,x}\otimes_\R\C=\an_{Y,x}$, so $Y'_x=Y_x$, which is a contradiction because $Y_1\neq Y_2$. Thus, each connected component of $\cl(\pi^{-1}(X)\setminus\widehat{X})$ meets $\widehat{X}$, as required.

(ii) \em $X$ is coherent if and only if $\pi^{-1}(X)\setminus\widehat{X}=\varnothing$\em. 

If $\pi^{-1}(X)\setminus\widehat{X}=\varnothing$, then $N(X)=\pi(\cl(\pi^{-1}(X)\setminus\widehat{X})\cap\widehat{X})=\varnothing$. Suppose $X$ is coherent and $\pi^{-1}(X)\setminus\widehat{X}\neq\varnothing$. By (i) each connected component of $\cl(\pi^{-1}(X)\setminus\widehat{X})$ meets $\widehat{X}$, so $N(X)=\pi(\cl(\pi^{-1}(X)\setminus\widehat{X})\cap\widehat{X})\neq\varnothing$, which is a contradiction. Thus, $\pi^{-1}(X)\setminus\widehat{X}=\varnothing$, as required.
\qed
\end{remarks}

\begin{cor}\label{dim12}
If $X$ is a non-coherent irreducible $C$-analytic space of dimension $2$, then $\widehat{X}$ is pure dimensional, $C:=\cl(\pi^{-1}(X)\setminus\widehat{X})$ is an invariant analytic set of real dimension $1$ such that $\pi^{-1}(X)=\widehat{X}\cup C$, $\widehat{X}\cap C$ is a discrete subset of $\widehat{X}$ and the irreducible components of $C$ have dimension $1$. In addition, $\pi(\widehat{X})=\cl(X\setminus\Sing(X))$ and $\cl(X\setminus\pi(\widehat{X}))\subset\pi(C)$.
\end{cor}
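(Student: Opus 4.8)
The plan is to reduce to an irreducible complexification and then to read off the structure of $C$ from the irreducible components of the real analytic set $\pi^{-1}(X)$ inside the normalization. Since $X$ is irreducible it admits a fundamental system of invariant irreducible complexifications, so I would assume the complexification $(Y,\an_Y)$ fixed in \S\ref{casai} is irreducible. Then its normalization $(\widehat{Y},\an_{\widehat{Y}})$ is irreducible and normal, hence reduced and pure dimensional of dimension $\dim\widehat{Y}=\dim Y=2$ (and, being a normal surface, it has isolated singularities). As $(\widehat{Y},\an_{\widehat{Y}})$ is a complexification of $(\widehat{X},\an_{\widehat{X}})$, we have $\an_{\widehat{Y},y}=\an_{\widehat{X},y}\otimes_\R\C$ for every $y\in\widehat{X}$, so $\dim\an_{\widehat{X},y}=\dim\an_{\widehat{Y},y}=2$; and since $X$, being irreducible, is pure dimensional, \S\ref{casai} gives $\pi(\widehat{X})=X=\cl(X\setminus\Sing(X))\neq\varnothing$. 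Hence $\widehat{X}$ is nonempty and pure of dimension $2$ (the first assertion), and $\cl(X\setminus\pi(\widehat{X}))=\varnothing\subset\pi(C)$ (the last assertion).

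Next I would record the elementary facts about $C$. The real part spaces $X\subset Y$ and $\widehat{X}\subset\widehat{Y}$ are closed real analytic subsets (locally, the zero sets of the imaginary parts of invariant holomorphic coordinates), so $\pi^{-1}(X)$ is a closed real analytic subset of $\widehat{Y}$ containing $\widehat{X}$. From $\pi\circ\widehat{\sigma}=\sigma\circ\pi$ and $\sigma(X)=X$ one gets $\widehat{\sigma}(\pi^{-1}(X))=\pi^{-1}(X)$, and as $\widehat{\sigma}(\widehat{X})=\widehat{X}$ the set $\pi^{-1}(X)\setminus\widehat{X}$ and its closure $C$ are invariant; since $\pi^{-1}(X)$ is closed, $C\subset\pi^{-1}(X)$ and thus $\pi^{-1}(X)=\widehat{X}\cup C$. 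For the dimension, note that if $y\in\pi^{-1}(X)$ with $\pi(y)\in\Reg(X)=X\cap\Reg(Y)$ then $\pi$ restricts to an isomorphism over $Y\setminus\Sing(Y)$, so $\pi^{-1}(\pi(y))=\{y\}$, and $\pi(\widehat{\sigma}(y))=\sigma(\pi(y))=\pi(y)$ forces $\widehat{\sigma}(y)=y$, i.e.\ $y\in\widehat{X}$; hence $\pi^{-1}(X)\setminus\widehat{X}\subset\pi^{-1}(\Sing(X))$, and since $\pi$ is finite and $\dim\Sing(X)\leq1$ we get $\dim C\leq\dim\pi^{-1}(\Sing(X))\leq1$.

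The core step is to identify $C$ with a union of irreducible components of $\pi^{-1}(X)$. Because $\widehat{X}$ is pure of dimension $2$ whereas $\pi^{-1}(X)\setminus\widehat{X}$ has dimension $\leq1$, every $2$-dimensional irreducible component $V$ of $\pi^{-1}(X)$ satisfies $V=\cl(V\setminus C)\subset\widehat{X}$, and conversely every point of $\widehat{X}$ has local dimension $2$ in $\pi^{-1}(X)$, hence lies in such a $V$; therefore $\widehat{X}$ is exactly the union of the $2$-dimensional components of $\pi^{-1}(X)$, and $C=\cl(\pi^{-1}(X)\setminus\widehat{X})$ is exactly the union of the remaining (at most $1$-dimensional) ones, so $C$ is a closed invariant analytic set whose irreducible components are precisely those of $\pi^{-1}(X)$ not contained in $\widehat{X}$. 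None of them is zero dimensional: such a component would be an isolated point of $\pi^{-1}(X)$, hence an isolated point of $C$ lying outside $\widehat{X}$, which is excluded by Remark~\ref{casd2}(i); and $C\neq\varnothing$ since $X$ is non-coherent, by Remark~\ref{casd2}(ii). Thus all irreducible components of $C$ have dimension $1$ and $\dim C=1$. Finally, $N(X)=\pi(C\cap\widehat{X})$ is a discrete subset of $X$ by \S\ref{casai2}, so $\pi^{-1}(N(X))$ is discrete in $\widehat{Y}$ (as $\pi$ has finite fibres) and contains $C\cap\widehat{X}$, whence $C\cap\widehat{X}$ is discrete in $\widehat{X}$. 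I expect the only delicate point to be this component bookkeeping — above all, justifying that $\pi^{-1}(X)$, $\widehat{X}$ and $\pi^{-1}(\Sing(X))$ may be treated as honest real analytic sets with the usual dimension theory and locally finite decomposition into irreducible components, and making precise the use of Remark~\ref{casd2}(i) to rule out zero-dimensional components of $C$; the remaining assertions are a routine assembly of the material in \S\ref{sing}, \S\ref{casai}, \S\ref{casai2} and Remark~\ref{casd2}.
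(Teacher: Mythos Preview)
Your proof is correct and follows essentially the same route as the paper's: both reduce to the fact that a normal complex surface has isolated singularities (hence $\widehat{X}$ is pure of dimension $2$), identify $C$ with the irreducible components of a suitable real analytic set that are not contained in $\widehat{X}$, and then invoke Remark~\ref{casd2}(i),(ii) to rule out isolated points and to guarantee $C\neq\varnothing$. The differences are only in bookkeeping: the paper takes components of $\pi^{-1}(\Sing(X))$ (which is already $\le1$-dimensional), while you take components of $\pi^{-1}(X)$ and first argue that $\widehat{X}$ absorbs all the $2$-dimensional ones; the paper proves $\pi(\widehat{X})=\cl(X\setminus\Sing(X))$ directly from properness, whereas you shortcut this by noting that an irreducible $C$-analytic space is pure dimensional (so $\pi(\widehat{X})=X$); and for the discreteness of $\widehat{X}\cap C$ the paper uses the immediate fact that each $1$-dimensional component of $C$ meets $\widehat{X}$ in a proper analytic subset, while you route it through $N(X)=\pi(\widehat{X}\cap C)$ and finiteness of the fibres. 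All of these variants are sound and your caveat about treating $\pi^{-1}(X)$, $\widehat{X}$, $\pi^{-1}(\Sing(X))$ as real analytic sets with locally finite irreducible decompositions is the right thing to flag, but it is standard.
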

\begin{proof}
As $(\widehat{Y},\an_{\widehat{Y}})$ is a normal complex analytic space of dimension $2$, the analytic set $\Sing(\widehat{Y})$ has dimension $0$, so $\Sing(\widehat{X})=\Sing(\widehat{Y})\cap\widehat{X}$ has also dimension $0$ and $\widehat{X}$ is pure dimensional. 

As $\pi$ has finite fibers and $\pi\circ\widehat{\sigma}=\sigma\circ\pi$, $\pi^{-1}(\Sing(X))$ is an analytic set of real dimension $\leq1$ and it is invariant. Let $C$ be the union of the irreducible components of $\pi^{-1}(\Sing(X))$ (as an analytic set of real dimension $1$) that are not contained in $\widehat{X}$. Then $C$ is invariant, $\pi^{-1}(X)=\widehat{X}\cup C$ and $\widehat{X}\cap C$ is a discrete set. As $\pi^{-1}(X\setminus\Sing(X))=\widehat{X}\setminus\pi^{-1}(\Sing(X))$,
$$
C=\cl(\pi^{-1}(\Sing(X))\setminus\widehat{X})=\cl(\pi^{-1}(X)\setminus\widehat{X}).
$$
As $X$ is non-coherent, $C\neq\varnothing$ (Remark \ref{casd2}(ii)). By Remark \ref{casd2}(i) each connected component of $C$ meets $\widehat{X}$, so $C$ has no isolated points. Thus, the irreducible components of $C$ have dimension $1$. As $\widehat{X}\setminus \pi^{-1}(\Sing(X))$ is dense in $\widehat{X}$ and $\pi:\widehat{X}\to X$ is proper,
$$
\pi(\widehat{X})=\cl(\pi(\widehat{X}\setminus \pi^{-1}(\Sing(X))))=\cl(X\setminus\Sing(X)).
$$
As $\pi^{-1}(X)=\widehat{X}\cup C$, we conclude $\cl(X\setminus\pi(\widehat{X}))\subset\pi(C)$, as required.
\end{proof}

\subsection{Multiplicity along an irreducible complex analytic curve}

Let $(Y,\an_Y)$ be a normal Stein space of pure dimension $2$. Recall that $\Sing(Y)$ is a discrete set. Let $Z\subset Y$ be an irreducible analytic subspace of dimension $1$ (recall that $\an_Z:=\an_Y/\Jj_Z$). Let us define the \em multiplicity ${\rm mult}_Z(F)$ along $Z$ \em of $F\in\an(Y)$ (see \cite{ad} and \cite[\S2]{adr} for similar results in the real analytic setting).

Fix a point $z\in Z\setminus\Sing(Y)$. As $\an_{Z,z}$ is a unique factorization domain, the ideal $\Jj_{Z,z}$ is principal, say $\Jj_{Z,z}=H_z\an_{Y,z}$ and $F_z=A_zH_z^{\alpha_z}$ for some $A_z\in\an_{Y,z}\setminus \Jj_{Z,z}$ and some integer $\alpha_Z\ge0$. This integer $\alpha_z$ is the \em multiplicity ${\rm mult}_Z(F)$\em. Let us show that this definition is consistent: 

\subsubsection{}\label{paso2} \em The number $\alpha_z$ does not depend on the chosen generator of $\Jj_{Z,z}$\em. 

Assume $F_z=B_z G_z^{\beta_z}$ for another generator $G_z$ of $\Jj_{Z,z}$ and $B_z\in\an_{Y,z}\setminus\Jj_{Z,z}$. Suppose 
$\alpha_z\le\beta_z$ and write $H_z=C_zG_z$ for some $C_z\in\an_{Y,z}\setminus\Jj_{Z,z}$. As $B_z G_z^{\beta_z}=A_z(C_zG_z)^{\alpha_z}$, we have $B_z G_z^{\beta_z-\alpha_z}=A_zC_z^{\alpha_z}\notin\Jj_{Y,z}$, so $\beta_z-\alpha_z=0$. 

\subsubsection{}\label{paso3} \em The number $\alpha_z$ is constant on some branch of the analytic curve germ $Z_z$\em. 

If $y\in Z$ is close to $z$, we have $F_y=A_yH_y^{\alpha_z}$ and $H_y$ is a generator of $\Jj_{Z,y}$ (recall that Stein spaces are coherent). As $A_z\notin\Jj_{Z,z}$, $A$ cannot vanish identically on all branches of $Z_z$, so that $A(y)\ne0$ for $y$ close to $z$ in the branches of $Z_z$ on which $A_z$ is not identically zero. We conclude $\alpha_y=\alpha_z$ for each $y\in Z$ close to $z$ such that $A(y)\neq0$.

\subsubsection{}\label{paso4} \em There exists a global analytic function $G\in\an(Y)$ that vanishes on $Z$ and whose germ at $z$ generates $\Jj_{Z,z}$ for each $z\in Z$ outside a discrete set $D\subset Z$\em. We may assume in addition that $\{G=0\}$ avoids any discrete subset $E$ of $Y$ that does not meet $Z$.

Pick $a\in Z\setminus(\Sing(Y)\cup\Sing(Z))$. By Cartan's Theorem A the ideal $\Jj_{Z,a}$ is generated by finitely many $F_i\in\an(Y)$ that vanish identically on $Z$. We may assume $F_{1,a}\in\Jj_{Z,a}\setminus\Jj_{Z,a}^2$. Define $G:=F_1$ and observe that $G_a=A_a H_a$ where $A_a\notin\Jj_{Z,a}$. By \S\ref{paso3} $a$ is isolated in $Z\cap\{A=0\}$, hence $G_y$ generates $\Jj_{Z,y}$ for each $y\in Z\setminus\{a\}$ close enough to $a$. Consider the coherent sheaf of ideals $\Ii:=(G\an_{Y}:\Jj_Z)$ of $\an_Y$. Observe that $\Ii_y=\an_{Y,y}$ if and only if $G_y$ generates $\Jj_{Z,y}$. Thus, the support 
$$
Z':=\supp(\an_Y/\Ii)=\{y\in Y:\,G_y \text{ does not generate } \Jj_{Z,y}\}
$$
is a closed analytic subspace of $Y$ that does not contain $a$. As $Z$ is irreducible, $D:=Z'\cap Z$ is a discrete set, so $G_z$ generates $\Jj_{Z,z}$ for each $z\in Z\setminus D$.

Let $E_1:=\{z\in E:\ G(z)=0\}$ and $E_2:=\{z\in E:\ G(z)\neq0\}$. By Cartan's Theorem B there exists an invariant holomorphic function $H\in\an(Y)$ such that $H|_Z=0$, $H(z)=1$ for each $z\in E_1$ and $H(z)=0$ for each $z\in E_2$. Then $G+H^2$ satisfies the required properties.

\subsubsection{}\label{paso5} 
Fix $x\in Z\setminus(\Sing(Y)\cup\Sing(Z)\cup D)$. We have $F_x=A_x G_x^{\alpha_x}$ with $A_x\notin\Jj_{Z,x}$. By \S\ref{paso3} $A_x$ is a unit for $z\in Z\setminus D$ close to $x$. Consequently, $\alpha_z=\alpha_x$, $G_z^{\alpha_x}|F_z$ and $F_z|G_z^{\alpha_x}$. Consider the coherent sheaf of ideals $\Jj:=(G^{\alpha_x}:F)\cap(F:G^{\alpha_x})$ of $\an_Y$. The support 
$$
Z'':=\supp(\an_Y/\Jj)=\{z\in Y:\, G_z^{\alpha_x}\!\not|\,F_z\text{ or } F_z\!\not|\,G_z^{\alpha_x}\}
$$
is an analytic subset of $Y$ that does not contain $x$, so $D':=Z\cap Z''$ is a discrete set because $Z$ is irreducible. Thus, $\alpha_z=\alpha_x$ for each $z\in Y\setminus(\Sing(Y)\cup D\cup D')$. 

\subsubsection{}\label{paso5b} 
If $x\in Z\setminus\Sing(Y)$, we conclude by \S\ref{paso3} and \S\ref{paso4} that $\alpha_x=\alpha_z$ for each $z$ close to $x$. Hence, by \S\ref{paso4} $\alpha_x=\alpha_z$ if $x,z\in Z\setminus\Sing(Y)$.

\subsubsection{}\label{paso6} Consequently, ${\rm mult}_Z(F)$ is well-defined and: \em If ${\rm mult}_Z(F)=\alpha$, there exists a discrete set $E$ such that $F_z/ G_z^{\alpha}$ is a unit for $z\in Z\setminus E$\em. If $F$ does not vanish identically at $Z$, we write ${\rm mult}_Z(F)=0$.

\subsubsection{}\label{paso7} 
\em It is possible to modify the analytic function $G$ constructed in \em \ref{paso4} \em to avoid any prescribed analytic subspace $Z'\subset Y$ of dimension $1$ that meets $Z$ in a discrete set\em. 

Assume we have constructed $H,F\in\an(Y)$ such that
\begin{itemize}
\item[(i)] $Z\subset\{H=0\}$ and $\{H=0\}\cap Z'$ is a discrete set.
\item[(ii)] $Z'\subset\{F=0\}$ and $\{F=0\}\cap Z$ is a discrete set.
\end{itemize}
Define $G':=FG+H^2$ and observe that $Z\subset\{G'=0\}$ and $Z'\cap\{G'=0\}=Z'\cap\{H=0\}$ is a discrete set. As ${\rm mult}_Z(H^2)\geq2$, we deduce ${\rm mult}_Z(G')=1$.

To finish we show how to construct $H$. The construction of $F$ is analogous. As $Z$ is a coherent analytic subspace, there exist by \cite{fr} finitely many $H_i\in\an(Y)$ such that $Z=\{H_1=0,\ldots,H_s=0\}$. For each irreducible component $Z_j'$ of $Z'$ of dimension $1$ we choose one of its points $z_j\not\in Z$. The set $\{z_j\}_{j\geq1}$ constitutes a discrete subset of $Z'$. For each $j$ consider the linear equation
$$
\x_1H_1(z_j)+\cdots+\x_sH_s(z_j)=0,
$$
which is not identically zero because $z_j\not\in Z=\{H_1=0,\ldots,H_s=0\}$. Thus, $L_j:=\{\x_1H_1(z_j)+\cdots+\x_sH_s(z_j)=0\}$ is a hyperplane for each $j$ and $\C^\s\setminus\bigcup_{j\geq1}L_j\neq\varnothing$. Let $\lambda:=(\lambda_1,\cdots,\lambda_s)\in\C^\s\setminus\bigcup_{j\geq1}L_j$. Then, $H:=\lambda_1H_1+\cdots+\lambda_sH_s$ satisfies $Z\subset\{H=0\}$ and $\{H=0\}\cap Z'=\varnothing$.

\section{Proof of Theorem \ref{h17}}\label{s3}

Let $(Y,\an_Y)$ be a reduced complex analytic space endowed with an anti-involution $\sigma:Y\to Y$. Consider the $C$-analytic space $(X:=Y^\sigma,\an_X)$. Let $C\subset Y$ be an invariant analytic space of real dimension $1$ without isolated points. Assume $C$ is a closed subset of $Y$ and $D:=X\cap C$ is a discrete set. We write $C=\bigcup_{i\geq1} C_i$, where the $C_i$ are the irreducible components of $C$ (as an analytic space of real dimension $1$). Recall that $C_i$ is isomorphic either to $\R$ or $\sph^1$ via an analytic parameterization (use normalization). In the second case we compose with $\varphi:\R\to\sph^1,\ t\mapsto(\cos(t),\sin(t))$. Thus, for each $i\geq1$ there exists a surjective analytic map $\alpha_i:\R\to C_i$. 

\subsection{Eliminating one dimensional zero-sets from the real part space}
We assume in the following that $X\cup C$ has a system of open invariant Stein neighborhoods in $Y$. A paracompact locally connected and locally compact space $Y$ with countably many connected components (which are clopen subsets of $X$) has an exhaustion by compact sets, that is, there exists a family of compact sets $\{K_m\}_{m\geq1}$ such that $X=\bigcup_{m\geq1}K_m$ and $K_m\subset\Int(K_{m+1})$ for each $m\geq1$. Thus, reduced complex analytic spaces admit exhaustions by compact sets.

\begin{lem}\label{dim1}
Assume $(Y,\an_Y)$ is a normal Stein space of dimension $2$ and let $F\in\an(Y)$ be an invariant holomorphic function such that $F|_X\geq0$ and $\{F|_X=0\}$ has dimension $\leq1$. Then there exist invariant holomorphic functions $G,H_1,H_2\in\an(Y)$ such that $\{H_i|_X=0\}$ is a discrete set contained in $\{F|_X=0\}$, $H_i|_X\geq0$ and $H_1^2F=(G^2+F^2)H_2$.
\end{lem}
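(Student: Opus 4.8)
The plan is to reduce the function $F$, whose real zero-set has dimension $\le 1$, to a function whose real zero-set is discrete, while keeping positivity on $X$ and controlling the "denominator" introduced. First I would examine the invariant analytic curve $Z:=\{F|_X=0\}$ (more precisely, its Zariski closure inside a suitable one-dimensional invariant analytic subset of $Y$). Since $F|_X\ge 0$ vanishes on $Z$, along each irreducible component $Z_i$ of $Z$ the function $F$ must vanish to \emph{even} order: if the multiplicity ${\rm mult}_{Z_i}(F)$ were odd, then $F$ would change sign transversally across the real points of $Z_i$, contradicting $F|_X\ge 0$ on a neighborhood. (Here I would use that $Y$ is normal of dimension $2$, so $\Sing(Y)$ is discrete, and the smooth real points of $Z_i$ are dense in $Z_i\cap X$; near such a point $F$ looks like a unit times a power of a local equation of $Z_i$, and positivity forces the power to be even.) So on each $Z_i$ we may write $F = A_i\, G_i^{2k_i}$ locally, with $A_i$ a nonvanishing germ along $Z_i$ off a discrete set.

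Next, using the globalization machinery of \S\ref{paso4}–\S\ref{paso7}, I would produce a \emph{global} invariant holomorphic function $G\in\an(Y)$ that vanishes on $C$ (hence on $Z$) and whose germ at each point of $Z\setminus(\text{discrete set})$ generates $\Jj_{Z,z}$; by \S\ref{paso7} we may also arrange $\{G=0\}\cap X$ to be as small as possible — ideally we want $\{G|_X = 0\}$ to be a discrete subset of $\{F|_X=0\}$, which is exactly the hypothesis ``$\{H_i|_X=0\}$ is a discrete set contained in $\{F|_X=0\}$'' in the statement. Then along $Z$ the quotient $F/G^{2k}$ (with $k$ chosen to match the multiplicity, e.g. $k=1$ after the \S\ref{paso7} trick that forces ${\rm mult}_Z(G')=1$, so one takes $G^2$ and the even order of $F$ along $Z$) is a unit off a discrete set. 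The identity to target, $H_1^2 F = (G^2 + F^2) H_2$, suggests setting $H_1 := G$ (or a product of such functions over the components of $C$, made invariant and nonnegative on $X$ by replacing $G$ with $\Re(G)$ or by squaring/combining as in the proof of Theorem \ref{fa}) and then \emph{defining} $H_2 := \dfrac{G^2 F}{G^2 + F^2}$, the point being to show this meromorphic quotient is in fact holomorphic on all of $Y$: away from $\{G=0\}$ it is clearly holomorphic, and along $Z\subset\{G=0\}$, since ${\rm mult}_Z(F)\ge 2 = {\rm mult}_Z(G^2)$ while ${\rm mult}_Z(G^2+F^2)=\min=2$ (as $F^2$ has multiplicity $\ge 4$), the numerator $G^2F$ is divisible by $G^2+F^2$ in $\an_{Y,z}$ for $z$ in $Z$ off a discrete set, and then a Cartan's Theorem B / normality argument patches the remaining discrete bad set — here one uses that $Y$ is normal so the indeterminacy, being of codimension $\ge 2$, is removable. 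One must also check $H_2|_X\ge 0$, which is immediate since $G^2F/(G^2+F^2)$ is a nonnegative real number wherever defined on $X$, and that $\{H_2|_X=0\}$ is discrete and inside $\{F|_X=0\}$ (its real zeros occur only where $F|_X=0$, and off $Z$ only where $G|_X=0$, both discrete by construction).

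The main obstacle I expect is the \emph{holomorphic extension of $H_2 = G^2F/(G^2+F^2)$ across the discrete exceptional set} — the points of $\Sing(Y)$, the discrete set $D$ from \S\ref{paso4} where $G$ fails to generate $\Jj_{Z,z}$, and the points where the various auxiliary constructions of \S\ref{paso5}–\S\ref{paso7} break down. Off these points the divisibility is the clean multiplicity computation above; at these points one needs Riemann-type extension, which is available precisely because $(Y,\an_Y)$ is normal of dimension $2$ (a bounded or even meromorphic function holomorphic outside a discrete set on a normal space of dimension $\ge 2$ extends holomorphically). A secondary technical point is ensuring everything can be done \emph{invariantly}: each application of Cartan's Theorem B must produce an invariant section, which is standard (average with $\ol{(\cdot)}\circ\sigma$, or work with the sheaf $\an_Y^\sigma$ as in \S\ref{complexification}), and the final $H_1,H_2$ must be arranged to satisfy $H_i|_X\ge 0$, which one gets by passing from $G$ to $G^2$ throughout (consistent with the squared form of the identity) and from $F$ to $F$ itself since $F|_X\ge 0$ by hypothesis.
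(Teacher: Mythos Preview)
Your overall strategy---exploit that $F$ vanishes to even order $2m_j$ along each irreducible component $Z_j$ of the complexification $Z$ of the curve part of $\{F|_X=0\}$, then build a global invariant $G$ with ${\rm mult}_{Z_j}(G)=m_j$---matches the paper's opening move. But your proposed definition $H_2:=G^2F/(G^2+F^2)$ does \emph{not} give a holomorphic function on $Y$, and the obstruction is not the discrete set you anticipate. In the complex space $Y$ one has $G^2+F^2=(G+\sqrt{-1}\,F)(G-\sqrt{-1}\,F)$, so $\{G^2+F^2=0\}$ contains the hypersurfaces $\{G=\pm\sqrt{-1}\,F\}$; at a generic point of either of these neither $G$ nor $F$ vanishes, hence $G^2F\neq 0$ there and $H_2$ acquires an honest pole of order one. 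This pole locus is one-dimensional, not discrete, so normality and Riemann extension are of no help. Your assertion ``away from $\{G=0\}$ it is clearly holomorphic'' is precisely where the argument breaks: it is true on $X$ (where $G^2+F^2>0$ off $\{G=F=0\}$) but false on $Y$.

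The paper avoids this by reversing the direction of the division: rather than asking $G^2+F^2$ to divide $G^2F$, it arranges for $F$ to divide $A(G^2+F^2)$ for a suitable $A$. One introduces the conductor sheaf $\Gg:=(F:G^2+F^2)\an_Y$; because ${\rm mult}_{Z_j}(G^2+F^2)=2m_j={\rm mult}_{Z_j}(F)$, the support of $\an_Y/\Gg$ meets $X$ only in a discrete subset of $\{F|_X=0\}$. Taking invariant global generators $A_\ell$ of $\Gg$ (suitably rescaled) and setting $A:=\sum_\ell A_\ell^2$, one obtains $A|_X\ge 0$ with discrete real zero-set inside $\{F|_X=0\}$, and $A':=A(G^2+F^2)/F$ is genuinely holomorphic on $Y$ since each $A_\ell(G^2+F^2)\in F\,\an(Y)$ by construction. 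Then $H_1:=A'$ and $H_2:=A'A$ yield $H_1^2F=A'\cdot A(G^2+F^2)=(G^2+F^2)H_2$ with the required positivity and zero-set controls. The essential point is that the only denominator ever introduced is $F$, whose real zero-set is what we already control; one never divides by $G^2+F^2$. Note also that your side remark about forcing $H_1|_X\ge 0$ by ``squaring'' $G$ would destroy the identity; in the paper this positivity comes for free because $H_1=A'$ is a quotient of nonnegative quantities on $X$.
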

\begin{proof}
Let $Z\subset Y$ be the complexification of the $1$-dimensional part of $\{F|_X=0\}$ and $\{Z_j\}_{j\geq1}$ the collection of the irreducible components of $Z$ (as a complex analytic space). For each $j\geq1$ the intersection $Z_j\cap X$ has dimension $1$ (so $Z_j$ is in particular invariant). Thus, ${\rm mult}_{Z_j}(F)=2m_j$ for some $m_j\geq1$. As the family $\{Z_j\}_{j\geq1}$ is locally finite, $\Jj_{Z_j,z}=\an_{Y,z}$ for each $z\in Y$ and each $j\geq1$ except for at most finitely many indices. Consider the coherent sheaf of ideals $\Ff:=\prod_{j\geq1}\Jj_{Z_j}^{m_j}$ of $\an_{Y}$. Let $\{K_\ell\}_{\ell\geq1}$ be an exhaustion of $Y$ by compact sets. We may assume $Z_\ell\cap\Int(K_\ell)\neq\varnothing$ and pick a point 
$$
x_\ell\in Z_\ell\setminus\Big(\bigcup_{k\neq\ell}Z_k\cup\Sing(Z_\ell)\cup\Sing(Y)\Big). 
$$
Using Cartan's Theorem A and the previous exhaustion we find a countable system $\{G_\ell\}_{\ell\geq1}\subset\an(Y)$ of global generators of $\Ff$. We reorder the indices $\ell$ in such a way that ${\rm mult}_{Z_\ell}(G_\ell)=m_\ell$ and $G_{\ell,x_\ell}\in\Jj_{Z_\ell,\x_\ell}^{m_\ell}\setminus\Jj_{Z_\ell,\x_\ell}^{m_\ell+1}$. Observe that $G_{k,x_\ell}\in\Jj_{Z_\ell,\x_\ell}^{m_\ell}$ for each $k,\ell\geq1$. Define $\mu_\ell:=\max_{K_\ell}\{|G_\ell|\}+1$ and substitute $G_\ell$ by $\frac{1}{2^\ell\mu_\ell}G_\ell$ in order to have $\max_{K_\ell}\{|G_\ell|\}<\frac{1}{2^\ell}$. Let us show how to pick suitable values $0<\lambda_\ell<1$ such that: \em $G:=\sum_{\ell\geq1}\lambda_\ell G_\ell$ satisfy ${\rm mult}_{Z_k}(G)=m_k$ for each $k\geq1$ and $\{F=0\}\cap\{G=0\}\cap X\subset Z$\em. Thus, ${\rm mult}_{Z_\ell}(G^2+F^2)=2m_\ell$ for each $\ell\geq1$.

Let us choose $(\lambda_\ell)_{\ell\geq1}\in\Lambda:=\prod_{\ell\geq1}(0,1)$ to guarantee $G_{x_k}\in\Jj_{Z_k,\x_k}^{m_k}\setminus\Jj_{Z_k,\x_k}^{m_k+1}$ for each $k\geq1$. Write $G_{\ell,x_k}=G_{k,x_k}^{m_k}\zeta_{\ell,x_k}$, where $\zeta_{\ell,x_k}\in\an_{Y,\x_k}$ is an invariant holomorphic germ for each $\ell\geq1$ and $\zeta_{k,x_k}=1$. Observe that $G_{x_k}=G_{k,x_k}^{m_k}(\sum_{\ell\geq1}\lambda_\ell\zeta_{\ell,x_k})$. Consequently, we need $\sum_{\ell\geq1}\lambda_\ell\zeta_{\ell,x_\ell}(x_k)\neq0$ for each $k\geq1$. Define $\Ss_k:=\{(\lambda_\ell)_{\ell\geq1}\in\Lambda:\ \sum_{\ell\geq1}\lambda_\ell\zeta_{\ell,x_\ell}(x_k)=0\}$.

As $D:=\{F=0\}\cap(X\setminus Z)=\{y_m\}_{m\geq1}$ is a discrete set, to have $\{F=0\}\cap\{G=0\}\cap X\subset Z$ it is enough to choose $(\lambda_\ell)_{\ell\geq1}\in\Lambda$ in such a way that $G(y_m)=\sum_{\ell\geq1}\lambda_\ell G_\ell(y_m)\neq0$ for each $m\geq1$. Define $\Tt_m:=\{(\lambda_\ell)_{\ell\geq1}\in\Lambda:\ \sum_{\ell\geq1}\lambda_\ell G_\ell(y_m)=0\}$.

Consequently, it is enough to pick $(\lambda_\ell)_{\ell\geq1}\in\Lambda\setminus(\bigcup_{k\geq1}\Ss_k\cup\bigcup_{m\geq1}\Tt_m)$.

Consider the coherent sheaf of ideals $\Gg:=(F:G^2+F^2)\an_Y$ whose stalks are
$$
\Gg_z=\{\xi_z\in\an_{Y_z}:\ \xi_z(G^2_z+F^2_z)\in F_z\an_{Y_z}\}.
$$
We have $\Gg_z=\an_{Y_z}$ if and only if $F_z$ divides $G^2_z+F^2_z$. The support of the coherent sheaf $\an_Y/\Gg$ meets $X$ in the discrete set of points $z\in X$ at which $F_z$ does not divide $G^2_z+F^2_z$. 

Using Cartan's Theorem A and the exhaustion $\{K_\ell\}_{\ell\geq1}$ we find a countable system $\{A_\ell\}_{\ell\geq1}\subset\an(Y)$ of (global) invariant generators of $\Gg$. Define $\rho_\ell:=\max_{K_\ell}\{|A_\ell|\}+1$ and substitute $A_\ell$ by $\frac{1}{2^\ell\rho_\ell}A_\ell$ in order to have $\max_{K_\ell}\{|A_\ell|\}<\frac{1}{2^\ell}$. Define $A:=\sum_{\ell\geq1}A_\ell^2$, which is an invariant holomorphic function such that $A|_X\geq0$ and it has a discrete zero-set contained in $\{F|_X=0\}$. Observe that $A':=A(G^2+F^2)/F\in\an(Y)$ is an invariant holomorphic function such that $A'|_X\geq0$ and it has a discrete zero-set contained in $\{F|_X=0\}$. Thus, $A'^2F=A'A(G^2+F^2)$ and it is enough to take $H_1:=A'$ and $H_2:=A'A$.
\end{proof}

\subsection{Eliminating one dimensional zero-sets from the imaginary part}
In view of Lemma \ref{dim1} it is enough to represent invariant holomorphic functions $F\in\an(Y)$ as sums of squares of meromorphic functions such that $F|_X\geq0$ and $\{F|_X=0\}$ is a discrete set. Next, we prove that that we may assume in addition that $F|_C$ is also a discrete set. Before we state such result we propose the following preliminary lemma.

\begin{lem}\label{red0}
Assume $(Y,\an_Y)$ is a Stein space and let $F,G\in\an(Y)$ be invariant holomorphic functions. Let $\{y_m\}_{m\geq1}$ be a discrete set such that $\ol{G}(y_m)F(y_m)\neq0$ for each $m\geq1$. Let $\{a_m\}_{m\geq1}\subset\C$ be such that $a_m\in\R$ if $\sigma(y_m)=y_m$. Then there exists an invariant holomorphic function $A\in\an(Y)$ with empty zero-set such that $(\ol{G}FA^2)(y_m)=a_m$ for each $m\geq1$.
\end{lem}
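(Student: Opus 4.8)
The plan is to reinterpret the statement as an interpolation problem: we must produce an invariant, nowhere-vanishing $A\in\an(Y)$ with $A(y_m)^2=c_m$, where $c_m:=a_m/(\ol{G}(y_m)F(y_m))$, and the nowhere-vanishing will be achieved by writing $A=\exp(C)$ for a suitable invariant holomorphic $C$. Observe first that each $c_m$ is well defined because $\ol{G}(y_m)F(y_m)\neq0$, and that when $\sigma(y_m)=y_m$ the factors $\ol{G}(y_m)$, $F(y_m)$ and $a_m$ are all real, so $c_m$ is a nonzero real number, positive by the sign hypothesis on the $a_m$; this positivity is exactly what will permit the later choice of a real square root and a real logarithm at the $\sigma$-fixed points. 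Enlarging the family $\{y_m\}_{m\geq1}$ (which is harmless, since $(\ol{G}FA^2)(\sigma(y))=\ol{(\ol{G}FA^2)(y)}$ for every invariant $A$, so the data necessarily satisfies $a_{\sigma(y_m)}=\ol{a_m}$), we may assume $\{y_m\}_{m\geq1}$ is $\sigma$-invariant with $c_{\sigma(y_m)}=\ol{c_m}$.

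Next I would fix, for each $m$, a square root $b_m$ of $c_m$ and then a logarithm $\ell_m$ of $b_m$, chosen compatibly with $\sigma$: when $\sigma(y_m)=y_m$ take $b_m>0$ and $\ell_m\in\R$ (possible as $c_m>0$), and otherwise set $b_{\sigma(y_m)}=\ol{b_m}$ and $\ell_{\sigma(y_m)}=\ol{\ell_m}$; note $b_m\neq0$ and $\exp(\ell_m)=b_m$. Since $\{y_m\}_{m\geq1}$ is a closed discrete subset of the Stein space $Y$, the sheaf $\Ii\subset\an_Y$ of holomorphic functions vanishing at all the $y_m$ is coherent and $\an_Y/\Ii$ is supported on $\{y_m\}_{m\geq1}$ with stalk $\C$ at each $y_m$; by Cartan's Theorem B the restriction $\an(Y)\to H^0(Y,\an_Y/\Ii)=\prod_{m\geq1}\C$ is surjective, so there is $C_0\in\an(Y)$ with $C_0(y_m)=\ell_m$ for every $m$.

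To make the interpolant invariant I would replace $C_0$ by its symmetrization $C:=\tfrac12\bigl(C_0+\ol{C_0}\circ\sigma\bigr)$, the obvious analogue of the operator $F\mapsto\Re(F)$ from \S\ref{s2}: this $C$ is holomorphic (the composition of the antiholomorphic $\ol{C_0}$ with the antiholomorphic $\sigma$ is holomorphic), satisfies $\ol{C}\circ\sigma=C$, and still interpolates, since $C(y_m)=\tfrac12(\ell_m+\ol{\ell_{\sigma(y_m)}})=\ell_m$ by our compatibility. Finally set $A:=\exp(C)\in\an(Y)$. Then $\{A=0\}=\varnothing$; $A$ is invariant because $\ol{A}\circ\sigma=\exp(\ol{C}\circ\sigma)=\exp(C)=A$; and $A(y_m)=\exp(\ell_m)=b_m$, so $(\ol{G}FA^2)(y_m)=\ol{G}(y_m)F(y_m)\,b_m^2=\ol{G}(y_m)F(y_m)\,c_m=a_m$, as required.

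The interpolation via Theorem B and the symmetrization are routine in this framework; the only point requiring care is the bookkeeping of the square roots $b_m$ and logarithms $\ell_m$ so that they are genuinely $\sigma$-compatible (real at the fixed points, conjugate along the $\sigma$-orbits elsewhere). Passing through the exponential is what reconciles the two demands on $A$ — prescribed values at a discrete set and empty zero-set — because we only need a holomorphic function taking prescribed values at discrete points, not a global holomorphic logarithm of a given function, so no monodromy obstruction on $Y$ plays any role.
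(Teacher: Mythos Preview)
Your argument is essentially the paper's: choose complex logarithms $b_m$ with $\exp(2b_m)=a_m/(\ol{G}(y_m)F(y_m))$ (you factor this through a square root and then a logarithm, which is cosmetic), interpolate at the $y_m$ and their $\sigma$-images via Cartan's Theorem~B, symmetrize to make the interpolant invariant, and set $A=\exp(\cdot)$.

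One correction, though: you justify $c_m>0$ at the $\sigma$-fixed points by invoking ``the sign hypothesis on the $a_m$'', but the lemma as stated carries no such hypothesis --- it only requires $a_m\in\R$ when $\sigma(y_m)=y_m$, and it does not require $a_m\neq 0$ either. In fact the lemma is literally false without these extra assumptions: if some $a_m=0$ then $(\ol{G}FA^2)(y_m)=0$ forces $A(y_m)=0$, contradicting the empty zero-set; and if $\sigma(y_m)=y_m$ with $c_m<0$, then invariance gives $A(y_m)\in\R$, whence $A(y_m)^2\geq 0\neq c_m$. The paper's own proof glides over exactly the same point (it writes ``we may assume $b_m\in\R$'' once $c_m\in\R$, which already needs $c_m>0$), and in every later application the relevant values are nonzero and positive at fixed points, so nothing breaks in context. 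Still, you should flag the missing hypotheses explicitly rather than cite one that is not there.
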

\begin{proof}
Let $b_m\in\C$ be such that $\exp(2b_m)=\frac{a_m}{\ol{G}(y_m)F(y_m)}$. Observe that 
$$
\exp(2\ol{b_m})=\ol{\exp(2b_m)}=\frac{\ol{a_m}}{\ol{G}(\sigma(y_m))F(\sigma(y_m))}.
$$
If $\sigma(y_m)=y_m$, then $F(y_m)=\ol{F}(y_m)$ and $G(y_m)=\ol{G}(y_m)$, so
$$
\frac{\ol{a_m}}{\ol{G}(\sigma(y_m))F(\sigma(y_m))}=\frac{a_m}{G(y_m)F(y_m)}\in\R
$$
and we may assume $b_m\in\R$. By Cartan's Theorem B there exists $B\in\an(Y)$ such that $B(y_m)=b_m$ and $B(\sigma(y_m))=\ol{b_m}$ for each $m\geq1$. If we substitute $B$ by $\Re(B)=\frac{B+\ol{B\circ\sigma}}{2}$, we may assume that $B$ is invariant and $B(y_m)=b_m$ for each $m\geq1$. Thus, $\exp(B)$ is invariant and $(\exp(B)^2\ol{G}F)(y_m)=a_m$ for each $m\geq1$, so it is enough to take $A:=\exp(B)$.
\end{proof}

\begin{thm}\label{movc}
Let $F\in\an(Y)$ be an invariant function such that $F|_X$ is positive semidefinite and its zero-set is a discrete set. Then, after shrinking $Y$ if necessary, there exists $G\in\an(Y)$ such that the restriction of $F-G^2$ to $X$ is positive semidefinite, $\{F-G^2=0\}\cap (X\cup C)$ is a discrete set and $\{F-G^2=0\}\cap X=\{F=0\}\cap X$.
\end{thm}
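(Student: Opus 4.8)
The plan is to look for $G$ of the form $G:=BH$, where $B\in\an(Y)$ is a nowhere vanishing invariant holomorphic function that is uniformly small on $X$ (provided by Theorem \ref{fa}(ii)) and $H\in\an(Y)$ is an invariant holomorphic function that vanishes to a suitably high order along the discrete set $Z_0:=\{F|_X=0\}$ while being nonzero at one point of each ``bad'' component of $C$. First I would replace $Y$ by an invariant open Stein neighbourhood of $X\cup C$ (this is the shrinking allowed in the statement), so that Cartan's Theorem B and Theorem \ref{fa} apply; since $\{F|_X=0\}$ is discrete, $F$ does not vanish identically on any positive dimensional irreducible component of $X$, so $F|_X\geq0$ vanishes exactly on $Z_0$. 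For each $i\geq1$ I would pick a point $p_i\in C_i\setminus X$, which is possible because $C_i\cap X\subset D$ is discrete and $\dim C_i=1$, so $C_i\setminus X$ is dense in $C_i$; using an exhaustion of $Y$ and the local finiteness of $\{C_i\}$ I would arrange that $\{p_i\}_{i\geq1}\cup\{\sigma(p_i)\}_{i\geq1}$ is a closed discrete subset of $Y$ with $p_i\neq\sigma(p_j)$ for all $i,j$. Call $i$ \emph{bad} if $F$ vanishes identically on $C_i$ and \emph{good} otherwise; when $i$ is good I would moreover choose $p_i$ with $F(p_i)\neq0$, which is again possible since $\{F|_{C_i}=0\}$ is then a proper, hence discrete, analytic subset of the irreducible curve $C_i$.

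Next I would build $H$. For each (isolated) point $x_0\in Z_0$ the {\L}ojasiewicz inequality applied to the nonnegative analytic function $F|_X$ yields a constant $c_{x_0}>0$ and a positive integer $N_{x_0}$ with $F(x)\geq c_{x_0}\|x-x_0\|^{N_{x_0}}$ for $x\in X$ near $x_0$; fixing an integer $m_{x_0}>N_{x_0}/2$, any holomorphic germ in $\gtm_{Y,x_0}^{m_{x_0}}$ satisfies $|H|\leq\tfrac12\sqrt{F}$ on some neighbourhood of $x_0$ in $X$. By Cartan's Theorem B on the Stein space $Y$ I would then choose $H\in\an(Y)$ with $H_{x_0}\in\gtm_{Y,x_0}^{m_{x_0}}$ for every $x_0\in Z_0$, with $H(p_i)=1$ for bad $i$ and $H(p_i)=0$ for good $i$, together with the conjugate conditions at the $\sigma(p_i)$; since all these data are $\sigma$-symmetric and supported on a closed discrete set, $H$ may be taken invariant (replace it by $\Re(H)$). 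In particular there is an open neighbourhood $V$ of $Z_0$ in $X$ on which $H^2\leq\tfrac12 F$.

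Then I would choose $B$. The function $\veps:X\to\R$ given by $\veps:=\min\{1,\sqrt{F/(2H^2)}\}$ where $H\neq0$ and $\veps:=1$ where $H=0$ is strictly positive and continuous on $X$ (continuity at the zeros of $H$ is immediate, since there $F>0$ unless the point lies in $Z_0\subset V$, where $\veps\equiv1$ anyway). Theorem \ref{fa}(ii) provides an invariant $B\in\an(Y)$ with $0<B|_X<\veps$ and $\{B=0\}=\varnothing$, and I set $G:=BH$. On $X$ one has $B^2H^2<\tfrac12 F\leq F$ wherever $H\neq0$ and $B^2H^2=0\leq F$ wherever $H=0$, with equality $B^2H^2=F$ holding only where both $H=0$ and $F=0$, that is only on $Z_0$ (note $Z_0\subset\{H=0\}$); hence $(F-G^2)|_X$ is positive semidefinite and $\{F-G^2=0\}\cap X=\{F=0\}\cap X$. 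On $C$, for bad $i$ we get $(F-G^2)(p_i)=-B(p_i)^2\neq0$, and for good $i$ we get $(F-G^2)(p_i)=F(p_i)-B(p_i)^2H(p_i)^2=F(p_i)\neq0$; in either case $F-G^2$ is not identically zero on the irreducible curve $C_i$ (parameterized by $\R$, resp. $\sph^1$), so $\{F-G^2=0\}\cap C_i$ is discrete, and by the local finiteness of $\{C_i\}$ together with the discreteness of $Z_0$ the set $\{F-G^2=0\}\cap(X\cup C)$ is a closed discrete subset of $Y$.

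I expect the delicate point to be the construction of $H$ near $Z_0$: $G$ is forced to vanish on $Z_0$ but must be nonzero at the points $p_i$ sitting on the bad components, so $G$ cannot be taken to be a multiple of $F$, and then the requirement $F-G^2\geq0$ near $Z_0$ becomes the genuine (not merely formal) estimate $G^2\leq F$ there — which is exactly what the {\L}ojasiewicz inequality, combined with the order prescription for $H$, is meant to furnish. The remaining ingredients — the discrete choice of the $p_i$, the $\sigma$-symmetry of all prescriptions, and the continuity of $\veps$ — are routine.
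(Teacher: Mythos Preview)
Your argument is correct. Both you and the paper rely on the {\L}ojasiewicz inequality to control $G$ near the isolated zeros $x_k\in\{F|_X=0\}$ and on an interpolation/genericity step to guarantee that $F-G^2$ is not identically zero on any $C_i$; the difference lies in how the global $G$ is assembled. The paper builds, for each $x_k$, a local invariant function $G_k=(\sum\lambda_jG_{kj}^2)^{s_k}$ out of generators of $\gtm_{x_k}$, then patches these via a locally principal coherent sheaf and Coen's bound on global generators, and finally tames the resulting global section by an explicit damping factor $\exp(-1-AG')$ combined with a small invariant $H$ coming from Theorem~\ref{fa}; a last scalar $\lambda\in(0,1)$ is chosen to avoid the countably many accidental equalities $F(y_i)=G''(y_i)^2$ on $C$. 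You instead prescribe directly, via Cartan's Theorem~B, the jets $H_{x_0}\in\gtm_{x_0}^{m_{x_0}}$ together with the values $H(p_i)\in\{0,1\}$ at one point per component of $C$, and then invoke Theorem~\ref{fa}(ii) once, applied to the single continuous weight $\veps=\min\{1,\sqrt{F/(2H^2)}\}$, to produce the small nowhere-vanishing $B$. Your route is shorter and bypasses the sheaf-gluing and the exponential formula; the paper's route yields an explicit closed expression for $G$ and makes visible which local data (the $G_{kj}$ and the exponents $s_k$) enter. The good/bad dichotomy you introduce is a neat device: setting $H(p_i)=0$ on good components forces $(F-G^2)(p_i)=F(p_i)\neq0$ without any further genericity choice, whereas the paper handles all components uniformly and pays for this with the final choice of the scalar $\lambda$.
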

\begin{proof}
Write $\{F=0\}\cap X=\{x_k\}_{k\geq1}$. As $(Y,\an_Y)$ is a Stein space, there exist invariant holomorphic functions $G_{k1},\ldots,G_{km_k}\in\an(Y)$ such that the maximal ideal $\gtm_{x_k}$ of $\an_{Y,x_k}$ is generated by the invariant holomorphic function germs $G_{k1,x_k},\ldots,G_{km_k,x_k}$ and $\{G_{k1}=0,\ldots,G_{km_k}=0\}=\{x_k\}$. The zero-set of $G_{k1}^2+\cdots+G_{km_k}^2$ in $X$ is $\{x_k\}$. By Lojasiewicz's inequality there exist an integer $s_k\geq1$ and a compact neighborhood $K_k\subset X$ of $x_k$ such that
$$
(G_{k1}^2+\cdots+G_{km_k}^2)^{4s_k}<F
$$ 
on $K_k\setminus\{x_k\}$. If $\lambda:=(\lambda_1,\ldots,\lambda_{m_k})\in(0,1)^{m_k}$, then
$$
(\lambda_1G_{k1}^2+\cdots+\lambda_{m_k}G_{km_k}^2)^{4s_k}<(G_{k1}^2+\cdots+G_{km_k}^2)^{4s_k}<F
$$ 
on $K_k\setminus\{x_k\}$.

Let $\{C_i\}_{i\geq1}$ be the collection of the irreducible components of $C$ as an analytic set of real dimension $1$. Pick points $y_i\in C_i\setminus X\cup\bigcup_{i\neq\ell}C_\ell$. As the family $\{C_i\}_i$ is locally finite, $\{y_i\}_{i\geq1}$ is a discrete set. For each $i,k\geq1$ consider 
$$
H_{ik}:\ \x_1G_{k1}^2(y_i)+\cdots+\x_{m_k}G_{km_k}^2(y_i)=0.
$$
The previous linear equation is not identically zero because $\{G_{k1}=0,\ldots,G_{km_k}=0\}=\{x_k\}$, so $H_{ik}$ is a hyperplane of $\R^{m_k}$. Thus, $(0,1)^{m_k}\setminus\bigcup_{j=1}H_{ik}$ is not empty and we chose $\lambda:=(\lambda_1,\ldots,\lambda_{m_k})\in(0,1)^{m_k}\setminus\bigcup_{i\geq1}H_{ik}$. Define the invariant holomorphic function
$$
G_k:=(\lambda_1G_{k1}^2+\cdots+\lambda_{m_k}G_{km_k}^2)^{s_k}.
$$ 
Observe that $D_k:=(\{G_k=0\}\cap C)\setminus\{x_k\}$ is a discrete set (because $G_k(y_i)\neq0$, so $G_k|_{C_i}\neq0$ for each $i\geq1$) and $\{G_k=0\}\cap X=\{x_k\}$. As $D_k\cup\{x_k\}$ is a discrete set, there exist open neighborhoods $U_k\subset Y$ of $x_k$ and $V_k\subset Y$ of $D_k$ such that $\cl(U_k)\cap \cl(V_k)=\varnothing$. Observe that $\cl(\{G_k=0\}\cap U_k)\cap D_k=\varnothing$, so $\cl(\{G_k=0\}\cap U_k)\cap C\subset\{x_k\}$. 

As $\{x_k\}_k\subset X$ is a discrete set, we may assume in addition $U_k$ is invariant, $\cl(U_k)\cap\cl(U_{k'})=\varnothing$ if $k\neq k'$ and $\{\cl(U_k)\}_k$ is locally finite, so $\bigcup_{k\geq1}\cl(U_k)$ is an invariant closed set. Thus, the family of invariant closed sets $\{\cl(\{G_k=0\}\cap U_k)\setminus(\{G_k=0\}\cap U_k)\}_k$ is also locally finite and its union is an invariant closed subset of $Y$. Consider the invariant open set $\Omega:=Y\setminus\bigcup_{k\geq1}(\cl(\{G_k=0\}\cap U_k)\setminus(\{G_k=0\}\cap U_k))$ and observe: \em each $\{G_k=0\}\cap U_k$ is an invariant closed subset of $\Omega$\em.

Thus, after shrinking $Y$ if necessary, $Z:=\bigcup_k(\{G_k=0\}\cap U_k)$ is an invariant closed subset of $Y$. Define
$$
\Ff_y:=\begin{cases}
G_{k,y}\an_{Y,y}&\text{if $y\in U_k\cap U$},\\
\an_{Y,y}&\text{if $y\in Y\setminus Z$},
\end{cases}
$$
which is a coherent sheaf of ideals whose zero-set is $Z$. By \cite{coen} the previous coherent sheaf is generated by $d+1$ global sections where $d:=\dim(X)$. Taking real and imaginary parts of such global sections, we deduce that $\Ff$ is generated by $2d+2$ invariant global sections $G_1',\ldots,G_{2d+2}'$. Observe that $G'_{j,x_k}=G_{k,x_k}A_{j,x_k}$ where $A_{j,x_k}\in\an_{Y,x_k}$ is invariant for each $j,k\geq1$.

In addition, $\{G_1'=0,\ldots,G_{2d+1}'=0\}\cap X=\{x_k\}_k$. Let $\mu_1,\ldots,\mu_{2d+2}\in(0,1)$ be such that 
$$
G':=\sum_{j=1}^{2d+2}\mu_jG_j'^2
$$ 
does not vanish at the points $\{y_i\}_i$. Thus, $G'$ does not vanish identically on any of the irreducible components $C_i$ of $C$ and $G'_{x_k}=G_{k,x_k}^2A'_{x_k}$ where $A'_{x_k}\in\an_{Y,x_k}$ is an invariant holomorphic unit such that $A'_{x_k}(x_k)>0$. By Lemma \ref{red0} there exists an invariant holomorphic function $A\in\an(Y)$ with empty zero-set such that $A(x_k)=\frac{1}{2A'_{x_k}(x_k)}$ for each $k\geq1$. Thus, $AG'$ satisfies:
\begin{itemize}
\item $AG'\geq0$ on $X$.
\item $(AG')^2<\frac{1}{2}G_k^4<F$ on $K_k'\setminus\{x_k\}$ where $K_k'\subset K_k$ is a compact neighborhood of $x_k$ in $X$.
\end{itemize}
Let $\sigma:X\to[0,1]$ be a continuous function such that $\sigma(x_k)=1$ for each $k\geq1$ and its support is contained in $\bigcup_{k\geq1}K_k'$. Observe that $\veps:=(F|_X+\sigma)\exp(-2-F^2)$ is a strictly positive continuous function on $X$ that takes values in the interval $(0,1)$. By Theorem \ref{fa} there exists an invariant homomorphic function $H:Y\to\C$ such that $|H|_X-\frac{\veps}{2}|<\frac{\veps}{4}$ and $H(y_i)\neq0$ for each $i\geq1$, so $\frac{\veps}{4}<H|_X<\frac{3\veps}{4}<\frac{3}{4}$. Define $G'':=AG'\exp(-1-AG')H$ and let us check: \em There exists $0<\lambda<1$ such that $G:=\lambda G''$ satisfies the required properties\em. 

Observe that $G''(y_i)\neq0$ and pick $\lambda\in(0,1)$ different from $\sqrt{F(y_i)}/G''(y_i)$ for each $i\geq1$. Thus, $(F-(\lambda G'')^2)(y_i)\neq0$ for each $i\geq1$ and $\{F-(\lambda G'')^2=0\}\cap C$ is a discrete set. We prove next $F-(\lambda G'')^2>0$ on $X\setminus\{x_k\}_{k\geq1}$. On $K_k'\setminus\{x_k\}$ it holds
$$
(\lambda G'')^2<(AG'\exp(-1-AG')H)^2<(AG')^2<\frac{1}{2}G_k^4<F
$$
for each $k\geq1$. On $X\setminus\bigcup_{k\geq1}K_k'$ we have $\sigma=0$ and
$$
0<H<\frac{3\veps}{4}<(F+\sigma)\exp(-2-F^2)=F\exp(-2-F^2)<\frac{F}{2+F^2}.
$$
Thus, on $X\setminus\bigcup_{k\geq1}K_k'$ it holds 
$$
(\lambda G'')^2<G''^2<H^2<\Big(\frac{F}{2+F^2}\Big)^2<\frac{F}{2+F^2}<F,
$$
as required.
\end{proof}

\subsection{Eliminating isolated zeros from the imaginary part}
We will deal with normal Stein spaces $(Y,\an_Y)$ endowed with an anti-involution $\sigma:Y\to Y$. We denote the fixed space of $\sigma$ with $X$ and let $C\subset Y$ be an invariant analytic set of real dimension $1$ such that $X\cap C$ is a discrete set. Let $\{C_i\}_{i\geq1}$ be the irreducible components of $C$ and $\alpha_i:\R\to C_i$ a surjective analytic map.

Our purpose is to represent the invariant holomorphic functions $F\in\an(Y)$ such that $F|_X\geq0$ and $\{F=0\}\cap(X\cup C)$ is a discrete set as sum of squares of meromorphic functions. We provide an additional reduction to assume $\{F=0\}\cap C\subset X\cap C$. We denote the set of all the invariant holomorphic functions on $Y$ that are sums of $k$ squares of invariant holomorphic functions on $Y$ with $\Sos_k\an^\sigma(Y)^2$. We will use freely that $\Sos_k\an^\sigma(Y)^2$ is multiplicatively closed if $k=1,2,4,8$.

\begin{thm}\label{maintool}
Let $F:Y\to\C$ be an invariant holomorphic function such that $F|_X\geq0$ and $F|_{C_i}\neq0$ for each $i\geq1$. Then, after shrinking $Y$ if necessary, there exist invariant holomorphic functions $F',H_1,H_2:Y\to\C$ such that:
\begin{itemize}
\item[(i)] $H_1,H_2\in\Sos_4\an^\sigma(Y)^2$.
\item[(ii)] $\{F'=0\}\cap C\subset X\cap C$ and $\{H_\ell=0\}\cap X=\varnothing$ for $\ell=1,2$.
\item[(iii)] $FH_1=F'H_2$ and $F'_x\an_{Y,x}=F_x\an_{Y,x}$ for each $x\in X$.
\end{itemize}
\end{thm}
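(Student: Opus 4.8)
The plan is to sweep the bad zeros of $F$ off $C$ at the cost of a sum of four invariant squares. Set $Z:=\{F=0\}\cap(C\setminus X)$; since $F|_{C_i}\neq0$ for every $i$, the set $Z$ is discrete, invariant, and $Z\cap X=\varnothing$. Because $\Sos_4\an^\sigma(Y)^2$ is multiplicatively closed and we may shrink $Y$ to an invariant Stein neighbourhood of $X\cup C$, the global construction can be assembled from local prescriptions at the points of $Z$ by an exhaustion/locally finite sum argument as in the proof of Lemma \ref{dim1}.

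\emph{Local analysis.} For $p\in Z$ let $C_i$ be the component of $C$ through $p$ and $\widetilde C_p$ its complexification, a complex curve germ at $p$; as $F$ is not identically zero on $C_i$, ${\rm mult}_{\widetilde C_p}(F)=:k_p$ is finite and $k_{\sigma(p)}=k_p$ by invariance. The structural point is that the part $D$ of the zero divisor $\{F=0\}$ whose support meets $C\setminus X$ is $\sigma$-invariant and ``almost even'': a non-self-conjugate irreducible component $\Gamma$ contributes $\Gamma+\sigma(\Gamma)$; a self-conjugate $\Gamma$ whose real points have dimension $1$ occurs with \emph{even} multiplicity (because $F|_X\ge0$ forces even vanishing order along $\Gamma\cap X$); only self-conjugate components with discrete real points can contribute oddly, and such a contribution costs one extra square, as the example $F=x^2+y^2+1$, a sum of three squares, already shows. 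This is where I expect the bound $4$ to appear: $2$ squares to $\sigma$-symmetrize a factor on $C_i$ with its conjugate on $\sigma(C_i)$ via $(\Re P)^2+(\Im P)^2$ (which is invariant holomorphic, equal to $P\cdot\overline{P\circ\sigma}$, with zero divisor $\operatorname{div}(P)+\sigma(\operatorname{div}(P))$), plus $1$ for an odd self-conjugate component, plus $1$ for a strictly positive correction near $X$; equivalently $2\cdot2$, using the four-square identity to multiply such factors.

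\emph{Construction and clearing.} Next I would build an invariant $H_2\in\Sos_4\an^\sigma(Y)^2$ with $\{H_2=0\}\cap X=\varnothing$ whose germ at each $p\in Z$ generates the same ideal of $\an_{Y,p}$ as $F_p$: by Cartan's Theorems A and B on the Stein space $Y$ realize a holomorphic $P$ with prescribed zero divisor near $Z$, chosen to avoid $X$ (possible since $Z\cap X=\varnothing$); pass to $(\Re P)^2+(\Im P)^2$; multiply the finitely many such factors; and add an invariant strictly positive sum of squares produced by Theorem \ref{fa}, with Lemma \ref{red0} used to normalize values at the discrete marked sets. Picking a complementary $H_1\in\Sos_4\an^\sigma(Y)^2$ with $\{H_1=0\}\cap X=\varnothing$ that absorbs the auxiliary divisor introduced in $H_2$, set $F':=FH_1/H_2$; then holomorphy of $F'$ on $Y$ follows by a coherent ``conductor'' ideal sheaf argument in the style of \S\S\ref{paso4}--\ref{paso5}, property (ii) follows from the multiplicity matching along $C$, property (iii) because $H_1,H_2$ are units at every $x\in X$, and in addition $F'|_X\ge0$ from $FH_1=F'H_2$.

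\emph{Main obstacle.} The hard part is the clearing step carried out $\sigma$-equivariantly: matching multiplicities along the $1$-dimensional real set $C$ is routine, but promoting this to a divisibility $H_2\mid FH_1$ \emph{throughout} the complex surface $Y$ — so that $F'$ is holomorphic everywhere, not merely along $C$ — while keeping $H_1,H_2$ sums of only four invariant squares, controlling their zero sets off $X$ (so the construction cannot simply use $F$ or $F^2$, which vanish on $\{F=0\}\cap X$), and handling the finitely many points of $\Sing(Y)\cap C$, where $\an_{Y,x}$ need not be a unique factorization domain, requires a careful combination of the coherent-sheaf techniques of \S\ref{s2} with the full approximation Theorem \ref{fa}.
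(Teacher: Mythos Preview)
Your proposal is a sketch that correctly identifies the target and the main obstacle, but it does not resolve that obstacle, and the route you outline diverges from the paper's in a way that makes the gap harder to close.

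The paper does \emph{not} try to build $H_2$ with $H_{2,p}\an_{Y,p}=F_p\an_{Y,p}$ at the bad points; it aims for $F_p^2$ instead. Concretely, after separating the components of $\{F=0\}$ that meet $C\setminus X$ from those that meet $X$ (your Step~1 analogue), it defines a locally principal sheaf $\Jj$ equal to $F\an_Y$ near those components and trivial elsewhere, picks three global generators, and uses Lemma~\ref{red1} to produce a single $G_4$ with $\Jj_z=G_{4,z}\an_{Y,z}$ along $C\cup(\{F=0\}\cap X)$. Your $P\cdot\overline{P\circ\sigma}$ idea then appears as $G:=G_4\overline{G_4\circ\sigma}\in\Sos_2\an^\sigma(Y)^2$, and the paper sets $H_2:=G^2+E_2^2H^2$ and $H_1:=G+E_1^2F^2$. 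The point is that these particular shapes already live in the right ideals, so the only issue is to prevent cancellation, i.e.\ to force $H_2$ to \emph{divide} $G^2$ and $FH_1$ to \emph{divide} $F^2$ along $C$. That is exactly the content of Lemmas~\ref{red4} and~\ref{red5}: a generic choice of the unit $E$ (arranged via Lemma~\ref{red0}) makes $1+\lambda A^2$ or $A+\lambda E_0^2F$ nowhere zero on each $C_i$, which is a countable avoidance argument on $\R$. These lemmas are the substance of the proof, and nothing in your outline replaces them.

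Your ``almost even'' divisor decomposition is a detour. The zero divisor of $F$ on the surface $Y$ may be complicated, but the paper never analyses it: working with the sheaf $\Jj$ that is locally $F\an_Y$ sidesteps any case split on self-conjugate versus non-self-conjugate components, and the parity considerations you raise are absorbed by squaring ($H_2$ targets $F^2$). Likewise, your plan to have $H_1$ ``absorb the auxiliary divisor introduced in $H_2$'' is undefined: if $H_2$ really matched $F$ exactly along $C\setminus X$, there would be no auxiliary divisor to absorb, and you would need $H_1$ to be a unit at every $p\in Z$---but then $H_1$ plays no role and you are left with the bare question of why $F/H_2$ is holomorphic on $Y$, which is precisely the obstacle you flag as unresolved.
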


Before we prove the previous result we need some preliminary lemmas.

\begin{lem}\label{red1}
Let $Z\subset Y$ be an analytic subset of real dimension $\leq 1$ and $G_1,G_2,G_3:Y\to\C$ holomorphic functions such that: 
\begin{itemize}
\item[(i)] $\{G_1=0,G_2=0,G_3=0\}$ does not contain any of the irreducible components of $Z$.
\item[(ii)] For each $z\in Z$ there exists $k(z)\in\{1,2,3\}$ such that $(G_1,G_2,G_3)\an_{Y,z}=(G_{k(z)})\an_{Y,z}$.
\end{itemize}
Then there exists $\lambda_1,\lambda_2,\lambda_3\in\C$ such that $(\lambda_1G_1+\lambda_2G_2+\lambda_3G_3)_z$ divides $G_{j,z}$ for each $z\in Z$ and each $j=1,2,3$. 
\end{lem}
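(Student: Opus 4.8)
The plan is to choose the coefficients $\lambda_i$ generically, avoiding countably many ``bad'' hypersurfaces in $\C^3$, exactly in the style of the genericity arguments already used in \S\ref{paso4}, \S\ref{paso7} and in the proof of Theorem \ref{movc}. First I would decompose $Z$ into its irreducible components $\{Z_k\}_{k\geq1}$; by hypothesis (i) none of them is contained in the common zero-set $\{G_1=0,G_2=0,G_3=0\}$, so for each $k$ the restriction of at least one $G_j$ to $Z_k$ is not identically zero. For a fixed $k$, pick a point $z_k\in Z_k$ lying outside $\bigcup_{k'\neq k}Z_{k'}$, outside $\Sing(Z_k)$, outside the singular locus of $Y$, and (crucially) lying in the open dense subset of $Z_k$ where condition (ii) holds with a single fixed index $k(z)$ valid on a whole neighborhood — this can be arranged because the loci on which $(G_1,G_2,G_3)\an_{Y,z}$ fails to be generated by a fixed $G_{k(z)}$ are closed analytic subsets, and since $Z_k$ is irreducible their intersection with $Z_k$ is either $Z_k$ itself (excluded by (ii)) or a discrete set. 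Thus $\{z_k\}_{k\geq1}$ is a discrete subset of $Y$.

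Next, at each $z_k$ I would work in the local ring $\an_{Y,z_k}$, which is reduced (and, after shrinking near $z_k$, we may even assume it is the local ring of a normal germ, though reducedness suffices). Writing $H_{z_k}:=G_{k(z_k),z_k}$ for the distinguished generator at $z_k$, we have $G_{j,z_k}=A_{j,z_k}H_{z_k}$ for some $A_{j,z_k}\in\an_{Y,z_k}$ with $A_{k(z_k),z_k}$ a unit. Then for $(\lambda_1,\lambda_2,\lambda_3)\in\C^3$,
$$
(\lambda_1G_1+\lambda_2G_2+\lambda_3G_3)_{z_k}=\Big(\sum_{j}\lambda_j A_{j,z_k}\Big)H_{z_k},
$$
and the bracketed factor is a unit of $\an_{Y,z_k}$ precisely when its value at $z_k$ is nonzero, i.e. when $\sum_j\lambda_j A_{j,z_k}(z_k)\neq0$; since $A_{k(z_k),z_k}(z_k)\neq0$, the linear form $L_k(\lambda):=\sum_j\lambda_j A_{j,z_k}(z_k)$ is not identically zero, so $\{L_k=0\}$ is a hyperplane $H_k\subset\C^3$. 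Choosing
$$
(\lambda_1,\lambda_2,\lambda_3)\in\C^3\setminus\bigcup_{k\geq1}H_k\neq\varnothing
$$
(a nonempty choice since a countable union of hyperplanes cannot cover $\C^3$) guarantees that $G:=\lambda_1G_1+\lambda_2G_2+\lambda_3G_3$ has $G_{z_k}$ a unit times the distinguished generator of $(G_1,G_2,G_3)\an_{Y,z_k}$, hence $G_{z_k}$ divides each $G_{j,z_k}$; and by \S\ref{paso3}-style propagation the same holds at all points of $Z_k$ near $z_k$, hence outside a discrete subset $D_k\subset Z_k$.

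It remains to promote ``divisibility outside a discrete set on each $Z_k$'' to divisibility \emph{at every point of $Z$}. For this I would run the now-familiar coherent-sheaf argument: consider, for each $j$, the coherent sheaf of ideals $\Jj^{(j)}:=(G\an_Y:G_j)$, whose stalk at $z$ equals $\an_{Y,z}$ exactly when $G_z$ divides $G_{j,z}$. Its support $W_j:=\supp(\an_Y/\Jj^{(j)})$ is a closed analytic subset of $Y$; by the previous paragraph $W_j\cap Z_k$ is contained in the discrete set $D_k$ for each $k$, so in particular $W_j$ does not contain any irreducible component of $Z$. The only subtlety — and the step I expect to be the main obstacle — is to rule out that $W_j$ meets $Z$ at the finitely many ``exceptional'' points (points of $\Sing(Y)$, of $\Sing(Z_k)$, intersection points of distinct components, and the points of $D_k$). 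Here one exploits that divisibility is a stalk-local condition and uses normality/reducedness of $\an_{Y,z}$ together with the fact that $G$ generates $(G_1,G_2,G_3)\an_{Y,z}$ on a punctured neighborhood: for a one-dimensional reduced germ, an ideal that is principal on a punctured neighborhood and whose generator extends is already principal at the center, with the extended element as generator (this is where one may need to shrink $Y$, exactly as in the statement's hypothesis ``after shrinking $Y$''). Once $W_j\cap Z=\varnothing$ for $j=1,2,3$, we conclude $G_z\mid G_{j,z}$ for every $z\in Z$ and every $j$, which is the assertion of the lemma.
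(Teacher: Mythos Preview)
Your argument has a genuine gap in the final step. Avoiding one hyperplane per irreducible component of $Z$ is not enough: the linear combination $G_\lambda=\lambda_1G_1+\lambda_2G_2+\lambda_3G_3$ can then still vanish at \emph{other} points of $Z$, and at such a point divisibility simply fails. Your proposed fix --- that for a reduced germ an ideal principal on a punctured neighborhood is principal at the center with the extended element as generator --- is false. Take $Y=\C^2$, $Z=\{(t,0):t\in\R\}$, $G_1=1$, $G_2=x$, $G_3=0$; hypotheses (i) and (ii) hold with $k(z)\equiv1$. Picking $z_1=(0,0)$ your hyperplane is $\{\lambda_1=0\}$, so $\lambda=(1,1,0)$ is admissible for you; but then $G_\lambda=1+x$ vanishes at $(-1,0)\in Z$ and $G_{\lambda,(-1,0)}$ does \emph{not} divide $G_{1,(-1,0)}=1$. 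No coherent-sheaf argument or shrinking of $Y$ (which, incidentally, the lemma does not allow) rescues this. The discrete ``bad'' set $D_k$ you produce depends on $\lambda$, and once $\lambda$ is fixed those points are genuinely bad.

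The point you are missing, and the heart of the paper's proof, is that one must ensure $G_\lambda(z)\neq0$ at \emph{every} $z\in Z$ where $(G_1(z),G_2(z),G_3(z))\neq(0,0,0)$. This is an uncountable family of linear conditions on $\lambda$, so the countable-hyperplane trick cannot work. The paper exploits instead that $Z$ has \emph{real} dimension $\leq1$: along each one-dimensional component $Z_i$ (parameterized by $\R$), the set $\Ss_i\subset\C^3$ of $\lambda$'s for which $G_\lambda$ vanishes somewhere on $Z_i$ is the image of explicit analytic maps $\C^2\times\R\to\C^3$, hence has measure zero by Sard's theorem, and $\C^3\setminus\bigcup_i\Ss_i$ is residual. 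The remaining discrete locus $D'\cup D''$ (isolated points of $Z$ and points where all $G_j$ vanish) is then handled by countably many conditions exactly as you do. In the example above the correct bad set is $\{\lambda:\lambda_1+\lambda_2 t=0\text{ for some }t\in\R\}$, a real-codimension-one subset of $\C^3$; a valid choice is $\lambda=(\sqrt{-1},1,0)$.
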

\begin{proof}
Write $Z:=D\cup\bigcup_{i\geq1}Z_i$, where the $Z_i$ are the irreducible components of $Z$ of dimension $1$ and $D$ is the union of the irreducible components of $Z$ of dimension $0$. Note that $D$ is a discrete subset of $Y$. For each $i\geq1$ let $\beta_i:\R\to Z_i$ be a surjective analytic map (which one finds using the normalization). Denote $D':=D\cap\{G_1=0,G_2=0,G_3=0\}$.

For each $p\in D\setminus D'$ either $G_1(p)\neq 0$ or $G_2(p)\neq 0$ or $G_3(p)\neq0$. Define 
$$
\Tt_p:=\{(\lambda_1,\lambda_2,\lambda_3)\in\C^2:\, G_1(p)+\lambda_2G_2(p)+\lambda_3G_3(p)=0\}, 
$$
which is a hyperplane of $\C^3$ through the origin.

The set $D'':=\{G_1=0,G_2=0,G_3=0\}\cap\bigcup_{i\geq1}Z_i$ is discrete. Fix $i\geq1$ and let $E_i:=\beta_i^{-1}(D'')$, which is a discrete subset of $\R$. Define $f_{ij}:=G_j\circ\beta_i|_{\R\setminus E_i}$ for $j=1,2,3$ and the non-zero analytic equation
$$
f_{i1}(t)\x_1+f_{i2}(t)\x_2+f_{i3}(t)\x_3=0
$$
for $t\in\R\setminus E_i$. Consider the analytic maps
$$
\begin{cases}
\varphi_{1i}:\C^2\times(\R\setminus E_i)\to\C^3,\ (z_1,z_2,t)\mapsto z_1(f_{3i}(t),0,-f_{1i}(t))+z_2(f_{2i}(t),-f_{1i}(t),0),\\
\varphi_{2i}:\C^2\times(\R\setminus E_i)\to\C^3,\ (z_1,z_2,t)\mapsto z_1(0,f_{3i}(t),-f_{2i}(t))+z_2(f_{2i}(t),-f_{1i}(t),0),\\
\varphi_{3i}:\C^2\times(\R\setminus E_i)\to\C^3,\ (z_1,z_2,t)\mapsto z_1(f_{3i}(t),0,-f_{1i}(t))+z_2(0,f_{3i}(t),-f_{2i}(t)).
\end{cases}
$$
Observe that 
$$
\Ss_i:=\bigcup_{t\in\R\setminus E_i}\{(x_1,x_2,x_3)\in\C^3:\ f_{i1}(t)\x_1+f_{i2}(t)\x_2+f_{i3}(t)\x_3=0\}=\bigcup_{j=1}^3\im(\varphi_{ji}).
$$
By Sard's theorem the set $\C^3\setminus\Ss_i$ is residual. 

If $z\in D'\cup D''$, either $G_{1,z}$ divides $G_{2,z},G_{3,z}$ or ${G_2,z}$ divides $G_{1,z},G_{3,z}$ or ${G_3,z}$ divides $G_{1,z},G_{2,z}$. Thus, there exists $\eta_z,\xi_z\in\an_{Y,z}$ such that 
$$
\begin{cases}
G_{2,z}=G_{1,z}\eta_z, G_{3,z}=G_{1,z}\xi_z&\text{(Case 1) or}\\
G_{1,z}=G_{2,z}\eta_z, G_{3,z}=G_{2,z}\xi_z&\text{(Case 2) or}\\
G_{1,z}=G_{3,z}\eta_z, G_{2,z}=G_{3,z}\xi_z&\text{(Case 3)}.
\end{cases}
$$
Consequently,
$$
\lambda_1G_{1,z}+\lambda_2G_{2,z}+\lambda_3G_{3,z}=\begin{cases}
G_{1,z}(\lambda_1+\lambda_2\eta_z+\lambda_3\xi_z)&\text{(Case 1) or}\\
G_{2,z}(\lambda_1\eta_z+\lambda_2+\lambda_3\xi_z)&\text{(Case 2) or}\\
G_{3,z}(\lambda_1\eta_z+\lambda_2\xi_z+\lambda_3)&\text{(Case 3)}.
\end{cases}
$$
There exists a discrete set $\Delta\subset\C^3$ such that if $\lambda:=(\lambda_1,\lambda_2,\lambda_3)\not\in\Delta$,
$$
\gamma_z:=\begin{cases}
\lambda_1+\lambda_2\eta_z+\lambda_3\xi_z&\text{(Case 1) or}\\
\lambda_1\eta_z+\lambda_2+\lambda_3\xi_z&\text{(Case 2) or}\\
\lambda_1\eta_z+\lambda_2\xi_z+\lambda_3&\text{(Case 3).}
\end{cases}
$$
is a unit for each $z\in D'\cup D''$. The set $\Ss:=\C^3\setminus(\bigcup_{p\in D'}\Tt_p\cup\bigcup_{i\geq1}\Ss_i\cup \Delta)$ is residual and we claim: \em If $\lambda\in\Ss$, then $G_\lambda:=\lambda_1G_{1,z}+\lambda_2G_{2,z}+\lambda_3G_{3,z}$ divides $G_{1,z},G_{2,z},G_{3,z}$ for each $z\in Z$\em. We distinguish several cases: 

\noindent{\sc Case 1}. If $z\in D\setminus D'$, then $G_\lambda(z)\neq0$ (because $\lambda\not\in\bigcup_{p\in D\setminus D'}\Tt_p$), so $G_{\lambda,z}$ divides $G_{1,z},G_{2,z},G_{3,z}$.

\noindent{\sc Case 2}. If $z\in Z\setminus(D\cup D'')$, then $G_\lambda(z)\neq0$ (because $\lambda\not\in\bigcup_{i\geq1}\Ss_i$), so $G_{\lambda,z}$ divides $G_{1,z},G_{2,z},G_{3,z}$.

\noindent{\sc Case 3}. If $z\in D'\cup D''$, then $G_{\lambda,z}=G_{j,z}\gamma_z$ for some $j=1,2,3$ such that $G_{j,z}$ divides the remaining $G_{k,z}$ and $\gamma_z\in\an_{Y,z}$ is a unit (because $\lambda\not\in\Delta$), so $G_{\lambda,z}$ divides $G_{1,z},G_{2,z},G_{3,z}$, as required.
\end{proof}

\begin{lem}\label{red4}
Let $F_1,F_2:Y\to\C$ be invariant holomorphic functions such that $F_j|_{C_i}\neq0$ for $j=1,2$ and $i\geq1$ and $F_{1,z}$ divides $F_{2,z}$ for each $z\in C$. Then there exists an invariant holomorphic function $E\in\an(Y)$ with empty zero-set such that $H:=F_1^2+E^2F_2^2$ satisfies: $H_z$ divides $F_{1,z}^2$ for each $z\in C$.
\end{lem}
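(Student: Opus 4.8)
\emph{The plan.} The plan is to reduce the required divisibility along $C$ to a non\-vanishing statement for $1+E^2Q^2$, and then to build $E$ as the exponential of a generic infinite $\R$-linear combination of invariant holomorphic functions, killing the a priori one-dimensional ``bad locus'' along $C$ by a Sard-type argument on the parametrizations $\alpha_i$ of the irreducible components of $C$. First I would set $Q:=F_2/F_1$. Since $F_{1,z}$ divides $F_{2,z}$ for each $z\in C$ and $F_1|_{C_i}\neq0$ (so $F_{1,z}\neq0$, hence a non\-zero\-divisor in the normal local ring $\an_{Y,z}$, which is a domain), the analytic set $\{z:\ F_{1,z}\nmid F_{2,z}\}$ misses $C$; discarding it together with the components of $Y$ on which $F_1$ vanishes (which do not meet $C$ either, as $F_1|_{C_i}\neq0$), one gets an invariant open neighbourhood $U\supset C$ on which $Q$ is a well-defined invariant holomorphic function. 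For $z\in C$ one has $F_{2,z}=F_{1,z}Q_z$ and hence $H_z=F_{1,z}^2(1+E_z^2Q_z^2)$; cancelling the non\-zero\-divisor $F_{1,z}^2$ shows $H_z\mid F_{1,z}^2$ if and only if $1+E_z^2Q_z^2$ is a unit of $\an_{Y,z}$, i.e. if and only if $1+E(z)^2Q(z)^2\neq0$. So it suffices to find an invariant $E\in\an(Y)$ with empty zero-set such that $1+E^2Q^2$ does not vanish on $C$.

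Before constructing $E$ I would isolate the exceptional points. As $Q|_{C_i}\neq0$, both $\{Q=0\}\cap C$ and $\{Q^2=-1\}\cap C=\{Q=\pm\sqrt{-1}\}\cap C$ are discrete; on the first the condition $1+E^2Q^2\neq0$ holds for every $E$, and since $Q$ is invariant (hence real on $X$) the second is contained in $C\setminus X$. Writing $1+E^2Q^2=(1+\sqrt{-1}\,EQ)(1-\sqrt{-1}\,EQ)$, the condition reads $EQ\neq\pm\sqrt{-1}$ on $C$, which at points of $C\cap X$ (where $EQ$ is real) is automatic. Thus only $C\setminus X$ genuinely requires work.

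Next I would take $E:=\exp\bigl(\sum_{j\geq1}\lambda_jB_j\bigr)$, which is invariant and zero-free for any real $\lambda_j>0$ and any invariant $B_j\in\an(Y)$ normalized on an exhaustion of $Y$ by compact sets so that the series converges uniformly on compact sets. I would choose $\{B_j\}_{j\geq1}$ so that for every $z\in C\setminus X$ the complex numbers $B_j(z)$ span $\C$ over $\R$: for each $z_0\in C\setminus X$, Cartan's Theorem B provides $G\in\an(Y)$ with $G(z_0)=1$ and $G(\sigma(z_0))=0$, so $\Im(G)$ is invariant with $\Im(G)(z_0)=\tfrac{1}{2\sqrt{-1}}\notin\R$; the sets $\{z:\ \Im(G)(z)\notin\R\}$ are open and cover $C\setminus X$, so a countable subcover of them together with the constant function $1$ yields the desired family (rescaling the $B_j$ by small positive constants preserves $\R$-spanning while securing convergence). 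In particular the $B_j$ do not vanish simultaneously on $C\setminus X\supset\{Q^2=-1\}\cap C$. The parameters run over $\Lambda:=\prod_{j\geq1}(0,1)$.

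Finally, for each $i$ put $q_i:=Q\circ\alpha_i$ and $b_{ij}:=B_j\circ\alpha_i$, real-analytic on $\R$ with $q_i\not\equiv0$, and consider the equation $1+\exp\bigl(2\sum_j\lambda_jb_{ij}(t)\bigr)q_i(t)^2=0$ in $\lambda\in\Lambda$. For fixed $t$ with $q_i(t)\neq0$: if $\alpha_i(t)\in X$ then all $b_{ij}(t)$ are real while every logarithm of the negative number $-q_i(t)^{-2}$ is non-real, so there is no solution; if $\alpha_i(t)\notin X$ then the $\R$-linear map $\lambda\mapsto\sum_jb_{ij}(t)\lambda_j$ is onto $\C$, so the solution set is a countable union of affine subspaces of $\Lambda$ of real codimension $2$. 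Sweeping the one real parameter $t$ and the countably many indices $i$, the set of $\lambda\in\Lambda$ for which $1+E^2Q^2$ vanishes somewhere on $C$ is a meager (indeed null) subset of $\Lambda$ --- this is the Sard-on-$\alpha_i$ device already used in the proof of Lemma \ref{red1} --- so I pick $\lambda$ outside it. Then $E:=\exp\bigl(\sum_j\lambda_jB_j\bigr)$ is invariant with empty zero-set, $1+E^2Q^2$ vanishes nowhere on $C$, and $H:=F_1^2+E^2F_2^2$ satisfies $H_z\mid F_{1,z}^2$ for all $z\in C$. The main obstacle is exactly this last step: because $C$ is a non-compact analytic curve, non\-vanishing of $1+E^2Q^2$ must be enforced at a continuum of points, which is why one has to route the genericity argument through the parametrizations $\alpha_i$ and a codimension count, and why the loci $\{Q=0\}$, $\{Q^2=-1\}$ and $C\cap X$ must be separated off and dealt with by hand.
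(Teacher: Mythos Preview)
Your approach is correct and reaches the same reduction as the paper --- both arguments write $H=F_1^2(1+E^2A^2)$ on a neighbourhood of $C$ (with $A=F_2/F_1$) and then manufacture an invariant unit $E$ so that $1+E^2A^2$ is zero-free along $C$. The routes to that last step, however, are genuinely different.

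The paper's device is a \emph{preliminary normalization plus a one-real-parameter choice}. Using Lemma~\ref{red0} it first replaces $F_2$ by $E_0F_2$ so that $A(z_i)\notin\R\cup\sqrt{-1}\,\R$ at one chosen point $z_i$ of each $C_i$. This forces both $a_i:=\Re(A\circ\alpha_i)$ and $b_i:=\Im(A\circ\alpha_i)$ to be \emph{nonzero} real-analytic functions on $\R$, so the imaginary part $2\lambda a_ib_i$ of $1+\lambda A^2$ vanishes only on the discrete set $\{a_ib_i=0\}$. Hence for all but countably many real $\lambda>0$ one has $1+\lambda A^2\neq0$ on every $C_i$, and $E:=\sqrt{\lambda}\,E_0$ does the job. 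No Sard argument, no infinite parameters.

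Your construction instead takes $E=\exp\bigl(\sum_j\lambda_jB_j\bigr)$ with infinitely many real parameters and a Sard-type codimension count in $\Lambda=\prod_j(0,1)$. This works, but note that the appeal to ``the Sard-on-$\alpha_i$ device already used in Lemma~\ref{red1}'' is not literal: there the parameter space is $\C^3$, here it is infinite-dimensional. To make your step rigorous one should localize --- for each $z\in C\setminus X$ two specific indices $j_m,j'_m$ already give $B_{j_m}(z),B_{j'_m}(z)$ spanning $\C$ over $\R$, and this choice is locally constant in $z$; then Fubini in the product measure on $\Lambda$ over those two coordinates shows that each piece of the bad set has measure zero, and one takes a countable union over patches, components $i$, and logarithm branches $k$. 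What your approach buys is independence from Lemma~\ref{red0}; what the paper's approach buys is a much shorter argument with only a countable bad set in a single real parameter.
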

\begin{proof}
Fix $i\geq1$ and let $z_i\in C_i\setminus(X\cup\{F_1=0\}\cup\{F_2=0\})$. By Lemma \ref{red0} there exists an invariant holomorphic function $E_0\in\an(Y)$ with empty zero-set such that $(\ol{F_1}F_2E_0)(z_i)\in\C\setminus(\R\cup\sqrt{-1}\R)$ for each $i\geq1$. Substitute $F_2$ by $E_0F_2$. As $F_{1,z}$ divides $F_{2,z}$ for each $z\in C$, there exist an open neighborhood $V$ of $C$ and a holomorphic function $A\in\an(V)$ such that $F_2=F_1A$ on $V$. As $\ol{F_1}F_2=\ol{F_1}F_1A$ and $(\ol{F_1}F_2)(z_i)\in\C\setminus(\R\cup\sqrt{-1}\R)$ for each $i\geq1$, we deduce $A(z_i)\in\C\setminus(\R\cup\sqrt{-1}\R)$ for each $i\geq1$. Fix $\lambda\in(0,+\infty)$ and write
$$
(F_1^2+\lambda F_2^2)\circ\alpha_i=(F_1^2\circ\alpha_i)(1+\lambda(A^2\circ\alpha_i)).
$$
Write $A\circ\alpha_i:=a_i+\sqrt{-1}b_i$ where $a_i,b_i\in\an(\R)\setminus\{0\}$ for $i\geq1$, so the set $\Ss_i:=\{a_ib_i=0\}$ is discrete for each $i\geq1$. Consider the linear system $1+\z(a_i+\sqrt{-1}b_i)^2=0$, which we rewrite as
\begin{equation}\label{system}
\begin{cases}
1+\z(a_i^2-b_i^2)=0,\\
\z a_ib_i=0.
\end{cases}
\end{equation}
If $(a_ib_i)(t)\neq0$, then $(1+\lambda(A\circ\alpha_i))(t)\neq0$ for each $\lambda\in\R$. Define 
$$
\Tt_i:=\{-1/(a_i^2-b_i^2)(t):\ t\in \Ss_i,\ (a_i^2-b_i^2)(t)\neq0\},
$$
which is a countable set. Let $\lambda\in\R\setminus\bigcup_{i\geq1}\Tt_i$ and take $H:=F_1^2+\lambda F_2^2$. We claim:\em 
$$
(1+\lambda A^2)\circ\alpha_i=1+\lambda(a_i^2-b_i^2)+\sqrt{-1}a_ib_i
$$
is nowhere zero for each $i\geq1$\em.

It is enough to analyze the values $t\in \Ss_i$ for each $i\geq1$. We have
$$
(1+\lambda(a_i^2-b_i^2)+\sqrt{-1}a_ib_i)(t)=(1+\lambda(a_i^2-b_i^2))(t)\begin{cases}
\neq0&\text{if $(a_i^2-b_i^2)(t)\neq0$},\\
=1\neq0&\text{if $(a_i^2-b_i^2)(t)=0$},
\end{cases}
$$
as required.
\end{proof}

\begin{lem}\label{red5}
Let $F,G:Y\to\C$ be invariant holomorphic functions such that $F|_{C_i},G|_{C_i}\neq0$ for each $i\geq1$ and $F_z\an_{Y,z}=G_z\an_{Y,z}$ for each $z\in C\setminus X$. Then there exists an invariant holomorphic function $E\in\an(Y)$ with empty zero-set such that $H:=FG+E^2F^3$ satisfies: $H_z$ divides $F_z^2$ for each $z\in C\setminus X$.
\end{lem}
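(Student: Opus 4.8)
The plan is to run the argument of Lemma~\ref{red4} with two twists: the scalar multiplier there is replaced by a nowhere-vanishing \emph{constant}, and beforehand one uses Lemma~\ref{red0} to make the auxiliary quotient $F^2/G$ non-real at one point of \emph{each} connected component of $C\setminus X$ (not merely of each $C_i$). First I would build that quotient. Since $F_z\an_{Y,z}=G_z\an_{Y,z}$ for $z\in C\setminus X$ and $\an_{Y,z}$ is a domain (normality of $Y$), near each such $z$ one has $G_z=u_zF_z$ with $u_z$ a unit; moreover $F_z\neq0$ along $C$ because $F|_{C_i}\not\equiv0$ and $C_i$ is irreducible. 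Hence on a suitable invariant open neighbourhood $V$ of $C\setminus X$ — obtained by deleting the support of the coherent sheaf $\an_Y/\bigl((F\an_Y:G)\cap(G\an_Y:F)\bigr)$, which meets $C$ only in $X\cap C$, together with the components of $Y$ on which $F\equiv0$, which miss $C$ — the function $A:=G/F$ is holomorphic, invariant and a unit at every point, so $B:=F/A\in\an(V)$ is invariant with $B|_{C_i}\not\equiv0$ for each $i$. On $V$ one has $FG+E^2F^3=F^2A(1+E^2B)$ for every $E\in\an(Y)$; so it suffices to produce a nowhere-vanishing invariant $E$ with $1+E^2B$ nowhere zero on $C\setminus X$: for then $H:=FG+E^2F^3$ equals $F_z^2$ times a unit at each $z\in C\setminus X$ (where $F(z)=0$ one has $B(z)=0$, hence $1+E^2B$ is a unit there too), whence $H_z\an_{Y,z}=F_z^2\an_{Y,z}$ and in particular $H_z\mid F_z^2$.

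Next I would perform the reduction. For each $i$ fix a discrete set $Z_i\subset C_i\setminus(X\cup\{F=0\}\cup\{G=0\})$ meeting every connected component of $C_i\setminus X$ (possible because $\{F=0\}\cap C_i$ and $\{G=0\}\cap C_i$ are discrete), and put $Z:=\bigcup_iZ_i$, a discrete subset of $Y$ disjoint from $X$. Replacing $G$ by $v^2G$ for a nowhere-vanishing invariant $v\in\an(Y)$ leaves the above sheaf, $V$ and the non-vanishing of $F$ unchanged, turns $B$ into $v^{-2}B$, and does not affect the statement to be proved: if $FG'+\mu^2F^3$ does the job for $G'=v^2G$ and a nowhere-vanishing invariant $\mu$, then $FG+(\mu/v)^2F^3=v^{-2}(FG'+\mu^2F^3)$ does it for $G$. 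By Lemma~\ref{red0} we may therefore assume $B(z)\in\C\setminus(\R\cup\sqrt{-1}\,\R)$ for every $z\in Z$.

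Finally I would let $E$ be a constant. Write $\beta_i:=B\circ\alpha_i=p_i+\sqrt{-1}\,q_i$ on $\alpha_i^{-1}(C_i\cap V)$. By the choice of $Z_i$ and the previous step, $q_i$ is not identically zero on any connected component of $\R\setminus\alpha_i^{-1}(X)$: such a component contains a point of $\alpha_i^{-1}(Z_i)$ — in the $\sph^1$ case a $2\pi$-translate of one, using $2\pi$-periodicity of $\alpha_i$ — at which $\beta_i\notin\R$. Hence $\{q_i=0\}$ is countable there, and for $\lambda\in(0,+\infty)$ the relation $1+\lambda\beta_i(t)=0$ forces $q_i(t)=0$ and $\lambda=-1/p_i(t)$ with $p_i(t)<0$; so the set $\bigcup_i\{-1/p_i(t):\ q_i(t)=0,\ p_i(t)<0\}$ of bad values is countable. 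Choosing $\lambda$ in $(0,+\infty)$ outside it and taking $E:=\sqrt{\lambda}$ (a nowhere-vanishing invariant constant) makes $1+E^2B$ nowhere zero on $C\setminus X$, and the first paragraph concludes (the multiplier $v$ of the second paragraph being absorbed as explained there).

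The step I expect to be the real obstacle is precisely the one forcing the detour through Lemma~\ref{red0}: a priori $B=F^2/G$ may be real-valued along entire sub-arcs of $C$, and then $1+\lambda B$ would vanish somewhere on such an arc for a whole discrete set of $\lambda$, and these sets could exhaust $(0,+\infty)$; the cure is to plant an auxiliary base point in every connected component of $C\setminus X$, not just in every $C_i$, and rotate $F^2/G$ off the reals there. A minor additional point is that normality of $Y$ is what makes each $\an_{Y,z}$ a domain, which is what lets one write $G_z=u_zF_z$ with $u_z$ a unit and hence define $A$ as a unit-valued holomorphic function near $C$, so that the divisibility $H_z\mid F_z^2$ is reduced to the non-vanishing of the single holomorphic function $1+E^2B$.
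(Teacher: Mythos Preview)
Your argument is correct and follows the paper's strategy closely: factor $H$ on a neighbourhood $V$ of $C\setminus X$ as $F^2$ times a function that must be a unit along $C\setminus X$, use Lemma~\ref{red0} to prepare, then choose a positive scalar outside a countable bad set. The paper writes $H|_V=F^2(A+\lambda E_0^2F)$ with $A=G/F$ and sets $E=\sqrt{\lambda}\,E_0$; your factorisation $H|_V=F^2A(1+\lambda B)$ with $B=F/A$ is the same thing regrouped, with the Lemma~\ref{red0} unit absorbed into $G$ instead.

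The one substantive difference is how you treat the possible disconnectedness of $\R\setminus\alpha_i^{-1}(X)$: you plant a base point in \emph{every} connected component of $C_i\setminus X$, so that the imaginary part $q_i$ of $B\circ\alpha_i$ is not identically zero on any piece and hence $\{q_i=0\}$ is countable. The paper uses only one $z_i$ per $C_i$ and instead argues with the determinant $a_ig_i-b_if_i=\Im\bigl((\ol{A}E_0^2F)\circ\alpha_i\bigr)$; the implicit point is that $|F|^2\,\ol{A}E_0^2F=\ol{G}F^2E_0^2$ extends holomorphically to all of $Y$, so $|F\circ\alpha_i|^2(a_ig_i-b_if_i)$ is real-analytic on the connected domain $\R$ and has discrete zeros once nonzero at a single point, whence so does $a_ig_i-b_if_i$ on its actual domain. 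Your route is a valid alternative that makes the disconnectedness issue --- which the paper's ``$a_i,b_i\in\an(\R)$'' passes over --- fully explicit, at the price of a larger (but still discrete) auxiliary set $Z$; the paper's buys one base point per curve by noticing that the obstruction, cleared of the meromorphic factor $A$, is globally defined.
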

\begin{proof}
Fix $i\geq1$ and let $z_i\in C_i\setminus (X\cup\{F=0\}\cup\{G=0\})$. By Lemma \ref{red0} there exists an invariant holomorphic function $E_0\in\an(Y)$ with empty zero-set such that $(\ol{G}F^2E_0^2)(z_i)\in\C\setminus(\R\cup\sqrt{-1}\R)$ for each $i\geq1$. As $F_z\an_{Y,z}=G_z\an_{Y,z}$ for each $z\in C\setminus X$ there exists an open neighborhood $V\subset Y$ of $C\setminus X$ and $A\in\an(V)$ such that $G=AF$ and $\{A=0\}=\varnothing$. It holds 
$$
|F(z_i)|^2(\ol{A}FE_0^2)(z_i)=(\ol{A}\ol{F}F^2E_0^2)(z_i)=(\ol{G}F^2E_0^2)(z_i)\in\C\setminus(\R\cup\sqrt{-1}\R),
$$
so $(\ol{A}FE_0^2)(z_i)\in\C\setminus(\R\cup\sqrt{-1}\R)$ for each $i\geq1$.

For each $\lambda\in(0,+\infty)$ we consider $H:=FG+\lambda E_0^2F^3$. We have $
H|_V=F^2|_V(A+\lambda E_0^2|_VF|_V)$. Let us find $\lambda\in(0,+\infty)$ such that $B:=A+\lambda E_0^2|_VF|_V$ does not vanish on $C\setminus X$. Write $A\circ\alpha_i:=a_i+\sqrt{-1}b_i$ and $(E_0^2F)\circ\alpha_i=f_i+\sqrt{-1}g_i$ where $a_i,b_i,f_i,g_i\an(\R)$ and $f_i,g_i\neq0$. We have 
$$
B\circ\alpha_i=a_i+\sqrt{-1}b_i+\lambda(f_i+\sqrt{-1}g_i)=(a_i+\lambda f_i)+\sqrt{-1}(b_i+\lambda g_i).
$$
Consider the linear system $a_i+\sqrt{-1}b_i+\z(f_i+\sqrt{-1}g_i)=0$ that we rewrite as
\begin{equation}\label{system2}
\begin{cases}
a_i+\z f_i=0,\\
b_i+\z g_i=0.
\end{cases}
\end{equation}
The determinant of the previous linear system is $a_ig_i-b_if_i$, which is the imaginary part of $(\ol{A}FE_0^2)\circ\alpha_i$. As $(\ol{A}FE_0^2)(z_i)\in\C\setminus(\R\cup\sqrt{-1}\R)$, we deduce that $a_ig_i-b_if_i\neq0$. Let $\Ss_i:=\{a_ig_i-b_if_i=0\}$, which is a discrete set. If $t\in\R\setminus \Ss_i$, then the linear system \eqref{system2} is incompatible and $(B\circ\alpha_i)(t)\neq0$. If $t\in \Ss_i$, the linear system \eqref{system2} has rank $1$ (recall that $\{A=0\}=\varnothing$) and we define $\Tt_t$ as the maybe empty solution of the linear system \eqref{system2}. The set $\Ss:=\bigcup_{i\geq1}\bigcup_{t\in \Ss_i}\Tt_t$ is countable, so $(0,+\infty)\setminus\Ss\neq\varnothing$. If $\lambda\in(0,+\infty)\setminus\Ss$ and $t\in \Ss_i$, we have $\lambda\not\in\Tt_t$ and
$$
((A+\lambda E_0^2F)\circ\alpha_i)(t)=(a_i+\sqrt{-1}b_i)(t)+\lambda(f_i+\sqrt{-1}g_i)(t)\neq0.
$$
Thus, $\{A+\lambda E_0^2F=0\}\cap(C\setminus X)=\varnothing$ and $H_z$ divides $F_z^2$ for each $z\in C\setminus X$, as required.
\end{proof}

We are ready to prove Theorem \ref{maintool}.

\begin{proof}[Proof of Theorem \em\ref{maintool}]
The proof is conducted in several steps:

\noindent{\sc Step 1.} Denote $D:=X\cap C$. \em We may shrink $Y$ in such a way that the irreducible components of $\{F=0\}$ either meet $X$ or meet $C\setminus D$\em.

Define $D':=(\{F=0\}\cap C)\setminus D$, which is a discrete set because $F|_{C_i}\neq0$ for each $i\geq1$. As $X,D'$ are closed and disjoint sets, there exist disjoint open neighborhoods $W_1,V_1\subset Y$ of $X,D'$. For each $x\in C\setminus (D\cup D')$ we choose an open neighborhood $V^x$ that does not meet $\{F=0\}$. Define $\Omega_0:=W_1\cup V_1\cup \bigcup_{x\in C\setminus(D\cup D')}V^x$ and let $\Ss$ be an irreducible component of $\{F=0\}\cap \Omega_0$. Then
$$
\Ss=(\Ss\cap W_1)\cup(\Ss\cap V_1)\cup \bigcup_{x\in C\setminus(D\cup D')}(\Ss\cap V^x)=(\Ss\cap W_1)\cup(\Ss\cap V_1).
$$
As $W_1\cap V_1=\varnothing$ and $\Ss$ is irreducible, either $\Ss\cap W_1=\varnothing$ or $\Ss\cap V_1=\varnothing$. Thus, either $\Ss$ meets $C\setminus D$ or it meets $X$.

Let $\Omega_1\subset\Omega_0$ be an invariant open Stein neighborhood of $X\cup C$ in $Y$ and observe that it satisfies the required properties. Thus, it is enough to substitute $Y$ by $\Omega_1$.

\noindent{\sc Step 2.} \em Construction of invariant holomorphic functions $G,H_2:Y\to\C$ such that: 
\begin{itemize}
\item[(1)] $G\in\Sos_2\an^\sigma(Y)^2$, $\{G=0\}\cap C=D'$ and $\{G=0\}\cap \{F=0\}\cap X=\varnothing$.
\item[(2)] $G_z\an_{Y,z}=F_z\an_{Y,z}$ for each $z\in C\setminus D$.
\item[(3)] $H_2\in\Sos_2\an^\sigma(Y)^2$ and $H_{2,z}\an_{Y,z}=F^2_z\an_{Y,z}$ for each $z\in C\setminus D$.
\item[(4)] $\{H_2=0\}\cap(X\cup C)=\{F=0\}\cap (C\setminus D)=D'$.
\end{itemize}
\em

For each $y\in D'$ we have $\sigma(y)\neq y$. We write $D'=D'_0\cup\sigma(D'_0)$ where $D'_0\cap \sigma(D'_0)=\varnothing$. For each $y\in D'_0$ we denote the connected component of $\{F=0\}\cap Y$ that contains $y$ with $\Ss_y$. Shrinking $Y$ if necessary, we may assume the family $\{\Ss_y\}_{y\in D'_0}$ is locally finite and $\Ss_y\cap \Ss_{y'}=\varnothing$ if $y,y'\in D'_0$ and $y\neq y'$. Consider the coherent sheaf of ideals of $\an_Y$:
$$
\Jj_z:=\begin{cases}
F_z\an_{Y,z}&\text{if $z\in \bigcup_{y\in D'_0}\Ss_y$},\\
\an_{Y,z}&\text{otherwise}.
\end{cases}
$$
By \cite{coen} $\Jj$ is globally generated by three global sections $G_1,G_2,G_3:Y\to\C$. Define $Z:=C\cup(\{F=0\}\cap X)$, which is an invariant analytic set of real dimension $1$. For each $z\in Z$ there exists $k(z)\in\{1,2,3\}$ such that $(G_1,G_2,G_3)\an_{Y,z}=(G_{k(z)})\an_{Y,z}$. By Lemma \ref{red1} applied to $Z$ and $G_1,G_2,G_3$, there exists a global section $G_4$ of $\Jj$ such that $G_{4,z}$ divides $G_{1,z}, G_{2,z}, G_{3,z}$ for each $z\in Z$. In particular, $\Jj_z=G_4\an_{Y,z}$ for each $z\in Z$ and $\{G_4=0\}\cap\{F=0\}\cap X=\varnothing$. 

Define $G:=G_4\ol{G_4\circ\sigma}\in\Sos_2\an^\sigma(Y)^2$, which is invariant. As $C$ is invariant and $\{G_4=0\}\cap C=D_0'$, we have $\{\ol{G_4\circ\sigma}=0\}\cap C=\sigma(D_0')$, so 
$$
\{G=0\}\cap C=(\{G_4=0\}\cap C)\cup(\{\ol{G_4\circ\sigma}=0\}\cap C)=D'_0\cup\sigma(D'_0)=D'.
$$
In addition, $G_z\an_{Y,z}=F_z\an_{Y,z}$ for each $z\in D'$. This is because $\Jj_z=\an_{Y,z}$ for each $z\in\sigma(D'_0)$, so $\ol{G_4\circ\sigma}(z)\neq0$ for each $z\in D_0'$, and $\Jj_z=G_{4,z}\an_{Y,z}$ for each $z\in D_0'$.

If $x\in\{F=0\}\cap X$, we have $G(x)=(G_4\ol{G_4\circ\sigma})(x)=|G_4(x)|^2\neq 0$ because $\{G_4=0\}\cap \{F=0\}\cap X=\varnothing$. Thus, $G_z\an_{Y,z}=F_z\an_{Y,z}$ for each $z\in C\setminus D$ and $\{G=0\}\cap \{F=0\}\cap X=\varnothing$, so $G$ satisfies the properties (1) and (2).

Pick $z_i\in C_i\setminus(D\cup D')$ for each $i\geq1$. Let $\lambda_2,\lambda_3\in(0,+\infty)$ be such that $H:=G_1\ol{G_1\circ\sigma}+\lambda_2G_2\ol{G_2\circ\sigma}+\lambda_3G_3\ol{G_3\circ\sigma}$ satisfies $H(z_i)\neq0$ for each $i\geq1$. Thus, $H|_{C_i}\neq0$ for each $i\geq1$.

As $G_j\ol{G_j\circ\sigma}\in\Sos_2\an^\sigma(Y)^2$ is invariant, $\{H=0\}\cap X=\{G_j\ol{G_j\circ\sigma}=0, j=1,2,3\}\cap X$. As $\Jj_z=\an_{Y,z}$ for each $z\in X$, we deduce that $\{H=0\}\cap X=\varnothing$. As $G_z$ divides $H_z$ for each $z\in C$, there exists by Lemma \ref{red4} an invariant holomorphic function $E_2\in\an(Y)$ such that the invariant holomorphic function
$$
H_2:=G^2+E_2^2H^2\in\Sos_2\an^\sigma(Y)^2
$$
has the following property: \em $H_{2,z}$ divides $G_z^2$ for each $z\in C$\em, so $H_{2,z}\an_{Y,z}=G_z^2\an_{Y,z}$ for each $z\in C$ and $H_{2,z}\an_{Y,z}=G_z^2\an_{Y,z}=F_z^2\an_{Y,z}$ for each $z\in C\setminus D$. In particular, $\{H_2=0\}\cap(C\setminus D)=D'$. In addition,
$$
\{H_2=0\}\cap X=\{G=0,H=0\}\cap X=\{H=0\}\cap X=\varnothing,
$$
so $H_2$ satisfies the properties (3) and (4).

\noindent{\sc Step 3.} \em Construction of an invariant holomorphic function $H_1\in\Sos_3\an^\sigma(Y)^2$ such that: 
\begin{itemize}
\item[(1)] $\{H_1=0\}\cap(X\cup C)=\{H_1=0\}\cap C=D'$,
\item[(2)] $(FH_1)_x\an_{Y,x}=F_x\an_{Y,x}$ for each $x\in X$ and
\item[(3)] $(FH_1)_z\an_{Y,z}=F_z^2\an_{Y,z}$ for each $z\in C\setminus D$.
\end{itemize}
\em

By Lemma \ref{red5} there exists an invariant holomorphic function $E_1\in\an(Y)$ with empty zero-set such that the invariant holomorphic function $H_1:=G+E_1^2F^2\in\Sos_3\an^\sigma(Y)^2$ satisfies: $(FH_1)_z$ divides $F_z^2$ for each $z\in C\setminus D$, so $(FH_1)_z\an_{Y,z}=F_z^2\an_{Y,z}$ for each $z\in C\setminus D$ (because $G_z\an_{Y,z}=F_z\an_{Y,z}$ for each $z\in C\setminus D$). As $\{G=0\}\cap\{F=0\}\cap X=\varnothing$, we have $\{H_1=0\}\cap X=\{G=0,F=0\}\cap X=\varnothing$, so $(FH_1)_x\an_{Y,x}=F_x\an_{Y,x}$ for each $x\in X$. In particular, $\{H_1=0\}\cap C=D'$.

\noindent{\sc Step 4.} \em After shrinking $Y$, the invariant function $F':=\frac{FH_1}{H_2}$ is holomorphic, $\{F'=0\}\cap C\subset D$ and $F'_x\an_{Y,x}=F_x\an_{Y,x}$ for each $x\in X$\em.

Let us check: \em For each $z\in X\cup C$ the germ $F'_z$ is holomorphic\em. 

By {\sc Step 2} (3) we have $\{H_2=0\}\cap(X\cup C)=D'$, so $F'_z$ is a holomorphic germ for each $z\in X\cup (C\setminus D')$. If $z\in D'\subset C\setminus D$, we get:
\begin{itemize}
\item $H_{2,z}\an_{Y,z}=F^2_z\an_{Y,z}$ for each $z\in C\setminus D$ ({\sc Step 2} (3)).
\item $(FH_1)_z\an_{Y,z}=F_z^2\an_{Y,z}$ for each $z\in C\setminus D$ ({\sc Step 3} (3)).
\end{itemize} 
Thus, $H_{2,z}\an_{Y,z}=(FH_1)_z\an_{Y,z}$ for each $z\in C\setminus D$. Consequently, $F'_z$ is holomorphic for each $z\in C\setminus D$ and $F'_z$ is a unit for each $z\in C\setminus D$, so $\{F'=0\}\cap C\subset D$.

As $(FH_1)_x\an_{Y,x}=F_x\an_{Y,x}$ for each $x\in X$ ({\sc Step 3} (2)) and $H_2(x)\neq 0$ for each $x\in X$ ({\sc Step 2} (4)), we obtain $F'_x\an_{Y,x}=F_x\an_{Y,x}$ for each $x\in X$, as required.
\end{proof}

\subsection{Eliminating isolated zeros from the real part space}
Our next purpose is to reduce our problem to represent invariant holomorphic functions on $Y$ with empty zero-set whose restriction to $X$ is strictly positive as sum of squares of meromorphic functions.

\begin{thm}\label{discretezero-set}
Let $F\in\an(Y)$ be an invariant holomorphic function such that $F|_{X}\geq 0$ and $\{F=0\}\cap(X\cup C)$ is a discrete subset of $X$. Then, after shrinking $Y$ if necessary, there exist invariant holomorphic functions $H,G':Y\to\C$ such that $\{H=0\}=\varnothing$, $H|_X$ is strictly positive, $\{G'=0\}\cap X\subset\{F=0\}\cap X$ and $G'^2HF\in\Sos_8\an^\sigma(Y)^2$.
\end{thm}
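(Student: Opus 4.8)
The plan is to concentrate the whole obstruction at the discrete set $\{F=0\}\cap X=:\{x_k\}_{k\ge1}$, to solve Hilbert's 17th problem \emph{locally} at each $x_k$ by means of Theorem~\ref{localcase}, to complexify the resulting identities, and then to glue them into a global one using the Full Approximation Theorem~\ref{fa} together with the coherent-sheaf devices already employed above. The passage from $4$ to $8$ squares will come from writing $G'^2HF$ as the sum of a ``near the $x_k$'' contribution lying in $\Sos_4\an^\sigma(Y)^2$ and an ``away from the $x_k$'' contribution again lying in $\Sos_4\an^\sigma(Y)^2$, using the trivial inclusion $\Sos_4\an^\sigma(Y)^2+\Sos_4\an^\sigma(Y)^2\subset\Sos_8\an^\sigma(Y)^2$.

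First I would record the local data. For each $k\ge1$ the analytic germ $X_{x_k}$ is an analytic surface germ on which $F$ is positive semidefinite, so Theorem~\ref{localcase} gives analytic germs $h_{0,k},h_{1,k},\dots,h_{4,k}$ with $h_{0,k}^2F=h_{1,k}^2+\cdots+h_{4,k}^2$, with $h_{0,k}$ a sum of squares and $\{h_{0,k}=0\}\subset\{F=0\}$. Using $\an_{Y,x_k}=\an_{X,x_k}\otimes\C$, I extend these germs to an invariant Stein neighbourhood $U_k$ of $x_k$ in $Y$; as $Y$ is a complexification of $X$, the identity principle propagates the relation from $X\cap U_k$ to all of $U_k$, so one gets invariant holomorphic $H_{0,k},\dots,H_{4,k}\in\an(U_k)$ with $H_{0,k}^2F=H_{1,k}^2+\cdots+H_{4,k}^2$ on $U_k$, with $H_{0,k}\in\Sos\an^\sigma(U_k)^2$ and $\{H_{0,k}=0\}\cap X\subset\{x_k\}$. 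After shrinking $Y$ as in the proof of Theorem~\ref{movc}, I may assume the $U_k$ pairwise disjoint, $\{\cl(U_k)\}_k$ locally finite, and $Z:=\bigcup_k(\{H_{0,k}=0\}\cap U_k)$ an invariant closed analytic subset of $Y$.

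Next I would build a global denominator by gluing the $H_{0,k}$. Consider the coherent sheaf of ideals $\Ff$ of $\an_Y$ given by $\Ff_z:=H_{0,k,z}\an_{Y,z}$ for $z\in U_k$ and $\Ff_z:=\an_{Y,z}$ otherwise, whose zero-set is $Z$; by \cite{coen} it is globally generated by finitely many sections, and taking real and imaginary parts yields invariant ones. Arguing as in the proofs of Theorems~\ref{movc} and~\ref{maintool}, with the phase/genericity arguments of Lemmas~\ref{red0} and~\ref{red1} applied at chosen points $y_i\in C_i\setminus X$, I obtain an invariant $G'\in\an^\sigma(Y)$ such that $\Phi:=G'^2F$ satisfies: $\Phi|_X\ge0$; $\{\Phi=0\}\cap X=\{x_k\}_{k\ge1}=\{F=0\}\cap X$; $\{G'=0\}\cap X\subset\{F=0\}\cap X$; $G'|_{C_i}\ne0$ for every $i$; on each $U_k$ the germ of $G'$ is a unit times a power of $H_{0,k}$, so that $\Phi|_{U_k}\in\Sos_4\an^\sigma(U_k)^2$; and $\Phi|_X>0$ off $\bigcup_kU_k$.

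It remains to globalize, which is the main difficulty. Using Theorem~\ref{fa} I pick an invariant $H\in\an^\sigma(Y)$ with $\{H=0\}=\varnothing$, $H|_X>0$ and $H(y_i)\ne0$ for all $i$; near each $x_k$ one has $H=v_k^2$ for an invariant holomorphic unit $v_k$ because $H(x_k)>0$, so $HG'^2F=v_k^2\Phi$ is still a sum of $4$ squares of invariant holomorphic functions on each $U_k$, while $HG'^2F|_X>0$ off $\bigcup_kU_k$. One must now assemble these local sums of $4$ squares, together with the positive ``background'', into a single global identity $G'^2HF=(A_1^2+\cdots+A_4^2)+R$ with $A_1,\dots,A_4\in\an^\sigma(Y)$, where the first summand captures the $U_k$ to an order larger than the order of $\Phi$ at each $x_k$ (built by Cartan's Theorem~B) and the remainder $R\in\an^\sigma(Y)$ — still positive semidefinite on $X$, with $\{R=0\}\cap X\subset\{x_k\}_{k\ge1}$ and $R|_{C_i}\ne0$ — is in turn shown to lie in $\Sos_4\an^\sigma(Y)^2$ by a further use of Theorem~\ref{fa}; then $G'^2HF\in\Sos_8\an^\sigma(Y)^2$, and one finishes by enlarging $G'$ and $H$ if necessary while keeping $\{G'=0\}\cap X\subset\{F=0\}\cap X$ and $\{H=0\}=\varnothing$. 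This assembling is the hard step: since $\{F=0\}$ may have a branch through some $x_k$ lying, away from $x_k$, in the imaginary part $Y\setminus X$, $R$ need not be $\gtm_{x_k}$-primary and cannot merely be divided back into $\Phi$, so one must use the Full Approximation Theorem together with the nonvanishing correcting factor $H$ to force $R$ into $\Sos_4\an^\sigma(Y)^2$ while retaining positivity on $X$, the control of the real zero set, and the absence of zeros along $C$.
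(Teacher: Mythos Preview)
Your overall architecture matches the paper: solve the problem locally at each $x_k$ via Theorem~\ref{localcase}, glue the local denominators into a global $G'$ via the locally principal sheaf and \cite{coen}, and then globalize the numerators. The decisive step, however, is precisely the one you label ``the hard step'' and do not actually carry out. You write $G'^2HF=(A_1^2+\cdots+A_4^2)+R$ and then assert that $R\in\Sos_4\an^\sigma(Y)^2$ ``by a further use of Theorem~\ref{fa}''. Theorem~\ref{fa} only approximates continuous functions by invariant holomorphic ones; it does not produce sum-of-squares representations. Worse, your $R$ is allowed to vanish at the $x_k$, so showing $R\in\Sos_4$ is a problem of exactly the same type as the one you are trying to solve for $F$, and the argument becomes circular. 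Nothing in your proposal explains how the approximation theorem or the unit $H$ forces the remainder into $\Sos_4$.

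The paper avoids this trap by a different maneuver. After producing global $A_1,\dots,A_4$ that agree with the local numerators modulo $G^4F^2$ (via an extension lemma using Cartan's Theorem~B, not Theorem~\ref{fa}), it does \emph{not} try to control the remainder as a sum of squares. Instead it adds one explicit square $\mu^2G^4F^2$ and forms the quotient
\[
H_0:=\frac{A_1^2+\cdots+A_4^2+\mu^2G^4F^2}{G^2F},
\]
which is holomorphic near $X$ and, crucially, \emph{strictly positive everywhere on $X$}, including at the $x_k$ (there $H_0=1+\Psi G^2F+\mu^2G^2F$). Thus $G^2FH_0\in\Sos_5\an^\sigma(Y)^2$. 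The only remaining obstruction is that $H_0$ may vanish along $C$; this is removed by Theorem~\ref{maintool} (not Theorem~\ref{fa}), which trades $H_0$ for a nowhere-vanishing $H$ at the cost of multipliers $H_1,H_2\in\Sos_4\an^\sigma(Y)^2$. Since $\Sos_4$ is multiplicatively closed, $(G^2FH_0)H_1H_2$ splits as $\Sos_4\cdot\Sos_4+\Sos_1\cdot\Sos_4\subset\Sos_8$, and one takes $G':=GH_2$. The point you are missing is that dividing out $G^2F$ turns the problematic remainder into a \emph{zero-free} function on $X$, after which Theorem~\ref{maintool} (already proved) handles $C$.
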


Before proving the previous result, we need a preliminary lemma.

\begin{lem}\label{extension}
Let $\Phi:Y\to\C$ be an invariant holomorphic function. Let $U$ be an invariant open neighborhood of the connected components of $\Phi^{-1}(0)$ that meet $X\cup C$ and suppose that $U$ does not meet the other connected components of $\Phi^{-1}(0)$. For each invariant holomorphic function $B:U\to\C$ there exists an invariant holomorphic function $A:Y\to\C$ such that $\Phi|_U$ divides $A|_U-B$.
\end{lem}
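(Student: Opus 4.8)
The plan is to recast the statement as an extension problem for sections of the coherent quotient sheaf $\Qq:=\an_Y/\Phi\an_Y$ and to solve it via Cartan's Theorem B, taking care that everything is carried out $\sigma$-equivariantly. First I would organise the zero set of $\Phi$: write $\Phi^{-1}(0)=Z_1\cup Z_2$, where $Z_1$ is the union of those connected components of $\Phi^{-1}(0)$ meeting $X\cup C$ and $Z_2$ is the union of the remaining ones. Since the irreducible components of the analytic set $\Phi^{-1}(0)$ form a locally finite family, $Z_1$ and $Z_2$ are disjoint closed analytic subsets of $Y$, and since $\Phi$ and $X\cup C$ are invariant and $\sigma$ permutes the connected components of $\Phi^{-1}(0)$, both $Z_1$ and $Z_2$ are invariant. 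By hypothesis $Z_1\subset U$ and $U\cap Z_2=\varnothing$, hence $\Phi^{-1}(0)\cap U=Z_1$ and, in particular, $\Phi$ is nowhere zero on $U\setminus Z_1$. On a connected component of $Y$ on which $\Phi$ vanishes identically the conclusion is immediate (such a component is itself a connected component of $\Phi^{-1}(0)$: if it meets $X\cup C$ it lies in $Z_1\subset U$ and one takes $A:=B$ there; otherwise it lies in $Z_2$, is disjoint from $U$, and the divisibility condition is vacuous, so one takes $A:=0$). As these components are clopen in $Y$, I may discard them and assume that $\Phi$ vanishes identically on no connected component of $Y$; since $Y$ is normal its connected components are irreducible, so $\Phi$ is then a non-zero-divisor in $\an_Y$, multiplication by $\Phi$ identifies $\an_Y$ with the coherent ideal sheaf $\Phi\an_Y$, and $\Qq=\an_Y/\Phi\an_Y$ is a coherent sheaf with $\supp\Qq=\Phi^{-1}(0)$.

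Next I would produce a global invariant section $s$ of $\Qq$ restricting over $U$ to the class $\widehat B$ of $B$. The sets $U$ and $Y\setminus Z_1$ form an invariant open covering of $Y$ (because $Z_1\subset U$), with invariant intersection $U\setminus Z_1$, on which $\Qq$ vanishes since $\Phi$ is a unit there. Hence $\widehat B\in H^0(U,\Qq|_U)$ and the zero section of $\Qq|_{Y\setminus Z_1}$ agree on $U\setminus Z_1$ and glue to a section $s\in H^0(Y,\Qq)$; as $B$ and the zero section are invariant and the quotient map $\pi\colon\an_Y\to\Qq$ is $\sigma$-equivariant, $s$ is invariant. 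Since $Y$ is Stein and $\Phi\an_Y\cong\an_Y$ is coherent, Cartan's Theorem B gives $H^1(Y,\Phi\an_Y)=0$, so $\pi$ induces a surjection $\an(Y)\to H^0(Y,\Qq)$ and $s$ lifts to some $A_0\in\an(Y)$. Because $\pi$ is $\sigma$-equivariant and $s$ is invariant, the invariant function $A:=\Re(A_0)=\tfrac12(A_0+\ol{A_0\circ\sigma})$ still satisfies $\pi(A)=s$. Thus $A\in\an(Y)$ is invariant and its class over $U$ equals $\widehat B$.

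Finally I would unwind what $\pi(A)|_U=\widehat B$ means: $A|_U-B$ lies in the kernel $H^0(U,\Phi\an_U)$ of $\pi|_U$, and since $\Phi$ is a non-zero-divisor the multiplication-by-$\Phi$ isomorphism $\an_U\to\Phi\an_U$ yields $H^0(U,\Phi\an_U)=\Phi\cdot\an(U)$, so $A|_U-B=\Phi\, g$ for a single $g\in\an(U)$; that is, $\Phi|_U$ divides $A|_U-B$. Reassembling the components of $Y$ on which $\Phi$ vanishes identically, this proves the lemma. I expect the only real difficulty to be bookkeeping rather than any deep input: one must check that the extension-by-zero and gluing step is legitimate — this is exactly where the hypotheses $Z_1\subset U$, $U\cap Z_2=\varnothing$ and the description of $\supp\Qq$ are used — and that every object produced along the way ($Z_1$, the covering, $s$, the lift) is $\sigma$-invariant, so that the final $A$ may be taken invariant.
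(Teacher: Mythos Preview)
Your proposal is correct and follows essentially the same route as the paper: both arguments use the short exact sequence $0\to\Phi\an_Y\to\an_Y\to\an_Y/\Phi\an_Y\to0$, extend the class of $B$ by zero to a global section of the quotient sheaf (using that the part of $\supp(\an_Y/\Phi\an_Y)$ lying in $U$ is closed in $Y$), lift via Cartan's Theorem~B, and then replace the lift by its real part to obtain an invariant $A$. Your version is slightly more explicit about the gluing and about the degenerate case where $\Phi$ vanishes identically on a connected component, but these are refinements of presentation rather than a different strategy.
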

\begin{proof}
Consider the coherent sheaf of ideals $\Jj\subset\an_Y$ of $\an_Y$ generated by $\Phi$ and the exact sequence of coherent sheafs $0\rightarrow\Jj\rightarrow\an_Y\rightarrow\an_Y/\Jj\rightarrow0$. We have a corresponding diagram of cross sections:
$$
\xymatrix{
\Jj(Y)\ar[r]\ar[d]&\an(Y)\ar[r]\ar[d]&H^0(Y,\an_Y/\Jj)\ar[d]\\
\Jj(U)\ar[r]&\an(U)\ar[r]&H^0({U},\an_Y/\Jj)
}
$$
The upper right arrow is surjective by Cartan's Theorem B because $Y$ is a Stein space. The right vertical arrow is also surjective because each cross section of $\an_Y/\Jj$ on $U$ can be extended by zero to $Y$ because the support of $\an_Y/\Jj$ in $U$ is closed in $Y$. Thus, we have a linear surjective homomorphism $
\varphi:\an(Y)\longrightarrow\an(U)/\Jj(U)\equiv H^0({U},\an_Y/\Jj)$. Let $F\in\an(Y)$ be such that $\varphi(F)=B$. We have $F|_U-B\in\Jj(U)$, so there exists $\Lambda\in\an(U)$ such that $F|_U-B=\Lambda\Phi|_U$. Take $A=\Re(F)$ and observe that
$$
A|_U-B=\Re(F|_U)-B=\Re(F|_U-B)=\Re(\Lambda\Phi|_U)=\Re(\Lambda)\Phi|_U\in\Jj(U),
$$ 
as required.
\end{proof}

We are ready to prove Theorem \ref{discretezero-set}.

\begin{proof}[Proof of Theorem \em\ref{discretezero-set}]
The proof is conducted in several steps:

\noindent{\sc Step 0.} \em Initial preparation\em. For each $p\in\{F=0\}\cap X$ there exist by Theorem \ref{localcase} analytic function germs $h_{0,p},h_{1,p},h_{2,p},h_{3,p},h_{4,p}\in\an_{X,p}$ such that $h_{0,p}^2f_p=h_{1,p}^2+h_{2,p}+h_{3,p}+h_{4,p}^2$ and $h_{0,p}$ is a sum of squares in $\an_{X,p}$ with $\{h_{0,p}=0\}\subset\{f_p=0\}\subset\{p\}$. Multiplying the previous expression by $h_{0,p}^2$, we may assume that $h_{0,p}$ is a square. Thus, for each $p\in\{F=0\}\cap X$ there exist an invariant open neighborhood $U^p\subset Y$ and invariant holomorphic functions $H_{i,(p)}\in\an(U^p)$ such that $H_{0,(p)}^2F|_{U^p}=H_{1,(p)}^2+H_{2,(p)}^2+H_{3,(p)}^2+H_{4,(p)}^2$, $H_{0,(p)}$ is the square of an invariant holomorphic function and $\cl(\{H_{0,(p)}=0\})\cap X\subset\{p\}$. We need in addition that $\cl(\{H_{0,(p)}=0\})\cap C\subset\{p\}$. To that end it is enough to check: \em We may assume $H_{0,(p)}$ is not identically zero on any of the irreducible components of the germ of $C$ at $p$\em.

Identify $Y_p$ with an analytic model contained in $\C^n_p$ for some $n\geq1$. For each unitary vector $v\in\sph^n$ consider the holomorphic map $\Phi_v:U^p\to\C^n, x\mapsto x+vF^2(x)$. If $U^p$ is small enough, $\Phi_v$ is a holomorphic diffeomorphism onto its image and $F\circ\Phi_v=FE^2$ where $E\in\an(U^p)$ is an invariant holomorphic function such that $\{E=0\}=\varnothing$ (use for instance Taylor's expansion of $F((\x_1,\ldots,\x_n)+\z(\y_1,\ldots,\y_n))$ at $(\x_1,\ldots,\x_n)$ to check the latter condition). As $F|_{C\setminus X}$ does not vanish, we may choose $v\in\R^n$ such that $H_{0,(p)}\circ\Phi_v$ is not identically zero on any of the irreducible components of the germ of $C$ at $p$. Thus, it is enough to substitute $H_{0,(p)}$ by $(H_{0,(p)}\circ\Phi_v)E$ and $H_{i,(p)}$ by $H_{i,(p)}\circ\Phi_v$ for $i=1,2,3,4$.

We may assume in addition $\cl(U^p)\cap\cl(U^q)=\varnothing$ if $p\neq q$ and $\{\cl(U^p)\}_{p\in\{F=0\}\cap X}$ is a locally finite family. The union $T:=\bigcup_{p\in\{F=0\}\cap X}\cl(\{H_{0,(p)}=0\})\setminus U^p$ is a closed subset of $Y$ that does meet $X\cup C$. If we substitute $Y$ by an invariant open Stein neighborhood of $X\cup C$ contained in the open set $U:=Y\setminus T$, we may assume $\{H_{0,(p)}=0\}$ is a closed subset of $Y$ for each $p\in\{F=0\}\cap X$. 

\noindent{\sc Step 1.} \em Global denominator\em. Consider the locally principal coherent sheaf of ideals defined by:
$$
\Jj_z=\left\{
\begin{array}{ll}
H_{0,(p),z}\an_{Y,z}&\text{if $z\in U^p$}\\
\an_{Y,z}&\text{if $z\in Y\setminus\bigcup_{p\in\{F=0\}\cap X}U^p$}.
\end{array}
\right.
$$
By \cite{coen} there exist holomorphic functions $G_1,G_2,G_3\in\an(Y)$ that generate $\Jj$. As each $H_{0,(p),z}$ is invariant, $\ol{G_1\circ\sigma},\ol{G_2\circ\sigma},\ol{G_3\circ\sigma}\in H^0(Y,\Jj)$. Thus, we may assume there exist $6$ invariant holomorphic functions $G_1,\ldots,G_6\in\an(Y)$ that generate $\Jj$. We choose $\lambda_1,\ldots,\lambda_6\in(0,+\infty)$ such that $G_0:=\sum_{i=1}^6\lambda_iG_i^2$ is not identically zero on any of the irreducible components $C_i$ of $C$. By Theorem \ref{maintool} there exists (after shrinking $Y$ if necessary) an invariant holomorphic function $G\in\an(Y)$ such that $G|_X\geq0$, $\{G=0\}\cap C\subset X\cap C$ and $G_x\an_{Y,x}=G_{0,x}\an_{Y,x}$ for each $x\in X$. In particular, $\{G=0\}\cap(X\cup C)\subset\{F=0\}\cap X$.

Fix $p\in\{F=0\}\cap X$. After shrinking $U^p$ if necessary, there exists an invariant unit $E_p\in\an(U^p)$ such that $G|_{U^p}=H_{0,(p)}E_p$. Thus, on $U^p$ we have
$$
G^2F=E_p^2H_{0,(p)}^4F=\sum_{k=1}^4(E_pH_{0,(p)}H_{k,(p)})^2
$$
and we substitute $H_{k,(p)}$ by $E_pH_{0,(p)}H_{k,(p)}$ for $k=1,2,3,4$.

\noindent{\sc Step 2.} \em Global sum of squares\em. Define $U:=\bigcup_{p\in \{F=0\}\cap X}U^p$ and consider the invariant holomorphic function 
$$
B_k:U\to\C,\ z\mapsto B_k(z)= H_{k,(p)}\quad\text{if $z\in U^p$}
$$
for $k=1,2,3,4$. We have $G^2F=\sum_{k=1}^4B_k^2$ on $U$.

By Lemma \ref{extension} there exist invariant holomorphic functions $A_1,A_2,A_3,A_4\in\an(Y)$ such that $G^4F^2$ divides $A_k|_U-B_k$ for $k=1,2,3,4$. On the open set $U$ we obtain
$$
\sum_{k=1}^4A_k^2-G^2F=\sum_{k=1}^4A_k^2-\sum_{k=1}^4B_k^2=\sum_{k=1}^4(A_k^2-B_k^2).
$$
As $G^4F^2$ divides each $(A_k-B_k)(A_k+B_k)=A_k^2-B_k^2$ in $\an(U)$, it also divides $\sum_{k=1}^4A_k^2-G^2F$ in $\an(U)$. Thus, there exists an invariant holomorphic function $\Psi:U\to\C$ such that
$$
\sum_{k=1}^4A_k^2-G^2F=G^4F^2\Psi\quad\leadsto\quad\sum_{k=1}^4A_k^2=G^2F(1+\Psi G^2F)
$$
in $\an(U)$. As $1+\Psi G^2F$ does not vanish at any point $p\in\{F=0\}\cap X$, we may assume (after shrinking $U$ if necessary) that $1+\Psi G^2F$ is a holomorphic unit in $\an(U)$.

\noindent{\sc Step 3.} \em Additional square: There exists $\mu\in(0,+\infty)$ such that the function
$$
H_0:=\frac{\sum_{k=1}^4A_k^2+\mu^2G^4F^2}{G^2F}
$$
is holomorphic on an open neighborhood of $X$ in $Y$, its restriction $H_0|_{X}$ is strictly positive and $H_0|_{C_i}\neq0$ for each $i\geq1$\em. In particular, $G^2FH_0\in\Sos_5\an^\sigma(Y)^2$.

For each $\mu\in(0,+\infty)$ the zero-set of $\sum_{k=1}^4A_k^2+\mu^2G^4F^2$ in $X$ is contained in $\{F=0\}\cap X$. Thus, outside $\{F=0\}\cap X$, the restriction $H_0|_{X}$ is strictly positive. In addition, on $U\cap X$ 
$$
\frac{\sum_{k=1}^4A_k^2}{G^2F}+\mu^2G^2F=1+\Psi G^2F+\mu^2G^2F
$$
is a strictly positive analytic function, so $H_0|_X$ is a strictly positive analytic function. Pick $z_i\in C_i\setminus X$ for each $i\geq1$. As $G^2F$ does not vanish at any point of $C\setminus X$, there exists $\mu\in(0,+\infty)$ such that $H_0(z_i)\neq0$ for each $i\geq1$, so $H_0|_{C_i}\neq0$ for each $i\geq1$.
 
\noindent{\sc Step 4.} \em Eliminating extra zeros and conclusion\em. By Theorem \ref{maintool} applied to $H_0$ there exist, after shrinking $Y$ if necessary, invariant holomorphic functions $H,H_1,H_2:Y\to\C$ such that
\begin{itemize}
\item[(i)] $H_1,H_2\in\Sos_4\an^\sigma(Y)^2$.
\item[(ii)] $\{H=0\}\cap C\subset X\cap C$ and $\{H_\ell=0\}\cap X=\varnothing$ for $\ell=1,2$.
\item[(iii)] $H_0H_1=HH_2$ and $H_x\an_{Y,x}=H_{0,x}\an_{Y,x}$ for each $x\in X$.
\end{itemize}
By (ii) and (iii) we conclude that $H|_X$ is strictly positive and $\{H=0\}\cap(X\cup C)=\varnothing$, so, after shrinking $Y$ if necessary, we may assume $\{H=0\}=\varnothing$. Next, we have
$$
(GH_2)^2HF=HH_2G^2FH_2=(G^2FH_0)H_1H_2\in\Sos_8\an^\sigma(Y)^2
$$
and to finish it is enough to take $G':=GH_2$.
\end{proof}

\subsection{Analytic functions that are locally a square}
We prove next that a real analytic function on a $C$-analytic space of dimension $d$ that is locally a square can be represented as a sum of $d+1$ squares of analytic functions.

\begin{lem}\label{lsr}
Let $(X,\an_X)$ be a $C$-analytic space of dimension $d$ and $f\in\an(X)$ such that the analytic function germ $f_x$ is a square in $\an_{X,x}$ for each $x\in X$. Then there exist $g_1,\ldots,g_{d+1}\in\an(X)$ such that $f=g_1^2+\cdots+g_{d+1}^2$.
\end{lem}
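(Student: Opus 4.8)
The idea is to build a coherent sheaf of ideals $\Ii\subseteq\an_X$ which is a ``square root'' of $f\an_X$, to generate it by $d+1$ global \emph{real} analytic functions, and then to divide out a strictly positive analytic factor that is itself a single square. By a standard reduction I may assume $f$ vanishes on no irreducible component of $X$, i.e. $f_x\neq0$ for every $x\in X$: write $X=X'\cup X''$, where $X'$ is the union of the components on which $f\equiv0$ and $X''$ the union of the others; on $X'$ one has $f|_{X'}=0^2+\dots+0^2$, over $X''$ one runs the argument below, and the two representations glue because $f$ (hence the generators of $\Ii$ involved) vanish on $X'\cap X''$ and because $H^1(X,\an_X)=0$ (Cartan's Theorem B on an invariant Stein complexification of $X$) makes the Mayer--Vietoris map $\an(X)\to\{(\phi',\phi'')\in\an(X')\times\an(X''):\phi'|_{X'\cap X''}=\phi''|_{X'\cap X''}\}$ surjective.

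Under this assumption $f_x=h_x^2$ for some nonzero $h_x\in\an_{X,x}$, unique up to sign, and I set $\Ii_x:=h_x\an_{X,x}$. This is invariant, coherent and locally principal (on a small $U\ni x$ on which $f|_U=H^2$ one has $\Ii|_U=H\an_U$), and $\Ii^2=f\an_X$. By \cite{coen}, in its real-analytic form (a $\sigma$-equivariant argument on a Stein complexification of $X$ producing \emph{invariant} sections), the locally principal coherent ideal sheaf $\Ii$ on the $d$-dimensional $C$-analytic space $X$ is generated by $d+1$ global sections $g_1,\dots,g_{d+1}\in\an(X)$. Put $g:=g_1^2+\dots+g_{d+1}^2\in\Sos_{d+1}\an(X)^2$. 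Writing $g_{i,x}=h_xv_{i,x}$ at each $x$ (possible since $g_{i,x}\in\Ii_x$) gives $g_x=f_x\sum_iv_{i,x}^2$; since $\{f\neq0\}$ is dense in $X$, the function $u:=g/f$ is analytic on $\{f\neq0\}$ and admits near every $x$ the nonnegative analytic continuation $\sum_iv_{i,x}^2$ (which agrees with $g/f$ on the dense set $\{f\neq0\}$, hence is well defined), so $u\in\an(X)$ and $u\geq0$. By Nakayama the images of $g_{1,x},\dots,g_{d+1,x}$ span $\Ii_x/\gtm_{X,x}\Ii_x\neq0$, so some $v_{i,x}$ is a unit and $u(x)>0$; thus $u>0$ on $X$. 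A strictly positive analytic function has a strictly positive analytic square root $\sqrt u\in\an(X)$ (compose $u$ with the real-analytic square root on $(0,\infty)$), which is a unit of $\an(X)$. Hence
$$
f=g\,u^{-1}=\Big(\sum_{i=1}^{d+1}g_i^2\Big)(\sqrt u)^{-2}=\sum_{i=1}^{d+1}\Big(\frac{g_i}{\sqrt u}\Big)^2,\qquad\frac{g_i}{\sqrt u}\in\an(X),
$$
which is the desired representation of $f$ as a sum of $d+1$ squares.

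I expect the main obstacle to be the generation step: the $d+1$ generators of $\Ii$ must be invariant, because passing to the real and imaginary parts of complex generators would double the count and only yield $2d+2$ squares. So the crux is an invariant version of Coen's theorem, equivalently a $\sigma$-equivariant Forster--Ramspott-type general-position argument, producing $d+1$ invariant global generators of a locally principal coherent ideal sheaf on a $C$-analytic space of dimension $d$. The remaining ingredients --- coherence and local principality of $\Ii$, the Nakayama computation, the gluing in the reduction, and the analyticity of $\sqrt u$ --- are routine.
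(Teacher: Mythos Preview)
Your proof is essentially the paper's: define the locally principal coherent ideal sheaf $\Ii$ of local square roots of $f$, invoke \cite{coen} for $d{+}1$ global generators in $\an(X)$, observe that $\sum_i h_i^2$ and $f$ differ by a strictly positive analytic unit, and absorb its analytic square root into the $h_i$. The paper obtains the $d{+}1$ real generators by a bare citation of \cite{coen} without addressing the invariance issue you flag---and in the proof of Theorem~\ref{discretezero-set} it even doubles to $2d{+}2$ when it needs invariant generators on the complexification---so your identified crux is genuine but not resolved in the paper either; your preliminary reduction to components on which $f\not\equiv0$ is an extra precaution the paper simply omits.
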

\begin{proof}
For each $x\in X$ we pick a connected neighborhood $U\subset X$ such that $f|_U=g_U^2$ for some $g_U\in\an(U)$. We collect all the previous open sets to construct an open covering ${\mathfrak U}$ of $X$. If $y\in U_1\cap U_2$, the transition function $g_{12}:=g_{U_1}g_{U_2}^{-1}=\pm1$ is locally constant. Consider the coherent sheaf of ideals $\Ii_x:=g_{U,x}\an_{X,x}$ if $x\in U$. As $(X,\an_X)$ is a $C$-analytic space, it has an invariant Stein complexification $(Y,\an_Y)$ to which $\Ii$ extends as a coherent sheaf of ideals. As $X$ has dimension $d$, there exist by \cite{coen} $h_1,\ldots,h_{d+1}\in\an(X)$ such that $\Ii=(h_1,\cdots,h_{d+1})\an_X$. Thus, there exists a strictly positive analytic function $u\in\an(X)$ such that $f=u(h_1^2+\cdots+h_{d+1}^2)$. Let $v\in\an(X)$ be a strictly positive analytic function such that $u=v^2$. If we define $g_i:=vh_i$ for $i=1,\ldots,d+1$, we obtain $f=g_1^2+\cdots+g_{d+1}^2$, as required.
\end{proof}

Let $(Y,\an_Y)$ be a reduced complex analytic space. Denote the set of points $y\in Y$ at which $\an_{Y,y}$ is not a normal ring with $B(Y)$. By \cite[Ch.VI.Thm.5]{n1} the set $B(Y)$ is an analytic subset of $Y$. Let us show how we can construct an universal denominator for the coherent sheaf $\widetilde{\an}_Y$ of weakly holomorphic functions on $Y$. Assume in the following that $(Y,\an_Y)$ is a Stein space endowed with an anti-involution $\sigma:Y\to Y$ and let $(X,\an_X)$ be its real part space. Observe that $B(X)=B(Y)\cap X$ (see \S\ref{mainr}). Let $((\widehat{Y},\an_{\widehat{Y}}),\pi)$ be the normalization of $(Y,\an_Y)$ and recall that $\an_{\widehat{Y}}=\pi^*(\widetilde{\an}_Y)$ where $\widetilde{\an}_Y$ is the sheaf of weakly holomorphic functions on $Y$. Let $\widehat{\sigma}:\widehat{Y}\to\widehat{Y}$ be the anti-involution of $\widehat{Y}$ induced by $\sigma$, which satisfies $\pi\circ\widehat{\sigma}=\sigma\circ\pi$. The following result is inspired by \cite[E.73.a]{kk}.

\begin{lem}[Optimal universal denominator]\label{ud}
There exists an invariant holomorphic function $D\in\an(Y)$ such that $\{D|_X=0\}$ is contained in $X\cap B(Y)$, $D_y$ is not a zero divisor of $\an_{Y,y}$ and $D_yF_y\in\an_{Y,y}$ for each $F_y\in\widetilde{\an}_{Y,y}$ and each $y\in Y$.
\end{lem}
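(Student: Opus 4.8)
The plan is to take $D$ to be a sum of squares, with generic strictly positive real coefficients, of a finite family of invariant global generators of the conductor ideal sheaf of the normalization $\pi\colon\widehat{Y}\to Y$. First I would set up the conductor sheaf $\Ii\subset\an_Y$, with stalks $\Ii_y:=\{a\in\an_{Y,y}:\ a\,\widetilde{\an}_{Y,y}\subset\an_{Y,y}\}$; since $\widetilde{\an}_Y$, hence $\widetilde{\an}_Y/\an_Y$, is coherent, $\Ii$ is a coherent sheaf of ideals and $\supp(\an_Y/\Ii)=B(Y)$, because $\an_{Y,y}$ is normal exactly when $\an_{Y,y}=\widetilde{\an}_{Y,y}$, i.e. exactly when $\Ii_y=\an_{Y,y}$. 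As $\sigma$ lifts to the anti-involution $\widehat{\sigma}$ of $\widehat{Y}$ with $\pi\circ\widehat{\sigma}=\sigma\circ\pi$, the sheaf $\Ii$ is $\sigma$-invariant, so $\Re(G)$ and $\Im(G)$ are sections of $\Ii$ whenever $G$ is; thus, using \cite{coen} and taking real and imaginary parts, I obtain finitely many invariant holomorphic functions $D_1,\ldots,D_m\in\an(Y)$ that generate $\Ii$. By Cartan's Theorem A their germs generate $\Ii_y$ at every $y$, so $\{D_1=\cdots=D_m=0\}=\supp(\an_Y/\Ii)=B(Y)$.

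Next I would reduce the non-zero divisor requirement to a countable condition on the normalization. For each $y\in Y$ the ring of weakly holomorphic germs $\widetilde{\an}_{Y,y}$ is the finite product $\prod_{\widehat{y}\in\pi^{-1}(y)}\an_{\widehat{Y},\widehat{y}}$ of the (integral) stalks of the normal space $\widehat{Y}$, it contains $\an_{Y,y}$ and has the same total ring of fractions; hence a germ $D_y\in\an_{Y,y}$ is a non-zero divisor of $\an_{Y,y}$ if and only if its image in $\an_{\widehat{Y},\widehat{y}}$ is nonzero for every $\widehat{y}\in\pi^{-1}(y)$, that is, if and only if $(D\circ\pi)_{\widehat{y}}\neq0$ for every $\widehat{y}\in\widehat{Y}$. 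Since $\widehat{Y}$ is normal, its connected components $\{\widehat{Y}_k\}_k$ are irreducible, so by the identity principle $D\circ\pi$ has a non-vanishing germ at every point of $\widehat{Y}$ if and only if $D\circ\pi\not\equiv0$ on each $\widehat{Y}_k$. For each $k$ I would pick $\widehat{z}_k\in\widehat{Y}_k$ with $\pi(\widehat{z}_k)\notin\Sing(Y)$ (possible because $\pi(\widehat{Y}_k)$ is an irreducible component of $Y$, not contained in the nowhere dense analytic set $\Sing(Y)$); then $\pi(\widehat{z}_k)\notin B(Y)$, so $D_i(\pi(\widehat{z}_k))\neq0$ for some $i$, and $L_k:=\{\mu\in\R^m:\ \sum_{i=1}^m\mu_iD_i(\pi(\widehat{z}_k))^2=0\}$ is a hyperplane of $\R^m$.

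Finally, as $\{\widehat{Y}_k\}_k$ is countable and $\R^m$ is a Baire space, I would choose $(\mu_1,\ldots,\mu_m)\in(0,+\infty)^m\setminus\bigcup_k L_k$ and set $D:=\sum_{i=1}^m\mu_iD_i^2\in\an(Y)$. Then $D$ is invariant; each $D_{i,y}^2$ lies in the ideal $\Ii_y$, so $D_y\in\Ii_y$ and $D_yF_y\in\an_{Y,y}$ for every $F_y\in\widetilde{\an}_{Y,y}$ and every $y\in Y$; for $x\in X$ one has $D_i(x)\in\R$, so $D(x)=\sum_i\mu_iD_i(x)^2=0$ forces $D_i(x)=0$ for all $i$, whence $\{D|_X=0\}=B(Y)\cap X\subset X\cap B(Y)$; and $(D\circ\pi)(\widehat{z}_k)=\sum_i\mu_iD_i(\pi(\widehat{z}_k))^2\neq0$ for each $k$, so $D\circ\pi\not\equiv0$ on every connected component of $\widehat{Y}$, which by the previous paragraph means $D_y$ is a non-zero divisor of $\an_{Y,y}$ for every $y\in Y$.

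The main obstacle is to secure \emph{simultaneously} the non-zero divisor property and the inclusion $\{D|_X=0\}\subset X\cap B(Y)$. A plain generic linear combination of the $D_i$ is readily a non-zero divisor, but it will in general vanish on $X$ away from $B(Y)$; conversely the bare sum of squares $\sum_iD_i^2$ controls $\{D|_X=0\}$ automatically, yet it may vanish identically on an irreducible component of $Y$, since over $\C$ a sum of squares of holomorphic functions can cancel. Taking a sum of squares with generic strictly positive real weights reconciles both once the non-zero divisor condition is transported to the locally irreducible normalization, where it becomes a countable family of linear constraints on the weights. The remaining ingredients — coherence and $\sigma$-invariance of the conductor, existence of finitely many invariant global generators, and $\supp(\an_Y/\Ii)=B(Y)$ — are routine.
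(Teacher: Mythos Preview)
Your argument is correct and follows essentially the same route as the paper: set up the conductor sheaf $\Jj=(\an_Y:\widetilde{\an}_Y)$, produce finitely many global sections whose common zero locus is $B(Y)$, and combine them into a sum-of-squares expression that is automatically nonnegative on $X$ and does not vanish identically on any irreducible component of $Y$. The only differences are tactical --- the paper interpolates values via \cite[\S52.5]{kk} at one point per connected component of $Y\setminus\Sing(Y)$ and forms $D=\sum_iD_i\,\ol{D_i\circ\sigma}$, whereas you take invariant generators from \cite{coen} and form $D=\sum_i\mu_iD_i^2$ with generic positive weights chosen by a Baire argument --- but the mechanism and conclusion are the same (note only that your $L_k$ is a proper real-linear subspace of $\R^m$, possibly of codimension $2$ rather than a hyperplane, since $D_i(\pi(\widehat{z}_k))^2$ may be non-real; this does not affect the Baire argument).
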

\begin{proof}
As $\an_Y$ and $\widetilde{\an}_Y$ are coherent sheaves of $\an_Y$-modules,
$$
\Jj_y:=(\an_{Y,y}:\widetilde{\an}_{Y,y}):=\{\xi_y\in\an_{Y,y}:\ \xi_y\widetilde{\an}_{Y,y}\subset\an_{Y,y}\}
$$
is a coherent sheaf of ideals of $\an_Y$. It holds that $\Jj_y=\an_{Y,y}$ if and only if $y\in Y\setminus B(Y)$, so $B(Y)$ is the support of the coherent sheaf of ideals $\an_Y/\Jj$. Pick points $y_m$ in each connected component $M_m$ of $Y\setminus\Sing(Y)$. As the family $\{Y_m\}_{m\geq1}$ is locally finite, $\{y_m\}_{m\geq1}$ is a discrete set. By \cite[\S52.5]{kk} there exists $D'\in H^0(Y,\Jj)$ such that $D'(y_m)=1$ and $D'(\sigma(y_m))=1$ for each $m\geq1$. By the Identity principle $D'_y(\ol{D'\circ\sigma})_y$ is a non-zero divisor of $\an_{Y,y}$ for each $y\in Y$. In addition, $D'_y(\ol{D'\circ\sigma})_yF_y\in\an_{Y,y}$ for each $F_y\in\widetilde{\an}_{Y,y}$ and each $y\in Y$. As $B(Y)$ is invariant, there exist (using inductively \cite[\S52.5]{kk}) finitely many holomorphic functions $D_i\in H^0(Y,\Jj)$ satisfying the same properties as $D'$ such that $B(Y)=\{D_1\ol{D_1\circ\sigma}=0,\ldots,D_r\ol{D_r\circ\sigma}=0\}$. Define $D:=\sum_{i=1}^rD_i\ol{D_i\circ\sigma}$ and observe that $D(y_m)=r\neq0$ for each $m\geq1$. The reader can check readily that $D$ satisfies the required conditions.
\end{proof}

A universal denominator satisfying the conditions of Lemma \ref{ud} will be called \em optimal universal denominator\em.

\begin{cor}\label{lsq}
Let $F\in\an(\widehat{Y})$ be an invariant holomorphic function such that $\{F=0\}=\varnothing$ and $D$ an optimal universal denominator. Then there exists an invariant holomorphic function $A\in\an(Y)$ such that $(D\circ\pi)^2F=A\circ\pi$ and $A_x$ is a square of $\an_{Y,x}$ for each $x\in X$.
\end{cor}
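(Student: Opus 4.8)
The plan is to realize $F$ as a global weakly holomorphic function on $Y$, clear its denominator with the optimal universal denominator $D$, and exploit the elementary fact that a unit of a complex analytic local ring is a square in order to make the resulting function a square along $X$.

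First I would use the identification $\widetilde{\an}_Y=\pi_*\an_{\widehat{Y}}$ (equivalent to $\an_{\widehat{Y}}=\pi^*(\widetilde{\an}_Y)$, and valid because $\pi$ is finite) to view $F\in\an(\widehat{Y})=H^0(Y,\widetilde{\an}_Y)$ as the global weakly holomorphic function $\widetilde{F}$ on $Y$ determined by $\widetilde{F}\circ\pi=F$ on the dense open set $\widehat{Y}\setminus\pi^{-1}(\Sing(Y))$; this equality then extends to all of $\widehat{Y}$ by the identity principle. A direct check on $Y\setminus\Sing(Y)$ using $\pi\circ\widehat{\sigma}=\sigma\circ\pi$ (see \S\ref{casai}) shows that $\widetilde{F}$ is invariant. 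Since $D$ is an optimal universal denominator, $D\cdot\widetilde{\an}_{Y,y}\subset\an_{Y,y}$ for every $y\in Y$, so $A:=D^2\widetilde{F}=D\cdot(D\widetilde{F})$, a priori only a global section of the $\an_Y$-module $\widetilde{\an}_Y$, actually lies in $\an(Y)$; moreover $A$ is invariant because $D$ and $\widetilde{F}$ are. Pulling back along $\pi$ and using $\widetilde{F}\circ\pi=F$ gives $A\circ\pi=(D\circ\pi)^2(\widetilde{F}\circ\pi)=(D\circ\pi)^2F$, the asserted identity in $\an(\widehat{Y})$.

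The remaining point --- and the only one that uses the hypothesis $\{F=0\}=\varnothing$ --- is that $A_x$ is a square in $\an_{Y,x}$ for each $x\in X$. Fixing such an $x$ and writing $\pi^{-1}(x)=\{\widehat{x}_1,\dots,\widehat{x}_k\}$, one has $\widetilde{\an}_{Y,x}=\bigoplus_{i=1}^k\an_{\widehat{Y},\widehat{x}_i}$, and the germ $F_{\widehat{x}_i}$, that is, the $i$-th component of $\widetilde{F}_x$, is a unit of $\an_{\widehat{Y},\widehat{x}_i}$ because $F$ does not vanish at $\widehat{x}_i$. A unit of a complex analytic local ring is a square (lift it to a unit of a convergent power series ring and extract a square root via the binomial series, or invoke Hensel's lemma), so $\widetilde{F}_x=\eta_x^2$ for some $\eta_x\in\widetilde{\an}_{Y,x}$; applying the universal denominator property once more gives $D_x\eta_x\in\an_{Y,x}$, whence $A_x=D_x^2\widetilde{F}_x=(D_x\eta_x)^2$ is a square in $\widetilde{\an}_{Y,x}$, and therefore in $\an_{Y,x}$, since $\an_{Y,x}\hookrightarrow\widetilde{\an}_{Y,x}$ ($Y$ being reduced).

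I do not expect a genuine obstacle: the argument is a short assembly of already-available facts. The one delicate point is the bookkeeping of the two factors of $D$ --- one of them is absorbed into the weakly holomorphic square root $\eta$ of $F$ and then squared, which is precisely why $D^2F$, and not $DF$, is what simultaneously descends to $\an(Y)$ and is a square along $X$ --- together with making sure the resulting $A$ comes out invariant.
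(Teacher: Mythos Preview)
Your proof is correct and follows essentially the same approach as the paper: descend $(D\circ\pi)^2F$ to $A\in\an(Y)$ via the universal denominator, then locally over each $x\in X$ take a square root of the nonvanishing $F$ on the finitely many preimage germs and multiply by $D$ to descend that square root into $\an_{Y,x}$. The only difference is cosmetic---you phrase the local step in terms of the direct-sum decomposition $\widetilde{\an}_{Y,x}\cong\bigoplus_i\an_{\widehat{Y},\widehat{x}_i}$, while the paper works with disjoint open neighborhoods $W_i$ of the $\widehat{x}_i$ and glues the local square roots there; one minor wording quibble is that your final clause should say that the \emph{equality} $A_x=(D_x\eta_x)^2$, established in $\widetilde{\an}_{Y,x}$, descends to $\an_{Y,x}$ by injectivity (since both sides already lie there), rather than that ``being a square'' descends.
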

\begin{proof}
As $D$ is a universal denominator, $D^2$ is also a universal denominator, so there exists $A\in\an(\widehat{Y})$ such that $(D\circ\pi)^2F=A\circ\pi$. As $D,F$ are invariant holomorphic functions and $\sigma\circ\pi=\pi\circ\widehat{\sigma}$, $A$ is also invariant. Pick $x\in X$ and write $\pi^{-1}(x):=\{y_1,\ldots,y_r\}$. Let $W_i\subset\widehat{Y}$ be a neighborhood of $y_i$ such that $W_i\cap W_j=\varnothing$ if $i\neq j$ and there exists $B_i\in\an(W_i)$ such that $B_i^2=F|_{W_i}$ for $i=1,\ldots,r$. We may assume $W:=\bigcup_{i=1}^rW_i$ is invariant and define
$$
B:W\to\C,\ y\mapsto B_i(y)\ \text{ if $y\in W_i$,}
$$
which satisfies $B^2=F|_W$. As $\pi^{-1}(x)\subset W$ and $\pi$ is proper, we may assume $V:=\pi(W)$ is an open subset of $Y$ and $\pi^{-1}(V)=W$. As $D$ is a universal denominator, there exists $B'\in\an(V)$ such that $(D|_V\circ\pi)B=B'\circ\pi$, so
$$
(B'\circ\pi)^2=((D|_V\circ\pi)B)^2=(D|_V\circ\pi)^2F|_W=A\circ\pi.
$$
As $\pi^*:\an(V)\to\an(W),\ G\mapsto G\circ\pi$ is injective, we conclude $A|_V=B'^2$, so $A_x$ is locally a square of $\an_{Y,x}$, as required.
\end{proof}

\subsection{Proof of Theorem \ref{h17}}
We distinguish two cases:

\noindent{\sc General case:} By \S\ref{complexification} there exists a reduced Stein space $(Y,\an_Y)$ endowed with an anti-involution $\sigma$ such that $(X,\an_X)$ is the real part space of $(Y,\an_Y)$ and $(Y,\an_Y)$ is a complexification of $(X,\an_X)$. By \S\ref{norm} there exists a normalization $((\widehat{Y},\an_{\widehat{Y}}),\pi)$ of $(Y,\an_Y)$ and an anti-involution $\widehat{\sigma}$ of $\widehat{Y}$ such that $\sigma\circ\pi=\pi\circ\widehat{\sigma}$. By \cite{n3} $(\widehat{Y},\an_{\widehat{Y}})$ is a Stein space. Let 
$$
\widehat{X}=\{z\in Y:\ \widehat{\sigma}(z)=z\}.
$$ 
By Corollary \ref{dim12} there exists a ($\widehat{\sigma}$-)invariant analytic curve $C\subset\widehat{Y}$ of real dimension $1$ without isolated points such that $D_0:=C\cap\widehat{X}$ is a discrete subset of $\widehat{X}$ and $\pi^{-1}(X)=\widehat{X}\cup C$. As $X$ has a system of open Stein neighborhoods in $Y$, by \cite[M.Thm.3]{gu} also $\widehat{X}\cup C$ has a system of open Stein neighborhoods in $\widehat{Y}$. As the irreducible components of $\widehat{Y}$ are its connected components, we may assume (after shrinking $\widehat{Y}$ if necessary) that both $\widehat{X}\cup C$ and $\widehat{Y}$ are connected \cite[Thm.1.2, Prop.5.16]{fe12}.

Let $f:X\to\R$ be a non-zero positive semidefinite analytic function. Shrinking $Y$ if necessary, we may assume that $f$ extends to an invariant holomorphic function $F:Y\to\C$. Consider the holomorphic function $F':=F\circ\pi:\widehat{Y}\to\C$. As $\sigma\circ\pi=\pi\circ\widehat{\sigma}$, we have that $F':=F\circ\pi$ is an invariant holomorphic function whose restriction to $\widehat{X}$ is positive semidefinite. Observe that $\{F'=0\}$ has dimension $1$.

By Lemma \ref{dim1} there are invariant holomorphic functions $G_1,F_1,F_2\in\an(\widehat{Y})$ such that $\{F_i=0\}\cap\widehat{X}$ is a discrete set contained in $\{F'=0\}\cap\widehat{X}$, $F_i|_{\widehat{X}}\geq0$ and $F_1^2F'=(G_1^2+F'^2)F_2$. By Theorem \ref{movc} there exists, after shrinking $\widehat{Y}$ if necessary, an invariant holomorphic function $G_2\in\an(\widehat{Y})$ such that the restriction of $F_2':=F_2-G_2^2$ to $\widehat{X}$ is positive semidefinite, $\{F_2'=0\}\cap(\widehat{X}\cup C)$ is a discrete set and $\{F_2'=0\}\cap \widehat{X}=\{F_2=0\}\cap\widehat{X}$. By Theorem \ref{maintool} there exist, after shrinking $\widehat{Y}$ if necessary, invariant holomorphic functions $F_2'',H_1,H_2\in\an(\widehat{Y})$ such that:
\begin{itemize}
\item[(i)] $H_1,H_2\in\Sos_4\an^\sigma(\widehat{Y})^2$.
\item[(ii)] $\{F_2''=0\}\cap C\subset\widehat{X}\cap C$ and $\{H_\ell=0\}\cap\widehat{X}=\varnothing$ for $\ell=1,2$.
\item[(iii)] $F_2'H_1=F_2''H_2$ and $F_{2,x}''\an_{\widehat{Y},x}=F_{2,x}'\an_{\widehat{Y},x}$ for each $x\in\widehat{X}$. In particular, $\{F_2'=0\}\cap\widehat{X}=\{F_2''=0\}\cap\widehat{X}$.
\end{itemize}
By Theorem \ref{discretezero-set} there are, after shrinking $\widehat{Y}$ if necessary, invariant holomorphic functions $H_3,G_3\in\an(\widehat{Y})$ such that $\{H_3=0\}=\varnothing$, ($H_3|_{\widehat{X}}$ is strictly positive), 
$$
\{G_3=0\}\cap\widehat{X}\subset\{F_2''=0\}\cap\widehat{X}=\{F_2'=0\}\cap\widehat{X}=\{F_2=0\}\cap\widehat{X}\subset\{F'=0\}\cap\widehat{X}
$$ 
is a discrete set and $G_3^2H_3F_2''\in\Sos_8\an^\sigma(\widehat{Y})^2$.

Let $D\in\an(Y)$ be an optimal universal denominator. By Corollary \ref{lsq} we find an invariant holomorphic function $A\in\an(Y)$ such that $(D\circ\pi)^2H_3=A\circ\pi$ and $A_x$ is a square in $\an_{Y,x}$ for each $x\in X$. By Lemma \ref{lsr} we have, after shrinking $Y$ if necessary, invariant holomorphic functions $A_1,A_2,A_3\in\an(Y)$ such that $A=A_1^2+A_2^2+A_3^2$. Thus, if we write $D':=D\circ\pi$,
$$
D'^2H_3=(A_1\circ\pi)^2+(A_2\circ\pi)^2+(A_3\circ\pi)^2\in\Sos_3\an^\sigma(\widehat{Y})^2.
$$
Consequently, $(G_3D'H_3)^2F_2''\in\Sos_8\an^\sigma(\widehat{Y})^2$. We deduce
$$
(G_3D'H_3H_1)^2(F_2-G_2^2)=(G_3D'H_3H_1)^2F_2'=(G_3D'H_3)^2H_1H_2F_2''\in\Sos_8\an^\sigma(\widehat{Y})^2,
$$
so $(G_3D'H_3H_1)^2F_2\in\Sos_9\an^\sigma(\widehat{Y})^2$. Thus,
$$
(F_1G_3D'H_3H_1)^2F'=(G_1^2+F'^2)(G_3D'H_3H_1)^2F_2\in\Sos_{10}\an^\sigma(\widehat{Y})^2.
$$
As $\{F_1G_3H_3H_1=0\}\cap\widehat{X}\subset\{F'=0\}\cap\widehat{X}$ is a discrete set, $F_1G_3H_3H_1$ is a non-zero divisor of $\an(\widehat{Y})$. Thus, using that $D'$ is an optimal universal denominator, we find $E\in\an(Y)$ such that $D'(F_1G_3H_3H_1)=E\circ\pi$, $E$ is not a zero divisor and $\{E=0\}\cap X\subset B(X)\cup(\{F=0\}\cap X)$. As $D'^2\Sos_{10}\an^\sigma(\widehat{Y})^2\subset\pi^*(\Sos_{10}(\an^\sigma(Y))^2)$, we deduce that $E^2F\in\Sos_{10}\an^\sigma(Y)^2$ and $\{E=0\}\cap X\subset B(X)\cup(\{F=0\}\cap X)$. This concludes the general case.

\noindent{\sc Coherent case:} Let us modify the previous proof when $X$ is coherent. If such is the case, $\pi^{-1}(X)=\widehat{X}$ and $C=\varnothing$ (Remark \ref{casd2}(ii)). Thus, we can work directly on $\widehat{X}$ and do not need to care about $\widehat{Y}$. This simplifies everything and only $5$ squares will be enough. Denote $\rho:=\pi|_{\widehat{X}}:\widehat{X}\to X$. As the irreducible components of $\widehat{X}$ are its connected components, we assume $\widehat{X}$ is connected.

Let $f\in\an(X)$ be a non-zero positive definite analytic function and $f':=f\circ\rho$. Observe that $\{f'=0\}$ has dimension $\leq1$. By Lemma \ref{dim1} there exist analytic functions $g_1,f_1,f_2\in\an(\widehat{X})$ such that each $f_i$ is positive semidefinite, has a discrete zero-set contained in $\{f'=0\}$ and $f_1^2f'=(g_1^2+f'^2)f_2$. By {\sc Step 2} of the proof of Theorem \ref{discretezero-set} there exist analytic functions $g_2,a_1,a_2,a_3,a_4\in\an(\widehat{X})$ such that $\{g_2=0\}\subset\{f_2=0\}$ and $g_2^2f_2\an_{\widehat{X},x}=(a_1^2+a_2^2+a_3^2+a_4^2)\an_{\widehat{X},x}$ for each $x\in\{f_2=0\}$. We still need to modify the analytic functions $a_i$. Let $\{Z_i\}_{i\geq1}$ be the irreducible components of dimension $1$ of $\{f'=0\}$ and pick $z_i\in Z_i\setminus\{g_2^2f_2=0\}$ for each $i\geq1$. Let $\lambda\in(0,+\infty)$ be such that $a_1':=a_1+\lambda g_2^2f_2$ does not vanish at any of the points $z_i$. The set $\{f'=0,a_1'=0\}$ is discrete, so $D:=\{f'=0,a_1'=0,a_2\neq0\}$ is also discrete. Let $c\in\an(\widehat{X})$ be an analytic function such that $\{c=0\}=D$ and define $a_2':=a_2+cg_2^2f_2$, $a_3':=a_3$ and $a_4':=a_4$. Observe that $g_2^2f_2\an_{\widehat{X},x}=(a_1'^2+a_2'^2+a_3'^2+a_4'^2)\an_{\widehat{X},x}$ for each $x\in\{f_2=0\}$ and 
\begin{equation*}
\begin{split}
\{a_1'=0,a_2+cg_2^2f_2&=0,f'=0,a_2\neq0\}\\
&=\{a_1'=0,f'=0,a_2\neq0\}\cap\{a_2+cg_2^2f_2=0\}\\
&=\{c=0\}\cap\{a_2+cg_2^2f_2=0\}=\{a_2=0,c=0\}=\varnothing.
\end{split}
\end{equation*}
Consequently, as $\{c=0,a_2=0\}=\varnothing$, we obtain
\begin{equation*}
\begin{split}
\{a_1'^2+a_2'^2+a_3'^2+a_4'^2=0\}&\cap\{f'=0,f_2\neq0\}
\subset\{a_1'=0,a_2'=0,f'=0,f_2\neq0\}\\
&=\{a_1'=0,a_2+cg_2^2f_2=0,f'=0,f_2\neq0\}\\
&=\{a_1'=0,a_2+cg_2^2f_2=0,f'=0,f_2\neq0,a_2\neq0\}
=\varnothing.
\end{split}
\end{equation*}
This means that $g_2^2f_2\an_{\widehat{X},x}=(a_1'^2+a_2'^2+a_3'^2+a_4'^2)\an_{\widehat{X},x}$ for each $x\in\{f'=0\}$, so
$$
(g_2f_1)^2f'\an_{\widehat{X},x}=(g_1^2+f'^2)g_2^2f_2\an_{\widehat{X},x}=(g_1^2+f'^2)(a_1'^2+a_2'^2+a_3'^2+a_4'^2)\an_{\widehat{X},x}
$$
for each $x\in\{f'=0\}$. Let $a_1'',a_2'',a_3'',a_4''\in\an(\widehat{X})$ be such that 
$$
(g_1^2+f'^2)(a_1^2+a_2^2+a_3^2+a_4^2)=a_1''^2+a_2''^2+a_3''^2+a_4''^2.
$$
As $\{g_2f_1=0\}\subset\{f'=0\}$,
$$
u:=\frac{(g_2f_1)^2f'}{a_1''^2+a_2''^2+a_3''^2+a_4''^2+((g_2f_1)^2f')^2}\in\an(\widehat{X})
$$
is a strictly positive analytic function. Let $v\in\an(\widehat{X})$ be a strictly positive analytic function such that $u=v^2$. Thus,
$$
(g_2f_1)^2f'=(a_1''v)^2+(a_2''v)^2+(a_3''v)^2+(a_4''v)^2+((g_2f_1)^2f'v)^2.
$$
Let $d\in\an(X)$ be an optimal universal denominator, which satisfies $\{d=0\}=B(X)$. Let $b_0,b_1,b_2,b_3,b_4,b_5\in\an(X)$ be such that 
$$
b_k\circ\rho=\begin{cases}
(d\circ\pi)g_2f_1&\text{if $k=0$,}\\
(d\circ\pi)a_k''v&\text{if $k=1,2,3,4$,}\\
(d\circ\pi)(g_2f_1)^2f'v&\text{if $k=5$.}
\end{cases}
$$
We obtain $b_0^2f=\sum_{k=1}^5b_k^2$ and $\{b_0=0\}\subset\{f=0\}\cup\{d=0\}=\{f=0\}\cup B(X)$, as required.
\qed

\renewcommand\refname{\large References}

\end{document}